\font\smallit=cmti10  
\renewcommand\section{\@startsection {section}{1}{\z@}
{-30pt \@plus -1ex \@minus -.2ex} {2.3ex \@plus.2ex}
{\normalfont\normalsize\bfseries}}
\renewcommand\subsection{\@startsection{subsection}{2}{\z@}
{-3.25ex\@plus -1ex \@minus -.2ex} {1.5ex \@plus .2ex}
{\normalfont\normalsize\bfseries}}
\renewcommand{\@seccntformat}[1]{\csname the#1\endcsname. }
\newtheorem{thm}{Theorem}[section]
\newtheorem{lem}[thm]{Lemma}
\newtheorem{cor}[thm]{Corollary}
\theoremstyle{definition}
\newtheorem{defn}[thm]{Definition}
\theoremstyle{remark}
\newtheorem{remark}[thm]{Remark}
\theoremstyle{example}
\numberwithin{equation}{section}
\newcommand{\N}{{\mathbb N}}
\newcommand{\R}{{\mathbb R}}
\newcommand{\Q}{{\mathbb Q}}
\newcommand{\Z}{{\mathbb Z}}
\begin{document}

\begin{center}
\uppercase{\bf New Classes of Infinite Image Partition Regular Matrices Near Zero} \vskip 20pt
{\bf Ram Chandra Manna\footnote{The author was supported by Swami Vivekananda Merit Cum Means Scholarship.}}\\
{\smallit Department of Mathematics, Ramakrishna Mission
Vidyamandira, Belur Math, Howrah, West bengal, India}\\
{\tt mannaramchandra8@gmail.com}
\\ {\bf Sourav Kanti Patra\footnote{The author was supported by UGC fellowship.}}\\
{\smallit Department of Mathematics, Ramakrishna Mission
Vidyamandira, Belur Math, Howrah, West bengal, India}\\
{\tt souravkantipatra@gmail.com}
\\ {\bf Rajib Sarkar\footnote{The author was supported by UGC fellowship.}}\\
{\smallit Department of Mathematics, Indian Institute of Technology Madras, Chennai, India}\\
{\tt rajib.sarkar63@gmail.com}
\vskip 10pt
\end{center}
\vskip 30pt

\centerline{\bf Abstract}

\noindent Image partition regular matrices near zero generalizes many classical results of Ramsey Theory. There are several characterizations of finite image partition regular matrices near zero. Contrast to the finite cases there are only few classes of matrices that are known to be infinite image partition regular near zero. In this present work we have produced several new examples of such classes.

\pagestyle{myheadings}
\thispagestyle{empty}
\baselineskip=12.875pt \vskip 30pt

\section{Introduction}

In this paper we shall be concerned with finite or infinite matrices with rational entries.
\begin{defn}\label{1.1}
A matrix (finite or infinite) is admissible if and only if it has entries from $\mathbb{Q}$, no row equal to $\vec{0}$  and finitely many nonzero entries in each row.
\end{defn}
Here the rows and columns of a matrix $A$ are indexed by ordinals (always countable). The first infinite ordinal is $\omega =\mathbb{N}\cup \{0 \}$. Recall that an ordinal is the set of its predecessors, so for an ordinal $\alpha$, the statements $x < \alpha$ and $x\in \alpha$ are synonymous. If $A$ is an $\alpha \times \delta$ matrix, $B$ is a $\gamma \times \tau$ matrix, then $\left(\begin{matrix} A & 0 \\ 0 & B \end{matrix}\right)$ is an $(\alpha +\gamma)\times (\delta +\tau)$ matrix where $0$ represents a matrix with all entries equal to 0 of the appropriate size.  $\left(\begin{matrix} A & 0 \\ 0 & B \end{matrix}\right)$ is called the diagonal sum of $A$ and $B$.
\begin{defn}\label{1.2} (see \cite[Definition 1.2]{RefDN08})
Let $S$ be a subsemigroup of ($\mathbb{R}$,+) and let $\alpha$, $\delta$ be positive ordinals. An $\alpha \times \delta$ matrix $A$ is image partition regular over $S$ if and only if $A$ is admissible and whenever $S\setminus \{0 \}$ is finitely colored, there is some $\vec{x}\in S^{\delta}$ such that the entries of $A\vec{x}$ are monochromatic.
\end{defn}
Image partition regular matrices generalize many classical results of Ramsey Theory. For example, Schur's Theorem \cite{RefS} and the Van der Waerden's Theorem \cite{RefW} are equivalent to say that the matrices $\left(\begin{matrix} 1 & 0 \\ 0 & 1\\ 1 & 1
\end{matrix}\right)$ and for each $n\in \mathbb N$,
$\left(\begin{matrix} 1 & 0 \\ 1 & 1
\\\vdots & \vdots \\ 1 & n-1 \end{matrix}\right)$ are image partition regular over $\mathbb{N}$. We call any finite or infinite matrix $A$ to be image partition regular if and only if it is image partition regular over $\mathbb{N}.$ There are several characterizations of finite image partition regular matrices. One of these characterizations involves the notion of central sets. Central sets were introduced by Furstenberg and defined in terms of notion of topological dynamics. A nice characterization of central sets in terms of algebraic structure of $\beta\mathbb{N}$, the Stone-$\breve{C}$ech compactification of $\mathbb{N}$ is given in Definition \ref{1.4} . Central sets are very rich in combinatorial properties. The basic fact about central sets from Central Sets Theorem that we need to know is given by Furstenberg \cite[Proposition 8.21]{RefF}.
\begin{thm}\label{1.3}
(Central Sets Theorem) Let (S,+) be a commutative semigroup. Let
$\tau$ be the set of sequences $\langle y_t\rangle _{t=1}^\infty$ in $S$. Let $C$ be a subset of $S$ which is central and let $F\in\mathcal P_{f}(\tau)$. Then there exists a sequence 
$\langle a_t\rangle _{t=1}^\infty$
 in $S$ and a sequence
  $\langle H_t\rangle _{t=1}^\infty$ in 
  $\mathcal P_{f}(\mathbb N)$ such that for each 
  $n\in\mathbb N$, $\max H_n < \min H_{n+1}$ and for each $L\in\mathcal P_{f}(\mathbb N)$ and each 
  $f\in F$, $ \displaystyle \sum_{n\in
L}\Big(a_n +\displaystyle \sum_{t\in H_n} f(t) \Big)\in C$.
\end{thm}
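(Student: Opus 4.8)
The plan is to translate centrality into the algebra of $\beta S$ and to build the two sequences by a single induction. Since $C$ is central, fix a minimal idempotent $p$ of $\beta S$ with $C\in p$ (the algebraic characterization recorded in Definition \ref{1.4}). The one tool I would use repeatedly is the standard idempotent lemma: if $A\in p$ and $A^{\star}=\{x\in A:-x+A\in p\}$, where $-x+A=\{y\in S:x+y\in A\}$, then $A^{\star}\in p$ and $-x+A^{\star}\in p$ for every $x\in A^{\star}$. The useful reformulation is that, writing $K=\bigcup_{n\in L}H_n$, the target collapses to $\sum_{n\in L}\big(a_n+\sum_{t\in H_n}f(t)\big)=\sum_{n\in L}a_n+\sum_{t\in K}f(t)$, so it suffices to keep every such combined sum inside $C^{\star}$.

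I would fix the inductive hypothesis that after stage $n$ we have $a_1,\dots,a_n\in S$ and blocks $H_1,\dots,H_n$ with $\max H_m<\min H_{m+1}$ such that for every nonempty $L\subseteq\{1,\dots,n\}$ and every $f\in F$ the sum $\sum_{m\in L}a_m+\sum_{t\in\bigcup_{m\in L}H_m}f(t)$ lies in $C^{\star}$. In the step from $n$ to $n+1$ the subsets $L$ avoiding $n+1$ are handled by the hypothesis, while for $L=L'\cup\{n+1\}$ I would isolate the old part $\beta_{L',f}\in C^{\star}$ and, by the idempotent lemma, reduce the requirement to $g_f\in M_f$, where $g_f=a_{n+1}+\sum_{t\in H_{n+1}}f(t)$ and $M_f=C^{\star}\cap\bigcap_{\emptyset\neq L'}(-\beta_{L',f}+C^{\star})\in p$. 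Thus the whole induction reduces to one extension claim: given finitely many sets $M_f\in p$ $(f\in F)$ and a bound $N$, produce a single $a\in S$ and a single block $H$ with $\min H>N$ such that $a+\sum_{t\in H}f(t)\in M_f$ for every $f\in F$.

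This extension claim is where the difficulty concentrates, for two reasons: the values $\sum_{t\in H}f(t)$ are dictated by arbitrary sequences over which $p$ has no direct control, and $S$ is only a semigroup, so one cannot subtract to solve for $a$. To overcome this I would pass to the product semigroup $S^{F}$ and work with two idempotents at once. On one side, the diagonal map $d(a)=(a)_{f\in F}$ carries $p$ to an idempotent $\tilde d(p)$ that concentrates on the diagonal and contains $\prod_{f}M_f$, which is exactly what forces the corrector $a$ to be the same in every coordinate. On the other side, I would take an idempotent $r$ on the partial semigroup $(\mathcal P_f(\mathbb N),\uplus)$ of increasing finite blocks; since $H\mapsto(\sum_{t\in H}f(t))_{f\in F}$ is a partial homomorphism, its pushforward is an idempotent on $S^{F}$ that does control the block-sums, with witnessing blocks taken to satisfy $\min H>N$.

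The main obstacle, then, is to fuse these two idempotents into a single one that simultaneously sees the central set (through the diagonal image of $p$) and controls the block-sums of the given sequences (through the pushforward of $r$); once such an idempotent is in hand, one first selects the diagonal corrector $a$ and then, by block-sum control, the block $H$, which closes the induction and automatically yields $\max H_n<\min H_{n+1}$. I expect this fusion to be the crux and the precise point where the minimality of $p$ is genuinely needed. As a fallback I would note that this is exactly Furstenberg's theorem, whose original proof is dynamical: realize $C$ through a minimal idempotent as a set of return times in a minimal system, form the product system indexed by $F$, and extract $\langle a_n\rangle$ and $\langle H_n\rangle$ from simultaneous recurrence to a minimal point.
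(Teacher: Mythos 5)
The paper does not actually prove Theorem \ref{1.3}: it is quoted as a known result and attributed to Furstenberg \cite[Proposition 8.21]{RefF}, so your ``fallback'' of invoking the dynamical proof by citation is literally what the paper does. Judged as a self-contained argument, though, your proposal has a genuine gap. The outer layer is correct and standard: fix a minimal idempotent $p$ with $C\in p$, run the induction with the invariant that all partial sums lie in $C^{\star}$, and use the star-lemma to reduce the step from $n$ to $n+1$ to the single extension claim (given finitely many $M_f\in p$ and a bound $N$, find one $a$ and one block $H$ with $\min H>N$ such that $a+\sum_{t\in H}f(t)\in M_f$ for all $f\in F$; since $F$ is finite you may as well replace the $M_f$ by their intersection $A\in p$). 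But that extension claim is where the entire content of the theorem lives, and you leave it unproved: the proposed ``fusion'' of the diagonal idempotent $\tilde d(p)$ with the pushforward of an idempotent on $(\mathcal P_f(\N),\uplus)$ is only described as a hope. As stated it does not obviously work --- the sum of two idempotents need not be idempotent, the pushforward of $r$ carries no information about $A$, and nothing in the sketch indicates the mechanism by which minimality of $p$ enters.

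For the record, the standard algebraic argument closes exactly this gap without any fusion of idempotents. Write $F=\{f_1,\dots,f_k\}$ and work in $Y=(\beta S)^{k}$. For each $m$ let $I_m=\{(a+\sum_{t\in H}f_1(t),\dots,a+\sum_{t\in H}f_k(t)):a\in S,\ H\in\mathcal P_f(\N),\ \min H>m\}$ --- note the corrector $a$ is built into every coordinate from the outset, which is what makes the collection multiplicatively coherent --- and let $E_m=\{(a,\dots,a):a\in S\}\cup I_m$. Commutativity and the usual two-step continuity argument show that $\tilde E=\bigcap_m c\ell_Y(E_m)$ is a compact subsemigroup of $Y$ and that $\tilde I=\bigcap_m c\ell_Y(I_m)$ is a nonempty two-sided ideal of $\tilde E$, so $K(\tilde E)\subseteq\tilde I$. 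Since the smallest ideal of a finite product is the product of the smallest ideals, $\bar p=(p,\dots,p)$ is a minimal idempotent of $Y$, and it lies in $\tilde E$ because it is in the closure of the diagonal; by \cite[Theorem 1.65]{RefHS98}, $K(\tilde E)=\tilde E\cap K(Y)\ni\bar p$, hence $\bar p\in\tilde I\subseteq c\ell_Y(I_m)$ for every $m$. The basic neighborhood $\overline A\times\cdots\times\overline A$ of $\bar p$ therefore meets $I_m$, which is precisely the extension claim. This is the one place minimality of $p$ is used, so your instinct about where the crux sits was right; in your write-up, however, that crux is identified rather than proved.
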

We shall present this algebraic characterization of central sets below, after introducing the necessary background information. \\
     Given a discrete space $S$, we take the Stone-$\breve{C}$ech compactification $\beta S$ of $S$ to be the set of ultrafilters on $S$, the principal ultrafilters being identified with the points of $S$. Given $A\subseteq S$, let $\bar{A}=\{p\in\beta S:A\in p\}$. Then $\{\bar{A}:A\subseteq S\}$ forms a basis for open sets as well as closed sets of $\beta S$. If $(S,\cdot)$ is discrete space, the operation extends to $\beta S$ so that $(\beta S,\cdot)$ becomes a compact right topological 
semigroup with $S$ contained in its topological center. That is, for any $p\in \beta S$, the function $q\mapsto q\cdot p$ from $\beta S$ to itself is continuous and for any
$x \in S$, the function $q\mapsto x\cdot q$ from $\beta S$ to itself is continuous. Given $p,q\in\beta S$ and $A\subseteq S, A\in p\cdot q$ if and only if $\{x\in
S:x^{-1}A\in q\}\in p$, where $x^{-1}A=\{y\in S:x\cdot y\in A\}$.\\
A nonempty subset $I$ of a semigroup $(T,\cdot)$ is called a left ideal of $T$ if 
$T\cdot I\subseteq I$, a right ideal if $I\cdot
T\subseteq I$, and a two-sided ideal (or simply an ideal) if it is both a left and a right ideal. A minimal left ideal is a left
ideal that does not contain any proper left ideal. Similarly, we
can define minimal right ideal. Any compact Hausdorff right topological semigroup ($T, \cdot)$ has a smallest ideal $K(T)$ which is the union of all minimal left ideals of $T$ and is also union of all minimal right ideals of $T$. An idempotent belonging to $K(T)$ is called a minimal idempotent. Given $J\subseteq T$, we shall use the notation $E(J)$ to denote the set of all idempotents in $J$.
\begin{defn}\label{1.4} \cite[Definition 4.42]{RefHS18}
Let $S$ be a discrete semigroup and let $A\subseteq S$. Then $A$ is central in $S$ if and only if $A$ is a member of some idempotent in $\beta S$. 
\end{defn}
We now define the notion of image partition regularity near zero \cite[Definition 1.3]{RefDN08} and \cite[Definition 3.1]{RefDN08} .
\begin{defn}\label{1.5}
Let $S$ be a subsemigroup of ($\mathbb{R}$,+) with $0\in cl \ S$. Let $u,v \in \mathbb{N}$ and let $A$ be a $u \times v$ matrix with entries from $\mathbb{Q}$. Then $A$ is image partition regular over $S$ near zero if and only if, whenever $S \setminus \{0\}$ is finitely colored and $\delta >
 0$, there exists $\vec{x} \in S^{u}$ such that the entries of $A\vec{x}$ are monochromatic and lie in the interval $(-\delta, \delta).$ 
\end{defn}
In \cite{RefPG17}, the authors produced a class of infinite image partition regular matrices which are compatible with Milliken-Taylor matrices ( a class of infinite image partition regular matrices, defined in Definition \ref{3.7}(a)) with respect to diagonal sum. Recently in \cite{RefHS18} , several new examples of infinite partition regular matrices were obtained by Hindman and Strauss. In Section 3, using these concepts we produce several new examples of infinite image partition regular matrices near zero. In Section 2, we develope some tools for Section 3.
\section{Ultrafilters near zero induced by matrices}
Hindman and Leader first introduced the notion of ultrafilters near zero and studied many classical Ramsey Theoritic properties near zero in \cite{RefHL99}.
 We now recall \cite[Definition 4.2]{RefDN08}.
\begin{defn}\label{2.1}
Let $S$ be a dense subsemigroup of ($\mathbb{R}$,+) or of $((0,\infty),+).$ Then $O^{+}(S)=\{p\in \beta S: (\forall  \epsilon > 0) ((0, \epsilon) \cap S \in p)\}.$ If $S$ is a dense subsemigroup of ($\mathbb{R}$,+), then $O^{-}(S)=\{p\in \beta S: (\forall \epsilon > 0) ((-\epsilon,0) \cap S \in p)\}.$
\end{defn}
It was shown in \cite[Lemma 2.5]{RefHL99} that $O^{+}(S)\cap K(\beta S)=\emptyset$, so we can not obtain any information about $K(O^{+}(S))$ based on knowledge of $K(\beta S)$. Thus one can define different notions of large sets near zero using $O^{+}(S).$
\begin{defn}\label{2.2} \cite[Definition 4.1(a)]{RefHL99}
Let $S$ be a dense subsemigroup of $((0,\infty),+).$ A set $C$ is central near zero if and only if there is an idempotent $p\in \bar{C}\cap K(O^{+}(S)).$ 
\end{defn}
We now define the notion of thick sets near zero.
\begin{defn}\label{2.3}
Let $S$ be a dense subsemigroup of $((0,\infty),+).$ A set $C$ is thick near zero if and only if there is a left ideal $L$ of $O^{+}(S)$ such that $L\subseteq \bar{C}.$
\end{defn}
We shall now concentrate on an interesting subset of $O^{+}(S)$, induced by admissible matrices.
\begin{defn}\label{2.4}
Let $S$ be a subsemigroup of $((0,\infty),+)$ and let $A$ be a finite or infinite matrix with entries from $\mathbb{Q}.$ Then
$ I(A;S)=\{p\in \beta S: \text{for every} \ P \in p, \ \text{there exists} \ \vec{x} \ \text{with entries from} \ S \ \text{such that all entries of} \ A\vec{x} \ \text{are} \ \text{in} \  P \}.  $
\end{defn}
In this article we use the notation $I(A)$ to denote $I(A;\N)$.
\begin{lem}\label{2.5}
Let $S$ be a subsemigroup of $((0,\infty),+)$. Let $u,v\in \mathbb{N}\cup \{\omega \}$ and let $A$ be a $u\times v$ matrix with entries from $\mathbb{Q}$.
\\ (a) The set $I(A;S)$ is compact and $I(A;S)\neq \emptyset$ if and only if $A$ is image partition regular over $S$.
\\ (b) If $A$ is finite image partition regular over $S$, then $I(A)$ is a subsemigroup of $(\beta S,+)$.
\end{lem}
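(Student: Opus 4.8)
The plan is to reduce both parts to a single combinatorial property of subsets of $S$. Call $P\subseteq S$ \emph{attainable} if there is some $\vec{x}\in S^{v}$ all of whose entries of $A\vec{x}$ lie in $P$, and \emph{unattainable} otherwise. Two observations are immediate and will drive everything: attainability is preserved under supersets (the same $\vec{x}$ works), hence unattainability is preserved under subsets; and by Definition \ref{2.4}, $p\in I(A;S)$ precisely when every member of $p$ is attainable, i.e.\ when $p$ contains no unattainable set. Since $P\notin p\iff S\setminus P\in p$, this rewrites as $I(A;S)=\bigcap\{\overline{S\setminus P}:P\text{ unattainable}\}$, an intersection of basic closed subsets of $\beta S$.

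For part (a), compactness is then immediate: $I(A;S)$ is an intersection of closed sets in the compact space $\beta S$, hence closed, hence compact. For the equivalence I would use the finite intersection property together with the identity $\overline{C}\cap\overline{D}=\overline{C\cap D}$, giving $\bigcap_{i=1}^{n}\overline{S\setminus P_i}=\overline{S\setminus\bigcup_{i=1}^{n}P_i}$, which is nonempty exactly when $\bigcup_{i=1}^{n}P_i\neq S$. Thus $I(A;S)\neq\emptyset$ iff no finite family of unattainable sets covers $S$; refining any finite cover to a partition and using downward closure of unattainability, this is equivalent to the statement that every finite partition of $S=S\setminus\{0\}$ has an attainable cell, which is exactly the coloring condition of Definition \ref{1.2}. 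Conversely, given $p\in I(A;S)$ and a finite coloring of $S\setminus\{0\}$, the unique color class belonging to $p$ is attainable and furnishes a monochromatic $\vec{x}$. (Admissibility is part of Definition \ref{1.2}; note a zero row would force every set unattainable, since $0\notin S$, correctly yielding $I(A;S)=\emptyset$.)

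For part (b), I would take $p,q\in I(A;S)$ and an arbitrary $P\in p+q$, and produce $\vec{x}\in S^{v}$ with every entry of $A\vec{x}$ in $P$. By definition of $p+q$ the set $Q=\{x\in S:-x+P\in q\}$ lies in $p$, so by $p\in I(A;S)$ it is attainable: fix $\vec{x}_0\in S^{v}$ whose $A\vec{x}_0$ has entries $r_1,\dots,r_u$ all in $Q$. Then $-r_j+P\in q$ for each $j$, and since $A$ is finite the set $P'=\bigcap_{j=1}^{u}(-r_j+P)$ is a \emph{finite} intersection of members of $q$, hence $P'\in q$. By $q\in I(A;S)$, $P'$ is attainable: fix $\vec{x}_1\in S^{v}$ with every entry of $A\vec{x}_1$ in $P'$. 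Putting $\vec{x}=\vec{x}_0+\vec{x}_1\in S^{v}$ and using linearity $A\vec{x}=A\vec{x}_0+A\vec{x}_1$, the $i$-th entry equals $r_i+(A\vec{x}_1)_i$, and $(A\vec{x}_1)_i\in P'\subseteq -r_i+P$ gives $r_i+(A\vec{x}_1)_i\in P$. So $P$ is attainable, whence $p+q\in I(A;S)$; thus $I(A;S)$ is a subsemigroup of $(\beta S,+)$, and in particular $I(A)=I(A;\N)$ is a subsemigroup of $\beta\N$.

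The step I expect to be decisive is $P'=\bigcap_{j=1}^{u}(-r_j+P)\in q$ in part (b): it works only because $u$ is finite, so that $P'$ is a finite intersection of sets in the ultrafilter $q$. This is exactly where finiteness of $A$ is used, and it is the reason the argument does not pass verbatim to infinite matrices. Everything else is soft — upward/downward closure, the filter identity $\overline{C}\cap\overline{D}=\overline{C\cap D}$, and compactness of $\beta S$ — so the genuine content of the lemma is this finite-intersection reduction together with the translation of image partition regularity into the finite intersection property for the family $\{\overline{S\setminus P}:P\text{ unattainable}\}$.
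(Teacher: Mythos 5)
Your proof is correct. The paper gives no argument of its own here — it simply cites Hindman--Leader--Strauss \cite[Lemma 2.5]{RefHLS03} — and your identification of $I(A;S)$ with the closed set $\bigcap\{\overline{S\setminus P}:P\ \text{unattainable}\}$ (giving compactness and, via the finite intersection property, the equivalence with image partition regularity), together with the computation $P'=\bigcap_{j=1}^{u}(-r_j+P)\in q$ in part (b), is exactly the standard argument behind that cited lemma, with the role of finiteness of $A$ correctly isolated.
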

\begin{proof}
See the proof of \cite[Lemma 2.5]{RefHLS03} .
\end{proof}
\begin{defn}\label{2.6}
Let $S$ be a dense subsemigroup of $((0,\infty),+)$ and let $A$ be a finite or infinite matrix with entries from $\mathbb{Q}$. Then
$ I_{0}(A;S)=\{p\in \beta S: \text{for every} \ P \in p \ \text{every} \ \epsilon>0 \ \text{there exists} \ \vec{x} \ \text{with entries from} \ S \ \text{such that all entries of} \ A\vec{x} \ \text{are} \ \text{in} \ P\cap (0,\epsilon) \}. $
\end{defn}
\begin{remark}\label{2.7}
Let $S$ be a dense subsemigroup of ($\mathbb{R},+)$ and let $A$ be a finite or infinite matrix with entries from $\mathbb{Q}$.  Then $I_{0}(A;S)=I(A;S)\cap O^{+}(S).$
\end{remark}
\begin{lem}\label{2.8}
Let $S$ be a dense subsemigroup of $((0,\infty),+)$. Let $u,v \in \mathbb{N}\cup \{\omega \}$ and let $A$ be a $u\times v$ matrix with entries from $\mathbb{Q}$.
\\ (a) The set $I_{0}(A;S)$ is compact and $I_{0}(A;S)\neq \emptyset$ if and only if $A$ is image partition regular near zero over $S$.
\\ (b) If $A$ is finite image partition regular near zero over $S$, then $I_{0}(A;S)$ is a subsemigroup of $\beta S$.
\end{lem}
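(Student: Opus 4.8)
The plan is to reduce both parts to the corresponding facts for $I(A;S)$ from Lemma~\ref{2.5}, using the decomposition $I_{0}(A;S)=I(A;S)\cap O^{+}(S)$ of Remark~\ref{2.7}, and to treat the nonemptiness half of (a) by a finite intersection property argument. For compactness in (a), note that $I(A;S)$ is compact by Lemma~\ref{2.5}(a), hence closed in the compact Hausdorff space $\beta S$, while $O^{+}(S)=\bigcap_{\epsilon>0}\overline{(0,\epsilon)\cap S}$ is an intersection of clopen sets and so is closed. Therefore $I_{0}(A;S)=I(A;S)\cap O^{+}(S)$ is closed, hence compact.

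For the equivalence in (a), call $P\subseteq S$ admissible near zero if for every $\epsilon>0$ there is $\vec{x}\in S^{v}$ with all entries of $A\vec{x}$ in $P\cap(0,\epsilon)$; by Definition~\ref{2.6}, $p\in I_{0}(A;S)$ exactly when every member of $p$ is admissible near zero. If $I_{0}(A;S)\neq\emptyset$, I would fix such a $p$; given a finite coloring $S=P_{1}\cup\cdots\cup P_{n}$ and $\delta>0$, the unique $P_{i}\in p$ is admissible near zero, producing $\vec{x}$ whose image $A\vec{x}$ is monochromatic and lies in $(0,\delta)\subseteq(-\delta,\delta)$, so $A$ is image partition regular near zero. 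For the converse I would verify that $\{\,S\setminus B: B\text{ is not admissible near zero}\,\}$ has the finite intersection property: if $B_{1},\dots,B_{k}$ are each not admissible near zero, pick witnesses $\epsilon_{i}>0$ such that no $\vec{x}$ puts all of $A\vec{x}$ in $B_{i}\cap(0,\epsilon_{i})$, refine to the coloring $Q_{i}=B_{i}\setminus\bigcup_{j<i}B_{j}\subseteq B_{i}$, and apply image partition regularity near zero with $\delta=\min_{i}\epsilon_{i}$; the resulting monochromatic image would lie in some $Q_{i}\cap(0,\delta)\subseteq B_{i}\cap(0,\epsilon_{i})$, a contradiction unless $\bigcup_{i}B_{i}\neq S$, i.e. $\bigcap_{i}(S\setminus B_{i})\neq\emptyset$. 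Any ultrafilter extending this family then contains no non-admissible set, so all its members are admissible near zero and it lies in $I_{0}(A;S)$.

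For (b), image partition regularity near zero implies ordinary image partition regularity over $S$ (apply the definition with any fixed $\delta$), so Lemma~\ref{2.5}(b) makes $I(A;S)$ a subsemigroup of $\beta S$; and $O^{+}(S)$ is a closed subsemigroup of $\beta S$ by \cite{RefHL99}. Since the intersection of two subsemigroups is closed under $+$ and $I_{0}(A;S)=I(A;S)\cap O^{+}(S)$ is nonempty by part (a), it is a subsemigroup of $\beta S$.

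The step I expect to require the most care is the converse in (a): image partition regularity near zero yields, for each $\epsilon$, a monochromatic image in $(0,\epsilon)$, but a priori the successful color may depend on $\epsilon$, whereas membership in $I_{0}(A;S)$ demands one fixed color class that works at every scale. Choosing $\delta=\min_{i}\epsilon_{i}$ over the finitely many classes is precisely the device that forces a single class to succeed at an arbitrarily small scale, and confirming that this passes correctly to the generated ultrafilter is the heart of the argument.
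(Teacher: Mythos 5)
Your proof is correct and follows essentially the same route as the paper, which simply derives the lemma from the decomposition $I_{0}(A;S)=I(A;S)\cap O^{+}(S)$ of Remark~\ref{2.7} together with Lemma~\ref{2.5}. The only difference is that you spell out in full the finite-intersection-property argument for the nonemptiness equivalence in (a) (which the paper leaves implicit, as it is the near-zero adaptation of the argument behind Lemma~\ref{2.5}(a)), and your handling of the potential $\epsilon$-dependence of the successful color class via $\delta=\min_{i}\epsilon_{i}$ is exactly the right device.
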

\begin{proof}
Proof of this lemma immediately follows from Remark \ref{2.7} and Lemma \ref{2.5}.
\end{proof}
For any subsemigroup $S$ of $((0,\infty),+)$, there is a natural action of $\mathbb{N}$ on $\beta S$.
\begin{defn}\label{2.9} (see \cite[Definition 1.1]{RefHS00(1)})
Let $(S,+)$ be a subsemigroup of ($\mathbb{R},+)$ and $n\in \mathbb{N}$. Define $l_{n}(s)=n\cdot s$ and let $\tilde{l_{n}}: \beta S\longrightarrow \beta S$ be the continuous extension of $l_{n}$. For $p\in \beta S$, define $n\cdot p=\tilde{l_{n}}(p).$
\end{defn}
\begin{lem}\label{2.10}
Let $(S,+)$ be a subsemigroup of $((0,\infty),+)$, let $a\in \mathbb{N}$, let $p\in \beta S$ and let $A\subseteq S$. Then $A\in a\cdot p$ if and only if $a^{-1}A\in p$ where $a^{-1}A=\{ x\in S: a\cdot x\in A\}.$ In particular, if $B\in p$, then $a\cdot B \in a\cdot p.$
\end{lem}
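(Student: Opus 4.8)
The plan is to reduce both claims to the defining property of the continuous extension $\tilde{l_a}$ together with the standard dictionary between clopen subsets of $\beta S$ and subsets of $S$. First I would recall from Definition \ref{2.9} that $a\cdot p=\tilde{l_a}(p)$, where $l_a(s)=a\cdot s$ and $\tilde{l_a}:\beta S\longrightarrow\beta S$ is its continuous extension; here $a\cdot s$ is ordinary multiplication since $a\in\mathbb{N}$. For $A\subseteq S$ the set $\bar A=\{q\in\beta S:A\in q\}$ is basic clopen and coincides with the closure of $A$ in $\beta S$, and $S$ is dense in $\beta S$.

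The key step is to identify the set $\tilde{l_a}^{-1}(\bar A)$. Since $\tilde{l_a}$ is continuous and $\bar A$ is clopen, $\tilde{l_a}^{-1}(\bar A)$ is clopen in $\beta S$. Evaluating at a principal ultrafilter $s\in S$ gives $\tilde{l_a}(s)=a\cdot s$, and $a\cdot s\in\bar A\iff a\cdot s\in A\iff s\in a^{-1}A$, so the trace of $\tilde{l_a}^{-1}(\bar A)$ on $S$ is exactly $a^{-1}A$. Because any clopen subset $C$ of $\beta S$ satisfies $C=\overline{C\cap S}$ (it is closed, hence contains $\overline{C\cap S}$, and open with $S$ dense, hence is contained in it), this forces $\tilde{l_a}^{-1}(\bar A)=\overline{a^{-1}A}$. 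Chaining the equivalences then yields
$$A\in a\cdot p\iff a\cdot p\in\bar A\iff p\in\tilde{l_a}^{-1}(\bar A)=\overline{a^{-1}A}\iff a^{-1}A\in p,$$
which is the first assertion. Equivalently, one could argue directly from the $p$-limit description $\tilde{l_a}(p)=p\text{-}\lim_{s\in S}(a\cdot s)$, obtaining the same membership criterion.

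For the ``in particular'' clause I would apply the equivalence just obtained with $A=a\cdot B$, so that the claim reduces to $a^{-1}(a\cdot B)\in p$, and it therefore suffices to check $a^{-1}(a\cdot B)=B$. One inclusion is immediate; for the other, if $a\cdot x\in a\cdot B$ then $a\cdot x=a\cdot b$ for some $b\in B$, whence $x=b\in B$ by injectivity of multiplication by $a$. Thus $B\in p$ gives $a^{-1}(a\cdot B)=B\in p$ and hence $a\cdot B\in a\cdot p$. The only point requiring genuine care is this equality $a^{-1}(a\cdot B)=B$, which relies on $l_a$ being injective on $((0,\infty),+)$—valid because $a\ge 1$—and on $B$ already being a subset of $S$; everything else is bookkeeping around the continuity of $\tilde{l_a}$.
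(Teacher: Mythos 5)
Your proof is correct. The paper itself gives no argument for this lemma---it simply cites \cite[Lemma 2.1]{RefHS00(1)}---and your derivation via the clopen sets $\bar A$ and the identity $\tilde{l_a}^{-1}(\bar A)=\overline{a^{-1}A}$ is exactly the standard argument behind that cited result (note only that $a\cdot s\in S$ for $s\in S$ because $S$ is an additive subsemigroup, so $a\cdot B\subseteq S$ and the equivalence may legitimately be applied with $A=a\cdot B$).
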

\begin{proof}
For the proof see \cite[Lemma 2.1]{RefHS00(1)}.
\end{proof}
There is also a natural action of $\beta \mathbb{N}$ on $\beta S$, denoted by $q\cdot p$ for $p\in \beta S$ and $q\in \beta \mathbb{N}$, which is given by $q\cdot p=q-lim_{n\in \mathbb{N}} (n\cdot p).$
Note that for a dense subsemigroup $S$ of $((0,\infty),+)$,  for all $n\in \mathbb{N}$, $q\in \beta \mathbb{N}$ and $p\in O^{+}(S)$, we have $n\cdot p\in O^{+}(S)$ and $q\cdot p\in O^{+}(S).$ As a consequence of previous lemma, we have the following:
\begin{lem}\label{2.11}
Let $S$ be a dense subsemigroup of $((0,\infty),+)$ and $A$ be a finite or infinite matrix with entries from $\mathbb{Q}.$
\\ (a) If $q\in \beta \mathbb{N}$ and $p\in I_{0}(A;S)$ then $q\cdot p\in I_{0}(A;S)$.
\\ (b) If $A$ is finite, $q\in I(A)$ and $p\in O^{+}(S)$, then $q\cdot p\in I_{0}(A;S)$.
\end{lem}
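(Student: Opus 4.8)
The plan is to reduce both parts to the scaling action defined just before the statement, combined with a single translation of the membership relation in $q\cdot p$. First I would record the observation that for $P\subseteq S$ we have $P\in q\cdot p$ if and only if $\{n\in\N: n^{-1}P\in p\}\in q$: this is just unwinding $q\cdot p=q\text{-}\lim_{n\in\N}(n\cdot p)$, since $P\in n\cdot p$ is equivalent to $n^{-1}P\in p$ by Lemma \ref{2.10}. I would also use the fact, stated before the lemma, that $q\cdot p\in O^{+}(S)$ whenever $q\in\beta\N$ and $p\in O^{+}(S)$; together with Remark \ref{2.7} this means that in each part it suffices to verify the defining near-zero property of $I_{0}(A;S)$ directly (and that verification in fact re-establishes membership in $O^{+}(S)$ as a byproduct).

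For part (a) I would fix $P\in q\cdot p$ and $\epsilon>0$. Since $\{n: n^{-1}P\in p\}\in q$ is nonempty, I choose a single $n\in\N$ with $n^{-1}P\in p$. Because $p\in I_{0}(A;S)$, applying its defining property to the member $n^{-1}P\in p$ with tolerance $\epsilon/n$ produces $\vec{x}$ with entries from $S$ such that every entry of $A\vec{x}$ lies in $(n^{-1}P)\cap(0,\epsilon/n)$. Scaling by $n$, the vector $n\vec{x}$ again has entries in $S$ (as $S$ is closed under addition), and each entry $y$ of $A\vec{x}$ satisfies $ny\in P$ and $0<ny<\epsilon$, so every entry of $A(n\vec{x})=n(A\vec{x})$ lies in $P\cap(0,\epsilon)$. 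Hence $q\cdot p\in I_{0}(A;S)$. The key point is that a single scalar $n$ suffices, so the argument is insensitive to the number of rows of $A$ and goes through for infinite $A$.

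For part (b) the difficulty is that we may no longer extract solutions at arbitrarily small scale from $p$, since we only assume $p\in O^{+}(S)$ rather than $p\in I_{0}(A;S)$; instead I would pull the smallness out of $O^{+}(S)$ and the combinatorial structure out of $q\in I(A)$. Fix $P\in q\cdot p$ and $\epsilon>0$ and set $D=\{n\in\N: n^{-1}P\in p\}\in q$. Since $q\in I(A)$ and $D\in q$, there is $\vec{y}$ with entries from $\N$ whose images $d_{1},\dots,d_{u}$ under $A$ all lie in $D$; here finiteness of $A$ guarantees that there are only finitely many $d_{i}$, all positive integers, with finite maximum $M$. Each $d_{i}^{-1}P\in p$, and $(0,\epsilon/M)\cap S\in p$ because $p\in O^{+}(S)$, so the finite intersection $Q=(0,\epsilon/M)\cap S\cap\bigcap_{i=1}^{u} d_{i}^{-1}P$ lies in $p$ and is in particular nonempty. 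Choosing any $s\in Q$, the vector $s\vec{y}$ has entries in $S$, and each entry $d_{i}$ of $A\vec{y}$ yields an entry $sd_{i}$ of $A(s\vec{y})$ with $sd_{i}\in P$ and $0<sd_{i}<(\epsilon/M)\,d_{i}\le\epsilon$. Thus all entries of $A(s\vec{y})$ lie in $P\cap(0,\epsilon)$, so $q\cdot p\in I_{0}(A;S)$.

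The step I expect to be the main obstacle is precisely this last one: unlike part (a), it requires a \emph{finite} intersection inside $p$ together with a uniform bound $M$ on the row-images in order to push the scaled entries below $\epsilon$. Both devices fail when $A$ has infinitely many rows, which explains why part (b) is restricted to finite $A$ while part (a) holds in general.
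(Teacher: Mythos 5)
Your proof is correct and follows essentially the same route as the paper's: in (a) both pull a single $n$ with $n^{-1}P\in p$ out of $q$ and scale a solution for $p$ by $n$, and in (b) both first use $q\in I(A)$ to land $A\vec{y}$ inside $\{n:n^{-1}P\in p\}$ and then pick one scalar from the finite intersection $\bigcap_{i}d_i^{-1}P\in p$. The only cosmetic difference is that you verify the $(0,\epsilon)$ condition by hand (via the tolerances $\epsilon/n$ and $\epsilon/M$), whereas the paper dispenses with it once and for all by noting $q\cdot p\in O^{+}(S)$ and invoking Remark \ref{2.7}.
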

\begin{proof}
Let $u,v\in \mathbb{N}\cup \{ \omega \}$ and $A$ be a $u\times v$ matrix. Note that for all $q\in \beta \mathbb{N}$  and $p\in O^{+}(S)$, $q\cdot p \in O^{+}(S)$, so it suffices to show that $q\cdot p\in I(A;S)$ for both of the cases.
\\ (a) Let $q\in \beta \mathbb{N}$ and $p\in I_{0}(A;S)$. Suppose $E\in q\cdot p$. Then $\{n\in \mathbb{N}:n^{-1}E\in p\}\in q$. Choose $a\in \mathbb{N}$ such that $a^{-1}E\in p$. Since $p\in I(A;S)$, take $\vec{x}\in S^{v}$ such that $A\vec{x}\in (a^{-1}E)^{v}$. Then $A(a\vec{x})\in E^{v}$, as required.
\\ (b) Let $u,v\in \mathbb{N}$ and $A$ be a $u\times v$ matrix, let $q\in I(A)$ and let $p\in O^{+}(S)$. Suppose $E\in q\cdot p$ and let $W=\{n\mathbb{N} :n^{-1}E\in p \}$. Then $W\in q$. Now choose $\vec{x}\in S^{v}$ such that $\vec{y}=A\vec{x}\in W^{u}$. Then $\bigcap_{i=1}^{u} y_{i}^{-1}E \in p$. Take $a\in \bigcap_{i=1}^{u} y_{i}^{-1}E$ and $\vec{z}=a\vec{x}$. Then $A\vec{z}\in E^{v}$, as required.
\end{proof}
We now give an alternative proof of \cite[Lemma 2.1]{RefDN08} in this context.
\begin{thm}\label{2.12}
Let $u,v\in \mathbb{N}$, let $A$ be a $u\times v$ matrix with entries from $\mathbb{Q}$ such that $A$ is image partition regular and let $S$ be a dense subsemigroup of $((0,\infty),+)$. Then $A$ is image partition regular near zero over $S$.
\end{thm}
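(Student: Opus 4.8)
The plan is to reduce everything to the nonemptiness criterion of Lemma \ref{2.8}(a): since $u,v\in\mathbb{N}$ the matrix $A$ is finite, so it will be image partition regular near zero over $S$ as soon as I exhibit a single ultrafilter in $I_{0}(A;S)$. The whole proof is therefore a matter of producing one point of $I_{0}(A;S)$, and the machinery of Section 2 is designed so that such a point arises as a product $q\cdot p$ with $q\in\beta\mathbb{N}$ acting on a point $p\in O^{+}(S)$.

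First I would locate the factor $q$. By hypothesis $A$ is image partition regular (that is, over $\mathbb{N}$), so applying Lemma \ref{2.5}(a) with the semigroup $\mathbb{N}$ gives $I(A)=I(A;\mathbb{N})\neq\emptyset$; fix any $q\in I(A)$. Next I would produce the factor $p$. Because $S$ is dense in $((0,\infty),+)$, for every $\epsilon>0$ the set $(0,\epsilon)\cap S$ is nonempty, and these sets are nested and hence have the finite intersection property, so they generate a filter that extends to some ultrafilter $p\in\beta S$ with $(0,\epsilon)\cap S\in p$ for all $\epsilon>0$. By Definition \ref{2.1} this says precisely $p\in O^{+}(S)$, so $O^{+}(S)\neq\emptyset$.

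Having chosen $q\in I(A)$ and $p\in O^{+}(S)$ with $A$ finite, I would invoke Lemma \ref{2.11}(b) verbatim to conclude $q\cdot p\in I_{0}(A;S)$. Thus $I_{0}(A;S)\neq\emptyset$, and Lemma \ref{2.8}(a) then yields that $A$ is image partition regular near zero over $S$, completing the argument.

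I do not expect a genuine obstacle here, since the substantive content has been packaged into Lemmas \ref{2.5}, \ref{2.8}, and \ref{2.11}; the only point requiring care is conceptual rather than technical. It is the asymmetry in the two factors: the hypothesis supplies image partition regularity over $\mathbb{N}$, which feeds the factor $q\in I(A)=I(A;\mathbb{N})$, whereas the near-zero behaviour is carried entirely by $p\in O^{+}(S)$, which exists only because of the density of $S$. The mixed product $q\cdot p$, defined by $q\cdot p=q\text{-}\lim_{n}(n\cdot p)$, is exactly the bridge that transfers partition regularity over $\mathbb{N}$ into partition regularity near zero over $S$, and recognizing that Lemma \ref{2.11}(b) already encodes this transfer is the key step.
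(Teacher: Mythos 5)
Your proposal is correct and follows exactly the paper's own route: Lemma \ref{2.5}(a) gives $q\in I(A)\neq\emptyset$, Lemma \ref{2.11}(b) applied to $q$ and a point $p\in O^{+}(S)$ yields an element of $I_{0}(A;S)$, and Lemma \ref{2.8}(a) finishes. The only difference is that you spell out the existence of $p\in O^{+}(S)$ from the density of $S$, a detail the paper leaves implicit.
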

\begin{proof}
By Lemma \ref{2.5}(a), $I(A)\neq \emptyset$. By Lemma \ref{2.11}(b), choose $r\in I_{0}(A;S)$ and therefore $I_{0}(A;S)\neq \emptyset$. By Lemma \ref{2.8}(a), one can conclude that $A$ is image partition regular near zero over $S$.
\end{proof}
In the following theorem, we see that the notion of image partition regularity for different subsemigroups of $((0,\infty),+)$ and image partition regularity near zero are same for finite matrices.
\begin{thm}\label{2.13}
Let $u,v \in \mathbb{N}$, and let $A$ be a $u\times v$ matrix with entries from $\mathbb{Q}$ and let $S$ be a dense subsemigroup of $((0,\infty),+)$. The following statements are equivalent:
\\ (a) $A$ is image partition regular.
\\ (b) $A$ is image partition regular near zero over $S$.
\\ (c) $A$ is image partition regular over $S$.
\\ (d) $A$ is image partition regular over $((0,\infty),+)$.
\end{thm}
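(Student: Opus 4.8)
The plan is to establish the cycle $(a)\Rightarrow(b)\Rightarrow(c)\Rightarrow(d)\Rightarrow(a)$, noting that three of the four arrows are essentially formal and that all of the genuine content is concentrated in the last one. The implication $(a)\Rightarrow(b)$ requires nothing new: it is precisely Theorem \ref{2.12}, which says that a matrix that is image partition regular is automatically image partition regular near zero over every dense subsemigroup $S$ of $((0,\infty),+)$.

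For $(b)\Rightarrow(c)$ I would simply observe that the near-zero conclusion is formally stronger than the plain one. Since $S\subseteq(0,\infty)$ we have $0\notin S$, so $S\setminus\{0\}=S$; given a finite coloring of $S$, applying $(b)$ with, say, $\delta=1$ yields $\vec{x}\in S^{v}$ whose image $A\vec{x}$ is monochromatic (its entries even lying in $(0,1)$), and this same $\vec{x}$ witnesses $(c)$. For $(c)\Rightarrow(d)$ I would restrict colorings: a finite coloring of $(0,\infty)$ restricts to a finite coloring of $S$, so $(c)$ produces $\vec{x}\in S^{v}$ with $A\vec{x}$ monochromatic; because its entries lie in $S\subseteq(0,\infty)$ and $\vec{x}\in S^{v}\subseteq(0,\infty)^{v}$, the same $\vec{x}$ witnesses $(d)$.

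The one substantial step is $(d)\Rightarrow(a)$, and here lies the only genuine obstacle: a monochromatic image produced over $(0,\infty)$ has real, not integer, entries, and a finite coloring of $\mathbb{N}$ cannot be transported to $(0,\infty)$ in a way that reads off an integer solution directly. My plan is to route this through the known finitary characterization of finite image partition regular matrices. For a rational matrix, image partition regularity over $\mathbb{N}$ is equivalent to a purely combinatorial condition on the matrix of the kind established in \cite{RefHLS03}, and the same condition characterizes image partition regularity over $(0,\infty)$ and over any dense subsemigroup; this insensitivity to the ambient semigroup is visible already in the fact that scaling the matrix by a positive constant preserves image partition regularity over $(0,\infty)$ (pull a coloring $\chi$ back along $t\mapsto\chi(ct)$). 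Hence $(d)$ forces the combinatorial condition, which in turn yields $(a)$. Combining the four implications gives the equivalence of $(a)$ through $(d)$, and incidentally shows that the particular dense subsemigroup $S$ is immaterial.
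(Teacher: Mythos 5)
Your decomposition into the cycle $(a)\Rightarrow(b)\Rightarrow(c)\Rightarrow(d)\Rightarrow(a)$ is sound, and the three ``formal'' arrows are handled correctly: $(a)\Rightarrow(b)$ is indeed Theorem \ref{2.12}, $(b)\Rightarrow(c)$ is a weakening of the conclusion, and $(c)\Rightarrow(d)$ follows by restricting a coloring of $(0,\infty)$ to $S$ and noting that the witnessing $\vec{x}$ and the entries of $A\vec{x}$ already lie in $S\subseteq(0,\infty)$. The problem is that, as you yourself observe, the entire content of the theorem is concentrated in $(d)\Rightarrow(a)$, and there you do not give an argument: you assert that the combinatorial condition of \cite{RefHLS02} (note: the finite characterization is in that paper, not in \cite{RefHLS03}, which concerns infinite matrices) ``characterizes image partition regularity over $(0,\infty)$ and over any dense subsemigroup.'' That assertion \emph{is} the hard direction of the theorem, restated; offering it as the justification is circular unless you point to a proof. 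The scaling remark (pulling a coloring back along $t\mapsto\chi(ct)$) shows that IPR over $(0,\infty)$ is invariant under positive dilation of the matrix, but it does nothing to convert a real-valued monochromatic image into an integer one, so it is not evidence for the claim you need.

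For comparison, the paper disposes of the whole theorem by citing \cite[Corollary 1.6]{RefDP08}, which is precisely the statement that for finite rational matrices all these notions of image partition regularity coincide; so the paper's ``proof'' is also a citation, but to a result that actually contains the implication $(d)\Rightarrow(a)$. Your write-up would be complete (and arguably more informative than the paper's) if you replaced the hand-wave in the last step with that citation, or with the Hindman--Leader--Strauss characterization of image partition regularity over dense subsemigroups of $((0,\infty),+)$ in terms of the first-entries condition, from which $(d)\Rightarrow(a)$ follows because the condition is intrinsic to the matrix. As written, the key step is a gap.
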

\begin{proof}
The proof follows immediately from \cite[Corollary 1.6] {RefDP08}.
\end{proof}
\begin{thm}\label{2.14}
Let $S$ be a dense subsemigroup of $((0,\infty),+)$ for which $cS$ is $central^{*}$ near zero for each $c\in \mathbb{N}$. Let $u,v \in \mathbb{N}$ and let $A$ be a $u\times v$ matrix with entries from $\mathbb{Q}$. The following statements are equivalent:
\\ (a) $A$ is image partition regular.
\\ (b) For each subset $C$ of $S$, which is central near zero, there exists $\vec{x}\in S^{v}$ such that $A\vec{x}\in C^{u}.$
\end{thm}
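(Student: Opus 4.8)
The plan is to prove the two implications separately, using Lemma \ref{2.11}, Definition \ref{2.6}, Lemma \ref{2.8}, Theorem \ref{2.13}, and the standard characterization of finite image partition regular matrices over $\mathbb{N}$: namely, that $A$ is image partition regular if and only if every central subset of $\mathbb{N}$ contains an image of $A$, which is equivalent to the statement that every minimal idempotent of $(\beta\mathbb{N},+)$ lies in $I(A)$.

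For the implication (b)$\Rightarrow$(a), by Theorem \ref{2.13} it suffices to show that $A$ is image partition regular near zero over $S$. So I would fix a finite coloring $S=\bigcup_{i=1}^{r}C_i$ and some $\delta>0$. Since $O^{+}(S)$ is a compact right topological semigroup it has a minimal idempotent $p\in K(O^{+}(S))$, and some color class $C_{i_0}$ lies in $p$. As $p\in O^{+}(S)$, Definition \ref{2.1} gives $(0,\delta)\cap S\in p$, hence $C_{i_0}\cap(0,\delta)\in p$ and $p\in\overline{C_{i_0}\cap(0,\delta)}\cap K(O^{+}(S))$. Thus $C_{i_0}\cap(0,\delta)$ is central near zero, and applying (b) to it produces $\vec{x}\in S^{v}$ with all entries of $A\vec{x}$ in $C_{i_0}\cap(0,\delta)$; these are monochromatic and lie in $(-\delta,\delta)$, as required.

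For (a)$\Rightarrow$(b), let $C$ be central near zero and fix a minimal idempotent $p\in\overline{C}\cap K(O^{+}(S))$, so that $C\in p$. By Lemma \ref{2.11}(b), $q\cdot p\in I_{0}(A;S)$ for every $q\in I(A)$; hence, by Definition \ref{2.6}, it is enough to produce some $q\in I(A)$ with $C\in q\cdot p$, since then (taking $P=C$ and an arbitrary $\epsilon>0$ in the definition of $I_{0}(A;S)$) I obtain $\vec{x}\in S^{v}$ with $A\vec{x}\in(C\cap(0,\epsilon))^{u}\subseteq C^{u}$. By the definition of the action and Lemma \ref{2.10}, the condition $C\in q\cdot p$ is equivalent to the set $D=\{n\in\mathbb{N}: n^{-1}C\in p\}=\{n\in\mathbb{N}: C\in n\cdot p\}$ being a member of $q$. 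Because $A$ is image partition regular, every minimal idempotent of $(\beta\mathbb{N},+)$ already lies in $I(A)$, so it would suffice to show that $D$ is central in $(\mathbb{N},+)$, that is, that $D$ belongs to some minimal idempotent $q$.

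The hypothesis that $cS$ is central$^{*}$ near zero for every $c\in\mathbb{N}$ enters through a tool I would establish first. Since $l_n(x)=nx$ is an injective additive homomorphism of $S$ with image $nS$, its extension $\tilde{l_{n}}$ is an isomorphism of $\beta S$ onto $\overline{nS}$; using $nS\in p'$ for every minimal idempotent $p'$ (the central$^{*}$ hypothesis) one checks that $\tilde{l_{n}}$ restricts to an isomorphism of $O^{+}(S)$ onto $O^{+}(S)\cap\overline{nS}$ and carries $K(O^{+}(S))$ onto $K(O^{+}(S))$. Consequently each $n\cdot p=\tilde{l_{n}}(p)$ is again a minimal idempotent of $O^{+}(S)$, so multiplication by $n$ permutes the minimal idempotents. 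The hard part will be the final step: showing that $D$ is central in $(\mathbb{N},+)$, i.e.\ that the multiplicatively defined pull-back set $D$ meets an additive minimal idempotent, for a suitable choice of $p\in\overline{C}\cap K(O^{+}(S))$. This is exactly where the interplay between the multiplicative action (under which $D$ is defined) and the additive idempotent $q$ must be controlled, and it is the central$^{*}$ hypothesis on the sets $cS$ that I expect to make this transfer of largeness from near zero in $S$ to $(\mathbb{N},+)$ go through; I regard this as the crux of the argument.
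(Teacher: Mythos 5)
Your argument for (b)$\Rightarrow$(a) is correct and is essentially the paper's (the paper simply cites Theorem \ref{2.12}/\ref{2.13} for this direction): one color class of any finite partition of $S$, intersected with $(0,\delta)$, is central near zero, so (b) yields image partition regularity near zero over $S$, and Theorem \ref{2.13} converts this to image partition regularity over $\mathbb{N}$.

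The direction (a)$\Rightarrow$(b), however, has a genuine gap, and you flag it yourself. You reduce the problem to producing $q\in I(A)$ with $D=\{n\in\mathbb{N}: n^{-1}C\in p\}\in q$, and propose to do this by showing $D$ is central in $(\mathbb{N},+)$; but you never prove this, and it is not clear that it is true. The set $D$ is defined through the multiplicative action of $\mathbb{N}$ on $\beta S$, while the centrality you need is additive; nothing in your outline (including the observation that multiplication by $n$ permutes minimal idempotents of $O^{+}(S)$) controls how $D$ sits inside $(\beta\mathbb{N},+)$. Since the entire content of (a)$\Rightarrow$(b) is concentrated in this unproved claim, the proof is incomplete. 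The paper avoids this issue entirely by a different route: by \cite[Theorem 2.10(f)]{RefHLS02} one writes $AG=B$ with $B$ a first entries matrix whose only first entry is some $c\in\mathbb{N}$, and then \cite[Theorem 5.3]{RefHL99} — which is exactly where the hypothesis that $cS$ is central$^{*}$ near zero is consumed — supplies $\vec{y}\in S^{m}$ with $B\vec{y}\in C^{u}$, whence $\vec{x}=G\vec{y}$ works. If you want to salvage your ultrafilter-theoretic approach, you would in effect have to reprove that first-entries result near zero, so I recommend switching to the factorization through a first entries matrix.
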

\begin{proof}
(a)$\Rightarrow$ (b) Suppose $A$ is image partition regular. 
By \cite[Theorem 2.10(f)]{RefHLS02}, there exist $m\in \mathbb{N}$, a $v\times m$ matrix $G$ with entries from $\omega$ and no row equal to $\vec{0}$, a $v\times m$ first entries matrix $B$ with entries from $\omega$, and $c\in \mathbb{N}$ such that $c$ is the only first entry of $B$ and $AG=B$. By \cite[Theorem 5.3]{RefHL99}, choose $\vec{y}\in S^{m}$ such that $B\vec{y}\in C^{u}$. Let $\vec{x}=G\vec{y}$. Then $A\vec{x}=AG\vec{y}=B\vec{y} \in C^{u}$, as required.
\\ (b)$\Rightarrow$ (a) Trivially follows from Theorem \ref{2.12}.
\end{proof}
\begin{thm}\label{2.15}
Let $A$ and $B$ be finite and infinite image partition regular matrices respectively ( with rational coefficients). Then $\left(\begin{matrix} A & 0 \\ 0 & B \end{matrix}\right)$ is image partition regular.
\end{thm}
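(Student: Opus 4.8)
The approach is to reformulate the problem through the sets $I(\cdot)$ and to exhibit a single ultrafilter common to $I(A)$ and $I(B)$. Let $A$ be $u\times v$ and write $C=\left(\begin{smallmatrix} A & 0 \\ 0 & B \end{smallmatrix}\right)$ for the diagonal sum. A vector feeding $C$ decomposes as $\vec{x}=(\vec{x}_{1},\vec{x}_{2})$, where $\vec{x}_{1}$ indexes the columns of $A$ and $\vec{x}_{2}$ those of $B$, and then $C\vec{x}$ has all its entries in a set $P\subseteq\mathbb{N}$ precisely when $A\vec{x}_{1}\in P^{u}$ and $B\vec{x}_{2}\in P^{\omega}$. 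Because $\vec{x}_{1}$ and $\vec{x}_{2}$ can be chosen independently, I would first record the identity $I(C)=I(A)\cap I(B)$. By Lemma \ref{2.5}(a), since $C$ is image partition regular iff $I(C)\neq\emptyset$, it then suffices to prove that $I(A)\cap I(B)\neq\emptyset$.

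To produce a common point, I would pick $q\in I(A)$ and $p\in I(B)$ (both nonempty by Lemma \ref{2.5}(a)) and consider the product $q\cdot p=q\text{-}\lim_{n}(n\cdot p)$ in $\beta\mathbb{N}$. That $q\cdot p\in I(A)$ is shown exactly as in Lemma \ref{2.11}(b): if $E\in q\cdot p$ then $W=\{n:n^{-1}E\in p\}\in q$, so $q\in I(A)$ yields $\vec{x}$ with $\vec{y}=A\vec{x}\in W^{u}$; as $A$ has only $u<\omega$ rows, $\bigcap_{i=1}^{u}y_{i}^{-1}E$ lies in $p$ and is therefore nonempty, and any $a$ in it gives $A(a\vec{x})\in E^{u}$.

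The delicate half, and the step I expect to be the real obstacle, is $q\cdot p\in I(B)$, because the finite-intersection device just used breaks down for the infinitely many rows of $B$. Here I would argue by invariance under the action of $\mathbb{N}$ together with compactness. First, each map $p\mapsto n\cdot p$ preserves $I(B)$: if $B\vec{x}\in(n^{-1}E)^{\omega}$ then $B(n\vec{x})=n\,B\vec{x}\in E^{\omega}$, so from $p\in I(B)$ one gets $n\cdot p\in I(B)$ for every $n\in\mathbb{N}$. Consequently $\{n\cdot p:n\in\mathbb{N}\}\subseteq I(B)$, and since $I(B)$ is compact, hence closed in $\beta\mathbb{N}$ (Lemma \ref{2.5}(a)), the $q$-limit $q\cdot p$ remains in $I(B)$.

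Putting the two halves together gives $q\cdot p\in I(A)\cap I(B)=I(C)$, so $I(C)\neq\emptyset$ and $C$ is image partition regular by Lemma \ref{2.5}(a). The only asymmetry between the hypotheses is exactly the one exploited above: finiteness of $A$ allows the passage from ``each $y_{i}^{-1}E\in p$'' to ``their common refinement belongs to $p$'', whereas for $B$ we have no such refinement and must instead use that $I(B)$ is a compact set stable under multiplication by positive integers. This is the same mechanism used near zero in Lemma \ref{2.11}, here transported to the whole of $\beta\mathbb{N}$.
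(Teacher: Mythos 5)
Your proof is correct, and it uses exactly the mechanism the paper relies on: the paper's own ``proof'' of Theorem \ref{2.15} is a citation to \cite{RefHLS03} and \cite{RefPG17}, but its proof of the near-zero analogue (Theorem \ref{2.16}) proceeds precisely by taking $q\in I(A)$, $p\in I(B)$ and showing the product $q\cdot p$ lies in $I(A)\cap I(B)$ via Lemma \ref{2.11}, which is what you do over $\beta\mathbb{N}$. Your closure/compactness argument for $q\cdot p\in I(B)$ is a harmless variant of the paper's direct one-element argument in Lemma \ref{2.11}(a), and the identity $I(C)=I(A)\cap I(B)$ you record is the same fact the paper uses implicitly when it splits $\vec z$ into $\vec x$ and $\vec y$.
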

\begin{proof}
See the proof of \cite[Lemma 2.3]{RefHLS03}  or \cite[Theorem 2.4]{RefPG17}.
\end{proof}
Following theorem is an analogue of the above theorem.
\begin{thm}\label{2.16}
Let $S$ be a dense subsemigroup of $((0,\infty),+)$. Let $A$ and $B$ be finite and infinite image partition regular matrices over $S$ respectively. Then $\left(\begin{matrix} A & 0 \\ 0 & B \end{matrix}\right)$ is image partition regular near zero over $S$.
\end{thm}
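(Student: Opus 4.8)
The plan is to translate the conclusion into the ultrafilter language of Section 2 and to produce a single ultrafilter that witnesses image partition regularity near zero for the diagonal sum $M=\left(\begin{smallmatrix} A & 0 \\ 0 & B \end{smallmatrix}\right)$. By Lemma \ref{2.8}(a) it suffices to show $I_0(M;S)\neq\emptyset$. First I would record the elementary decoupling: a column vector for $M$ splits as $(\vec{x}_1,\vec{x}_2)$ with $M\vec{x}=(A\vec{x}_1,B\vec{x}_2)$, so for fixed $P$ and $\epsilon$ one can place all entries of $M\vec{x}$ in $P\cap(0,\epsilon)$ exactly when one can do so separately for $A\vec{x}_1$ and for $B\vec{x}_2$. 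Hence $I(M;S)=I(A;S)\cap I(B;S)$, and combining with Remark \ref{2.7} applied to $M$ gives $I_0(M;S)=I(A;S)\cap I(B;S)\cap O^{+}(S)$. Thus the task reduces to finding one ultrafilter that simultaneously witnesses $A$, witnesses $B$, and lies in $O^{+}(S)$.

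Next I would assemble the ingredients and merge them through the multiplicative action. Since $A$ is finite and image partition regular over $S$, Theorem \ref{2.13} makes it image partition regular, so by Lemma \ref{2.5}(a) I may fix some $q\in I(A)\subseteq\beta\mathbb{N}$. Suppose I have in hand a near-zero witness for the infinite block, namely a point $p\in I(B;S)\cap O^{+}(S)$. Then Lemma \ref{2.11}(b) immediately gives $q\cdot p\in I_0(A;S)=I(A;S)\cap O^{+}(S)$, which already secures two of the three required memberships, $q\cdot p\in I(A;S)$ and $q\cdot p\in O^{+}(S)$. The remaining membership $q\cdot p\in I(B;S)$ is the infinite analogue of the first computation in Lemma \ref{2.11}(a): given $E\in q\cdot p$, the set $\{n\in\mathbb{N}:n^{-1}E\in p\}$ lies in $q$, so I pick $a$ with $a^{-1}E\in p$, use $p\in I(B;S)$ to choose $\vec{x}\in S^{\omega}$ with $B\vec{x}\in (a^{-1}E)^{\omega}$, and note that $a\vec{x}\in S^{\omega}$ satisfies $B(a\vec{x})=a\,B\vec{x}\in E^{\omega}$. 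This step uses no finiteness of $B$. Putting the three memberships together yields $q\cdot p\in I(A;S)\cap I(B;S)\cap O^{+}(S)=I_0(M;S)$, and Lemma \ref{2.8}(a) then delivers the theorem.

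The genuinely load-bearing step, and where I expect the real work to sit, is securing the near-zero witness, i.e. showing $I(B;S)\cap O^{+}(S)=I_0(B;S)\neq\emptyset$ for the infinite block. The finite block is harmless here: Theorem \ref{2.12} already upgrades its regularity to regularity near zero, and Lemma \ref{2.11}(b) is designed precisely to transport a near-zero ultrafilter into $I_0(A;S)$. For $B$, however, neither tool is available: the proof of Lemma \ref{2.11}(b) passes through a finite intersection $\bigcap_{i=1}^{u}y_i^{-1}E$ and so does not extend to infinitely many rows, while the action of $\mathbb{N}$ only dilates away from zero and therefore cannot move an arbitrary point of $I(B;S)$ into $O^{+}(S)$. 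Consequently the near-zero content must come from the infinite block itself, so I would read the hypothesis on $B$ as supplying $I_0(B;S)\neq\emptyset$ (equivalently, that $B$ is image partition regular near zero over $S$, via Lemma \ref{2.8}(a)); this is exactly the one input that the finite-to-infinite merging argument above cannot generate on its own.
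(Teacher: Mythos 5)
Your proof is essentially the paper's own: both arguments take $q\in I(A)$ (after invoking Theorem \ref{2.13} to make the finite block image partition regular over $\mathbb{N}$) and a point $p\in I_{0}(B;S)$, then use the action of $\beta\mathbb{N}$ and Lemma \ref{2.11} to show $q\cdot p\in I_{0}(A;S)\cap I_{0}(B;S)=I_{0}(M;S)$. You are also right to flag that the stated hypothesis on $B$ (image partition regular \emph{over} $S$) only yields $I(B;S)\neq\emptyset$ while the argument needs $I_{0}(B;S)\neq\emptyset$; the paper's proof silently makes this same stronger assumption when it writes ``take $p\in I_{0}(B;S)$,'' so your reading of the hypothesis is exactly what the paper actually uses.
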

\begin{proof}
Let $u,v \in \mathbb{N}$ and $A$ be a $u\times v$ matrix and $M=\left(\begin{matrix} A & 0 \\ 0 & B \end{matrix}\right)$. By Theorem   \ref{2.13}, $A$ is image partition regular. Now take $q\in I(A)$ and $p\in I_{0}(B;S)$. Then by Lemma \ref{2.11}(b), $q\cdot p \in I_{0}(A;S)$ and by Lemma \ref{2.11}.(a), $q\cdot p \in I_{0}(B;S)$. Let $k\in \mathbb{N}$ be given and $S=\bigcup_{i=1}^{k} E_{i}$ and let $\delta >0$. Choose $i\in \{1,2,\ldots,k\}$ such that $E_{i}\in q\cdot p$. So by definition of $I_{0}(A;S)$ and $I_{0}(B;S)$, there exists $\vec{x}\in S^{v}$ such that $A\vec{x} \in (E_{i}\cap(0,\delta))^{u}$ and $\vec{y}\in S^{\omega}$ such that $B\vec{y}\in (E_{i}\cap(0,\delta))^{\omega}.$ Let $\vec{z}=\left(\begin{matrix} \vec{x}
\\ \vec{y} \end{matrix}\right),$ then $M\vec{z}=\left(\begin{matrix} A\vec{x}
\\ B\vec{y} \end{matrix}\right)$. So $M\vec{z}\in (E_{i}\cap(0,\delta))^{\omega}$. Therefore $M=\left(\begin{matrix} A & 0 \\ 0 & B \end{matrix}\right)$ is image partition regular near zero over $S$.
\end{proof}
\begin{thm}\label{2.17}
Let $S$ be a dense subsemigroup of $((0,\infty),+)$ and let $\mathcal{F}$ be a class of finite image partition regular matrix with entries from $\mathbb{Q}$. If $B$ is an image partition regular matrix near zero over $S$. Then $I(B;S)\cap \big( \bigcap_{A\in \mathcal{F}}I(A;S) \big)\neq \emptyset$.
\end{thm}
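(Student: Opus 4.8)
The plan is to produce a single ultrafilter lying in $I(B;S)$ and in every $I(A;S)$ at once, realising it as a product $q\cdot p$ under the action of $\beta\mathbb{N}$ on $\beta S$, where $p$ is extracted from $B$ and $q$ from the family $\mathcal{F}$.

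First I would use $B$ to obtain the ``near zero'' factor. Since $B$ is image partition regular near zero over $S$, Lemma \ref{2.8}(a) gives $I_0(B;S)\neq\emptyset$, so I fix some $p\in I_0(B;S)$; by Remark \ref{2.7} this means $p\in I(B;S)\cap O^{+}(S)$, and in particular $p\in O^{+}(S)$. The crux of the argument is then to find one $q\in\beta\mathbb{N}$ with $q\in I(A)$ for \emph{every} $A\in\mathcal{F}$ simultaneously. For this I would take $q$ to be a minimal idempotent of $\beta\mathbb{N}$, which exists because $\beta\mathbb{N}$ is a compact right topological semigroup. Every $P\in q$ is then a central subset of $\mathbb{N}$, and each $A\in\mathcal{F}$ is finite and image partition regular; by the classical characterization of finite image partition regular matrices in terms of central sets (the over-$\mathbb{N}$ analogue of Theorem \ref{2.14}), for each such central $P$ there is $\vec{x}$ with entries from $\mathbb{N}$ such that all entries of $A\vec{x}$ lie in $P$. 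By Definition \ref{2.4} this is precisely the statement $q\in I(A)$, and since $q$ was chosen independently of $A$ we obtain $q\in\bigcap_{A\in\mathcal{F}}I(A)$.

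Finally I would form $q\cdot p\in\beta S$ and check it against each factor. Because $q\in\beta\mathbb{N}$ and $p\in I_0(B;S)$, Lemma \ref{2.11}(a) yields $q\cdot p\in I_0(B;S)\subseteq I(B;S)$. For each $A\in\mathcal{F}$ the matrix $A$ is finite, $q\in I(A)$, and $p\in O^{+}(S)$, so Lemma \ref{2.11}(b) yields $q\cdot p\in I_0(A;S)\subseteq I(A;S)$. Hence $q\cdot p\in I(B;S)\cap\big(\bigcap_{A\in\mathcal{F}}I(A;S)\big)$, which is therefore nonempty. The one delicate point is the middle step: we must serve the entire family $\mathcal{F}$ with a single $q$, and this is exactly what minimal idempotents provide, since their members are central and finite image partition regular matrices meet every central set; consequently the cardinality of $\mathcal{F}$ is irrelevant, and the remaining steps are routine applications of Lemmas \ref{2.8} and \ref{2.11} together with Remark \ref{2.7}.
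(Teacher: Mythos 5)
Your proposal is correct, but it reaches the conclusion by a genuinely different route than the paper. The paper's proof is a two-step compactness argument: for each \emph{finite} subclass $\mathcal{G}\subseteq\mathcal{F}$ it invokes Lemma \ref{2.11}(b) (implicitly via a diagonal sum of the matrices in $\mathcal{G}$, which is again a finite image partition regular matrix) to get $I(B;S)\cap\bigcap_{A\in\mathcal{G}}I(A;S)\neq\emptyset$, and then, since each $I(A;S)$ is a closed subset of the compact space $\beta S$ by Lemma \ref{2.5}(a), the finite intersection property yields the full intersection. You instead exhibit a single point of the intersection outright: a minimal idempotent $q\in\beta\mathbb{N}$ lies in $I(A)$ for \emph{every} finite image partition regular $A$ at once, because each member of $q$ is central in $\mathbb{N}$ and finite image partition regular matrices have images inside every central set; then $q\cdot p$ with $p\in I_0(B;S)$ lands in $I_0(B;S)\cap\bigcap_{A\in\mathcal{F}}I_0(A;S)$ by Lemma \ref{2.11}. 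This buys you two things the paper's proof does not make explicit: you avoid the compactness step (and the need to know that diagonal sums of finitely many finite image partition regular matrices are image partition regular), and you actually prove the slightly stronger statement that the \emph{near-zero} sets $I_0(\cdot\,;S)$ have a common point. The cost is one external ingredient: the classical fact that finite image partition regular matrices meet every central subset of $\mathbb{N}$, which is not proved in the paper (only its near-zero analogue, Theorem \ref{2.14}, is) but is standard and is exactly the kind of background the paper presupposes. Both arguments are sound.
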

\begin{proof}
Let $\mathcal{G}$ be a finite subclass of $\mathcal{F}$. Now by Lemma \ref{2.11}(b), we can conclude that $I(B;S)\cap \big( \bigcap_{A\in \mathcal{G}}I(A;S) \big)\neq \emptyset$ and the result follows from the compactness argument.
\end{proof}
\section{Subtracted image partition regularity near zero}
In \cite{RefHS18} authors produced several new examples of infinite image partition regular matrices. Following them we have produced some new examples of infinite image partition regular matrices near zero. We now recall the following definition \cite[Definition 1.5]{biswaspaul}.
\begin{defn}\label{3.1}
Let $S$ be  a dense subsemigroup of $((0,\infty),+)$ and let $A$ be an $\omega\times \omega$ matrix with entries from $\mathbb{Q}$. Then $A$ is centrally image partition regular near zero if and only if whenever $C$ is a central set near zero in $S$, there exists $\vec{x}\in S^{\omega}$ such that $A\vec{x}\in C^{\omega}.$
\end{defn}
We now introduce the following definition of subtracted image partition regular matrices near zero over $S$.
\begin{defn}\label{3.2}
Let $S$ be a dense subsemigroup of $((0,\infty),+)$ and let $A$ be an $\omega\times \omega$ matrix with entries from $\mathbb{Q}$. The matrix $A$ is subtracted image partition regular near zero over $S$, if and only if 
\\ (1) no row of $A$ is $\vec{0}$.
\\ (2) for each $i\in \omega$, $\{j\in \omega:a_{ij}\neq 0\} $ is finite, and
\\ (3) there exists $v\in \mathbb{N}$, an $\omega \times v$ matrix $A_{1}$ and an $\omega \times \omega$ matrix $A_{2}$ such that rows of $A_{1}$ are the rows of a finite image partition regular matrix, $A_{2}$ is a image partition regular matrix near zero over $S$ and $A=(A_{1} \ A_{2})$.
\end{defn}
In the above definition if $A_{2}$ is centrally image partition regular near zero over $S$, then $A$ is said to be subtracted centrally image partition regular matrix near zero over $S$ and if $A_{2}$ is Milliken-Taylor matrix near zero (Definition \ref{3.7}(b)) then $A$ is said to be subtracted Milliken-Taylor matrix near zero.
\begin{thm}\label{3.3}
Let $S$ be a dense subsemigroup of $((0,\infty),+)$ and let $A$ be a subtracted image partition regular matrix near zero. Then $A$ is image partition regular near zero.
\end{thm}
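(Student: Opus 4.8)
The plan is to verify the defining coloring condition for image partition regularity near zero directly, in the spirit of the proof of Theorem \ref{2.16}. Write $A=(A_1\ A_2)$ as in Definition \ref{3.2} and fix a finite image partition regular matrix $C$ (say $k\times v$) each of whose rows supplies the rows of $A_1$. I then isolate the two inputs the argument will consume: by Theorem \ref{2.13} the finite matrix $C$ is image partition regular near zero over $S$, so by Lemma \ref{2.8}(a) I may pick $\mu\in I_0(C;S)$; and since $A_2$ is image partition regular near zero over $S$, Lemma \ref{2.8}(a) again yields some $p\in I_0(A_2;S)$.

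The key idea is that the entries of $A\vec{x}=A_1\vec{x}_1+A_2\vec{x}_2$ are genuine \emph{sums} of a finite-matrix contribution and a near-zero contribution, which is exactly what distinguishes this situation from the diagonal sum of Theorem \ref{2.16}. To match this additive structure I would combine $\mu$ and $p$ through the additive operation of $(\beta S,+)$, not through the scaling action of Lemma \ref{2.11}. Since $\mu,p\in O^{+}(S)$ and $O^{+}(S)$ is a subsemigroup of $(\beta S,+)$, we have $\mu+p\in O^{+}(S)$. Given a finite coloring $S=\bigcup_{l=1}^{n}E_l$ and a $\delta>0$, I would choose $l$ with $E:=E_l\in\mu+p$; by the definition of the sum of ultrafilters, the set $B:=\{x\in S:-x+E\in p\}$ then belongs to $\mu$.

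The remainder is bookkeeping. Applying $\mu\in I_0(C;S)$ to the member $B$ with tolerance $\delta/2$ produces $\vec{x}_1\in S^{v}$ with every entry of $C\vec{x}_1$ — hence every entry of $A_1\vec{x}_1$, which is among them — in $B\cap(0,\delta/2)$. Let $m_1,\dots,m_j$ be the finitely many values taken by the entries of $A_1\vec{x}_1$; each $m_s\in B$ gives $-m_s+E\in p$, so $E':=\bigcap_{s=1}^{j}(-m_s+E)\in p$. Applying $p\in I_0(A_2;S)$ to the member $E'$ with tolerance $\delta/2$ produces $\vec{x}_2\in S^{\omega}$ with every entry of $A_2\vec{x}_2$ in $E'\cap(0,\delta/2)$. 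For $\vec{x}=\binom{\vec{x}_1}{\vec{x}_2}$, row $i$ of $A\vec{x}$ equals $m+d$, where $m$ is the corresponding entry of $A_1\vec{x}_1$ (one of the $m_s$) and $d$ the corresponding entry of $A_2\vec{x}_2$; since $d\in E'\subseteq -m+E$ we get $m+d\in E$, and since $m,d\in(0,\delta/2)$ we get $m+d\in(0,\delta)$. Hence the entries of $A\vec{x}$ are monochromatic and lie in $(0,\delta)$, which is the conclusion.

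I expect the main obstacle to be precisely the point where the naive attempt to invoke Theorem \ref{2.16} breaks down: one must control the additive sum $A_1\vec{x}_1+A_2\vec{x}_2$ and not merely the two blocks separately. The device that overcomes it is forming $\mu+p$ and forcing the finite-matrix solution into $B=\{x:-x+E\in p\}$; this works with no extra assumption on $S$ exactly because $\mu$ is drawn from $I_0(C;S)$, so the argument sidesteps the hypothesis ``$cS$ is central$^{*}$ near zero'' required in Theorem \ref{2.14}. A small point to check carefully is that each row of $A_1$ is literally a row of $C$, so that the entries of $A_1\vec{x}_1$ form a subset of the entries of $C\vec{x}_1$ and thereby inherit membership in $B\cap(0,\delta/2)$.
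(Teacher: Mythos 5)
Your proof is correct and follows essentially the same route as the paper's: decompose $A=(A_1\ A_2)$, use a sum of ultrafilters so that the finite-block image lands in $\{x\in S:-x+E\in p\}$, and then solve the infinite block inside the finite intersection of translates $\bigcap_{s}(-m_s+E)$. If anything, your version is the more careful one: the paper's proof picks a single $p\in I(A_1)\cap I(A_2)$, works with a coloring of $\mathbb N$, and never tracks the $(0,\delta)$ requirement, whereas your choice of $\mu\in I_0(C;S)$ and $p\in I_0(A_2;S)$ together with the $\delta/2+\delta/2$ split actually delivers the \emph{near zero} conclusion that the statement asserts.
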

\begin{proof}
Since $A$ is a subtracted image partition regular matrix near zero over $S$, pick 
$u,v\in\mathbb N$, a $u\times v$ image partition regular matrix $D$, an $\omega\times v$ 
matrix $A_1$ whose rows are all rows of $D$, and an $\omega \times \omega$ image partition 
regular matrix $A_2$ such that $A=\left(\begin{matrix} A_1 & A_2 \end{matrix}\right)$ as 
in the Definition \ref{3.2}. Note that $I(A_1)\cap I(A_2)=I(D)\cap I(A_2)\neq\emptyset$ by 
Theorem \ref{2.17}. Choose $p\in I(A_1)\cap I(A_2)$. Let $r\in\mathbb N$ and 
$\mathbb N=\bigcap_{i=1}^rE_i$ be a $r$-coloring of $\mathbb N$. Pick $k\in\{1, 2, ....., r\}$ 
such that $E_k\in p+p$. Then $U=\{x\in\mathbb N:-x+E_k\in p\}\in p$. Pick $\vec{x}^{(1)}\in\mathbb{N}^v$ 
such that $A_1\vec{x}^{(1)}\in U$. If $\vec{y}=A\vec{x}^{(1)}$ then $\{y_j:j<\omega\}$ is finite 
so $V=\bigcap_{i<\omega}(-y_i+E_k)\in p$. Choose $\vec{x}^{(2)}\in\mathbb{N}^\omega$ 
such that $\vec w=A_2\vec{x^{(2)}} \in V^\omega$. Let 
$\vec{x}=\left(\begin{matrix} \vec{x}^{(1)} \\ \vec{x}^{(2)} \end{matrix}\right)$. 
Then $\vec{x}\in \mathbb{N}^\omega$ and 
$A\vec{x}=A_1\vec{x}^{(1)}+A_2\vec{x}^{(2)}=\vec{y}+\vec{w}\in E_k^\omega$. 
Therefore $A$ is image partition regular near zero over $S$.
\end{proof}
\begin{thm}\label{3.4}
Let $S$ be a dense subsemigroup of $((0,\infty),+)$ for which $cS$ is $central^{*}$ near zero for each $c\in \mathbb{N}$. Let $A$ be a subtracted centrally image partition regular matrix near zero over $S$. Then $A$ is centrally image partition regular near zero over $S$.
\end{thm}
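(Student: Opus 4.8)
The plan is to mirror the structure of the proof of Theorem \ref{3.3}, but to work throughout with members of a single minimal idempotent of $O^{+}(S)$ in place of a finite coloring, and to invoke Theorem \ref{2.14} for the finite block. First I would unpack the hypothesis: since $A$ is subtracted centrally image partition regular near zero over $S$, I write $A=\left(\begin{matrix} A_1 & A_2 \end{matrix}\right)$ as in Definition \ref{3.2}, where $A_1$ is $\omega\times v$ with every row a row of some finite $u\times v$ image partition regular matrix $D$, and $A_2$ is $\omega\times\omega$ and centrally image partition regular near zero over $S$. Given a set $C$ central near zero in $S$, I would use Definition \ref{2.2} to fix a minimal idempotent $p\in\bar{C}\cap K(O^{+}(S))$, so that $C\in p=p+p$. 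Setting $U=\{x\in S:-x+C\in p\}$, the idempotent law gives $U\in p$, and since $p$ is a minimal idempotent of $K(O^{+}(S))$, the set $U$ is itself central near zero.

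Next I would handle the finite block. Because $S$ satisfies the standing $central^{*}$ hypothesis and $D$ is finite image partition regular, Theorem \ref{2.14} applies to the central near zero set $U$ and yields $\vec{x}^{(1)}\in S^{v}$ with $D\vec{x}^{(1)}\in U^{u}$. As every row of $A_1$ is a row of $D$, the vector $\vec{y}=A_1\vec{x}^{(1)}$ has all its entries in $U$, and crucially its set of distinct entries $\{y_i:i<\omega\}$ is finite, being contained among the $u$ coordinates of $D\vec{x}^{(1)}$. For each such $y_i\in U$ we have $-y_i+C\in p$, so the finite intersection $V=\bigcap_{i<\omega}(-y_i+C)$ again lies in $p$ and is therefore central near zero.

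Finally I would handle the infinite block. Since $A_2$ is centrally image partition regular near zero over $S$ and $V$ is central near zero, there is $\vec{x}^{(2)}\in S^{\omega}$ with $\vec{w}=A_2\vec{x}^{(2)}\in V^{\omega}$. Putting $\vec{x}=\left(\begin{matrix} \vec{x}^{(1)} \\ \vec{x}^{(2)} \end{matrix}\right)\in S^{\omega}$, for each $i<\omega$ we have $w_i\in V\subseteq -y_i+C$, whence $(A\vec{x})_i=y_i+w_i\in C$; thus $A\vec{x}\in C^{\omega}$ and $A$ is centrally image partition regular near zero over $S$.

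The one place that genuinely uses the hypothesis that $cS$ is $central^{*}$ near zero for each $c\in\mathbb{N}$ is the finite-block step: it is precisely Theorem \ref{2.14} that lets me force $D\vec{x}^{(1)}$ into the central near zero set $U$, which is the essential upgrade over the plain coloring argument of Theorem \ref{3.3}. I expect the main point to be the ordering and bookkeeping — treating the finite block first so that only finitely many translates $-y_i+C$ must be intersected to stay inside $p$, and then observing that passing to the members $U,V$ of the minimal idempotent $p$ preserves centrality near zero so that both Theorem \ref{2.14} and the central image partition regularity of $A_2$ remain applicable.
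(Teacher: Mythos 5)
Your proposal is correct and follows essentially the same route as the paper's own proof: decompose $A=(A_1\ A_2)$, take a minimal idempotent $p$ of $O^{+}(S)$ with $C\in p$, use the idempotent law to get the translate set into $p$, apply Theorem \ref{2.14} to the finite block, intersect the finitely many translates $-y_i+C$, and finish with the central image partition regularity near zero of $A_2$. If anything, your version is slightly cleaner in keeping everything inside $S$ and $O^{+}(S)$ (the paper's proof briefly phrases the intermediate set as central in $(\mathbb N,+)$), but the argument is the same.
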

\begin{proof}
Since $A$ is a subtracted centrally image partition regular
matrix near zero over $S$, pick $u,v\in\mathbb N$, a $u\times v$ image partition
regular matrix $D$, an $\omega\times v$ matrix $A_1$ whose rows
are all rows of $D$, and an $\omega \times \omega$ centrally image
partition regular matrix $A_2$ such that $A=\left(\begin{matrix}
A_1 & A_2 \end{matrix}\right)$ as in Definition \ref{3.2}. Let $C$ be a
central set near zero in $S$ and $p$ be a minimal idempotent
of $O^{+}(S)$ such that $C\in p$. Let $B=\{x\in\mathbb
N:-x+C\in p\}$. Then $B\in p$ and hence $B$ is central in
$(\mathbb N,+)$. Now by Theorem \ref{2.14}, pick $\vec{x}^{(1)}\in
\mathbb{N}^v$ such that $D\vec{x}^{(1)}\in B^u$. If
$y=A_1\vec{x}^{(1)}$  then $\{y_i: i\in \omega\}$ is  finite so
$E=\bigcap_{i<\omega}(-y_i+C)\in p$. Therefore $E$ is central near zero in
$S$. Choose $\vec{x}^{(2)}\in \mathbb{N}^\omega$ such
that $\vec w=A_2\vec{x}^{(2)} \in E^\omega$. Let
$\vec{x}=\left(\begin{matrix} \vec{x}^{(1)} \\ \vec{x}^{(2)}
\end{matrix}\right)$. Then $\vec{x}\in \mathbb{N}^\omega$ and
$A\vec{x}=A_1\vec{x}^{(1)}+A_2\vec{x}^{(2)}=\vec{y}+\vec{w}\in
C^\omega$. Therefore $A$ is centrally image partition regular near zero over $S$.
\end{proof}
We now recall the following definitions \cite[Definition 2.1]{RefHS00}, \cite[Definition 2.3]{RefHS18}.
\begin{defn}\label{3.5}
Let $v\in \mathbb{N}\cup \{\omega \}$ and let $\vec{x}\in \mathbb{Z}^{v}$. Then 
\\ (a) $d(\vec{x})$ is the sequence obtained by deleting occurence of $0$ from $\vec{x}$.
\\ (b) $c(\vec{x})$ is the sequence obtained by deleting every digit in $d(\vec{x})$ which is equal to its predecessor and
\\ (c) $\vec{x}$ is a compressed sequence if and only if $\vec{x}=c(\vec{x})$.
\end{defn}
\begin{defn}\label{3.6}
Let $k\in \mathbb{N}$ and let $\vec{a}=<a_{1},a_{2},\ldots,a_{k}>$ be a compressed sequence in $\mathbb{Z}\setminus \{0\}$ with $a_{k}>0$. Then $MT(\vec{a})$ is a matrix consisting of all rows $\vec{r}$ with finitely many nonzero entries such that $C(\vec{r})=\vec{a}$.
\end{defn}
\begin{defn}\label{3.7}
(a) An $\omega
\times \omega$ matrix $M$ is said to be Milliken-Taylor matrix if there exists a compressed sequence $<a_{1},a_{2},\ldots,a_{k}>$ with $a_{k}>0$ such that $M=MT(\vec{a})$.
\\ (b) An $\omega
\times \omega$ matrix $M$ is said to be Milliken-Taylor matrix near zero if there exists a compressed sequence $<a_{1},a_{2},\ldots,a_{k}>$ with $a_{1}>0$, $a_{k}>0$ such that $M=MT(\vec{a})$.  
\end{defn}
\begin{lem}\label{3.8}
Let $S$ be a dense subsemigroup of $((0,\infty),+)$. Let $u,v \in \mathbb{N}$ and let $A$ be a $u\times v$ image partition regular matrix with entries from $\mathbb{Q}$ and let $p\in I(A;S)$. Then for each $D\in p$ and each $\epsilon>0$, there exists, $Q\subseteq (S\cap (0,\epsilon))^{v}$ such that for $\vec{z}\in Q$, $A\vec{z}\in D^{u}$. 
\end{lem}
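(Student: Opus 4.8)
The plan is to combine the defining property of $I(A;S)$ from Definition \ref{2.4} with the multiplicative action of $\mathbb{N}$ on $\beta S$ from Definition \ref{2.9}, in the same spirit as the proof of Lemma \ref{2.11}(b). The key idea is that one cannot simply shrink an arbitrary solution $\vec{x}\in S^{v}$ (dividing leaves $S$), but one \emph{can} pull a solution down into $(0,\epsilon)$ by scaling an \emph{integer} template by a sufficiently small element of $S$, since an integer multiple of a small element of $S$ is again a small element of $S$.

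Concretely, I would fix $D\in p$ and $\epsilon>0$ and form the dilation set $W=\{n\in\mathbb{N}:n^{-1}D\in p\}$, where $n^{-1}D=\{s\in S:n\cdot s\in D\}$; by Lemma \ref{2.10}, $n\in W$ precisely when $D\in n\cdot p$, and since $1^{-1}D=D\in p$ we at least have $1\in W$. The purpose of $W$ is that if $\vec{x}\in\mathbb{N}^{v}$ is an integer vector whose image $\vec{y}=A\vec{x}$ has all coordinates in $W$, then each $y_{i}^{-1}D\in p$, so the finite intersection $\bigcap_{i<u}y_{i}^{-1}D$ lies in $p$. Because $p$ lies near zero (so that $(0,\delta)\cap S\in p$ for every $\delta>0$), I may then pick $a\in\bigcap_{i<u}y_{i}^{-1}D\cap(0,\delta)\cap S$ with $\delta$ so small that $a\cdot\max_{j}x_{j}<\epsilon$. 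Setting $\vec{z}=a\vec{x}$ gives each $z_{j}=x_{j}\cdot a\in S$ (an integer multiple of $a$) with $0<z_{j}<\epsilon$, while $A\vec{z}=a\vec{y}$ has each coordinate $a\cdot y_{i}=y_{i}\cdot a\in D$; thus $\vec{z}$ witnesses that the set $Q$ can be taken nonempty. Note that it is exactly here that membership of $p$ near zero, i.e. $p\in O^{+}(S)$ as in Remark \ref{2.7}, is used.

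Everything therefore reduces to producing the integer template, namely an $\vec{x}\in\mathbb{N}^{v}$ with $A\vec{x}\in W^{u}$, and this is the step I expect to be the main obstacle. To guarantee genuinely integer images I would pass through the first entries characterization invoked in the proof of Theorem \ref{2.14}: choose $G\in\omega^{v\times m}$ with no zero row and a first entries matrix $B=AG\in\omega^{u\times m}$, so that every $\vec{s}\in\mathbb{N}^{m}$ yields an integer template $\vec{x}=G\vec{s}\in\mathbb{N}^{v}$ with integer image $A\vec{x}=B\vec{s}\in\mathbb{N}^{u}$. The genuinely delicate task that remains is to arrange $B\vec{s}\in W^{u}$, for which one needs $W$ to be large enough in $(\mathbb{N},+)$ to receive a first entries image; equivalently, by Lemma \ref{2.10} and the definition of the $\beta\mathbb{N}$-action, one needs $D\in q\cdot p$ for some $q\in I(A)$, so that $W\in q$ and the template can be extracted from $q$. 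Establishing this largeness of $W$, by leveraging $p\in I(A;S)\cap O^{+}(S)$ together with Lemma \ref{2.11} and the compactness of $I(A)$ from Lemma \ref{2.5}, is where the real work lies; once it is in hand, the rescaling described above finishes the proof.
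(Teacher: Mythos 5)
You have correctly isolated the two ingredients your argument needs --- that $p$ must actually lie in $O^{+}(S)$ (the statement's ``$p\in I(A;S)$'' is indeed a typo for $p\in I_{0}(A;S)$; the paper's own proof invokes $p\in I_{0}(A;S)$) and that an \emph{integer} template $\vec{x}\in\mathbb{N}^{v}$ with $A\vec{x}\in W^{u}$, $W=\{n\in\mathbb{N}:n^{-1}D\in p\}$, would finish the job by rescaling. The rescaling step is fine. But the step you defer is not a technical loose end: it is the entire proof, and as stated it is false, so the approach cannot be completed. Nothing in the hypotheses forces $W$ to be larger than $\{1\}$. For instance, let $A$ be the Schur matrix $\left(\begin{smallmatrix}1&0\\0&1\\1&1\end{smallmatrix}\right)$, let $p$ be an idempotent in $O^{+}(S)$ containing $D=FS(\langle s_k\rangle)$ for a sufficiently lacunary sequence $s_k=4^{-c_k}\to 0$; then $p\in I_{0}(A;S)$, but for each $n\geq 2$ the set $n^{-1}D\cap D=\{s\in D:ns\in D\}$ is finite, hence not in $p$, so $W=\{1\}$, which contains no Schur image $\{x,y,x+y\}$. (The lemma's conclusion still holds for this $A$ --- a small image forces a small preimage --- but your construction produces no witness.) The tools you hope to lean on do not bridge this: Lemma \ref{2.11}(b) runs in the wrong direction, asserting $q\cdot p\in I_{0}(A;S)$ for $q\in I(A)$ rather than that the given $p$, or a given $D\in p$, arises from such a product; and compactness of $I(A)$ offers no finite-intersection structure for a single fixed $D$.

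The paper's proof goes a genuinely different way. After the same reduction to a first entries matrix $B=AG$ with entries from $\omega$, it regards $f(\vec{x})=A\vec{x}$ as a homomorphism $S^{v}\to S^{u}$, extends it to $\tilde{f}:\beta(S^{v})\to(\beta S)^{u}$, shows $\tilde{f}^{-1}[\{\bar{p}\}]\neq\emptyset$ using the finite intersection property of $\{f^{-1}[P^{u}]:P\in p\}$ (this is where $p\in I_{0}(A;S)$ enters), and then uses positivity of the matrix entries to show that any $q\in\tilde{f}^{-1}[\{\bar{p}\}]$ must contain $(S\cap(0,\epsilon))^{v}$ for every $\epsilon>0$: otherwise some coordinate of the image would be bounded away from $0$, contradicting $\bar{p}\in(O^{+}(S))^{u}$. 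The witnessing set $Q$ is then read off from a member of $q$ mapping into $D^{u}$. If you want to rescue a scaling argument, you would have to restrict to ultrafilters already known to be of the form $q\cdot p'$ with $q\in I(A)$ --- which is the content of Lemma \ref{2.11}(b), not of Lemma \ref{3.8}.
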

\begin{proof}
Since $A$ is image partition regular, there exists $m\in \N$, a $v\times m$ matrix $G$ with entries from $\omega$ and no row equal to $\vec{0}$, a $v\times m$ first entries matrix $B$ with entries from $\omega$, and $c\in \N$ such that $c$ is the only first entry of $B$ and $AG=B$, by \cite[Theorem 2.10(f)]{RefHLS02}.
\\ So it suffices to assume that $A$ is a first entries matrix with entries from $\omega$. Let $\vec{p}=(p,p,\ldots,p)\in (\beta S)^{u}$. Define $f:S^{v}\rightarrow S^{u}$ by $f(x)=A\vec{x}$ and note that $f$ is a homomorphism. Let $Y=\{r\in \beta (S^{v}):(S\cap (0,\epsilon))^{v}\in r \ \text{for all} \ \epsilon >0\}.$ Let $\tilde{f}:\beta(s^{v})\rightarrow (\beta S)^{u}$ be the continuous extension of $f$. We claim that $\emptyset \neq f^{-1}[\{\bar{p}\}]\subseteq Y$. To see that $ f^{-1}[\{\bar{p}\}] \neq \emptyset$, note that $\{f^{-1}[B^{u}]:B\in p\}$ has the finite intersection property and since $p\in I_{0}(A;S)$, therefore $\bigcap_{B\in p} cl_{Y} f^{-1}[B^{u}]\neq \emptyset$ and it is routine to check that $\bigcap_{B\in p} cl_{Y} f^{-1}[B^{u}] \subseteq \tilde{f}^{-1}[\{\bar{p}\}]$. Now let $q\in \tilde{f}^{-1}[\{\bar{p}\}]\setminus Y$.  
\\ Then there exists $\epsilon>0$ such that $E=(S\cap(0,\epsilon))^{v}\in q\Rightarrow S^{v}\setminus E \in q\Rightarrow f(S^{v}\setminus E)\in \bar{p} \Rightarrow \pi_{j} (f(S^{v}\setminus E))\in p$. Let $a=\text{min}\{a_{ij}:1\leq i\leq u, \ 1\leq j\leq v\}$ where $A=(a_{ij})_{u\times v}$. Then $\pi_{j} (f(S^{v}\setminus E)) \subseteq S\setminus (0,a\epsilon)$ which is a contradiction. Now let $q\in \tilde{f}^{-1}[\{\bar{p}\}]\subseteq Y$. Then for each $D\in p$ and $\epsilon>0$, there exists $Q\subseteq (S\cap(0.\epsilon))^{v}$ such that $A\vec{z}\in D^{u}$ for all $\vec{z}\in Q$.
\end{proof}
From now onwards we need to view $O^{+}(S)$ as a subset of $\beta \R$. Given $p\in \beta \R$, we let $-p=\{-B:B\in p\}$. Then for all $p,q\in \beta \R$, $(-p)+(-q)=-(p+q)$, which follows from the homomorphism $\gamma :\R\rightarrow \R$ such that $\gamma(x)=-x$. For $p\in \beta \R$ and $n\in \N$, $(-n)\cdot p=-(n\cdot p)$. Also note that $p+(-q)\in O^{+}(S)$ for all $p,q\in O^{+}(S)$. 
\begin{lem}\label{3.9}
Let $S$ be a dense subsemigroup of $((0,\infty),+)$. Let $u,v\in \N$ and let $A$ be a $u\times v$ matrix with entries from $\Q$, which is image partition regular. Then the followings are true.
\\ (1) If $p\in I(A)$ and $r\in I(A)$, then $p+r\in I(A)$. 
\\ (2) If $p\in I(A)$ and $r\in -I(A)$, then $p+r\in I(A)$.
\\ (3) If $p\in I(A)$ and $r\in I(A)$, then $(-p)+r\in -I(A)$.
\\ (4) If $p\in I(A)$ and $r\in -I(A)$, then $(-p)+r\in -I(A)$. 
\end{lem}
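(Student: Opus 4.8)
The plan is to get (1) for free, to put all the work into (2), and then to read off (3) and (4) by symmetry using the reflection $\gamma(x)=-x$. Since $A$ is image partition regular over $\N$, Lemma \ref{2.5}(b) tells us that $I(A)=I(A;\N)$ is a subsemigroup of $(\beta\N,+)$, and this is exactly statement (1): if $p,r\in I(A)$ then $p+r\in I(A)$. So nothing beyond Lemma \ref{2.5}(b) is needed there.

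The heart of the matter is (2), which I would prove by an explicit difference computation. Write $r=-q$ with $q\in I(A)$ and fix an arbitrary $B\in p+r$. Unwinding the definition of $+$ together with $-q=\{-C:C\in q\}$, one checks that $-z+B\in r$ is equivalent to $z-B\in q$, so the set $T=\{z:z-B\in q\}$ belongs to $p$. Since $p\in I(A)$, choose $\vec a$ with $A\vec a\in T^{u}$; then each coordinate satisfies $(A\vec a)_i-B\in q$, whence the finite intersection $W=\bigcap_{i=1}^{u}\big((A\vec a)_i-B\big)$ lies in $q$. Applying $q\in I(A)$, choose $\vec b$ with $A\vec b\in W^{u}$, which forces $(A\vec a)_i-(A\vec b)_j\in B$ for all $i,j$. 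Reading off the diagonal $i=j$ gives $A(\vec a-\vec b)\in B^{u}$. Thus every $B\in p+r$ is an image set of $A$, i.e. $p+r\in I(A)$.

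For (3) and (4) I would invoke the reflection. As recorded just before the lemma, $\gamma:\R\to\R$, $\gamma(x)=-x$, is an additive homomorphism, so its continuous extension $\tilde\gamma:\beta\R\to\beta\R$, $p\mapsto -p$, is an involutive automorphism satisfying $-(p+q)=(-p)+(-q)$ and $\tilde\gamma(I(A))=-I(A)$. Applying $\tilde\gamma$ to statement (1) yields that $-I(A)$ is a subsemigroup, which is (4); applying it to statement (2) yields (3). Concretely, $-\big((-p)+r\big)=p+(-r)$, and this is covered by (2) when $r\in I(A)$ (giving (3)) and by (1) when $r\in-I(A)$ (giving (4)), so both reduce mechanically to the cases already done.

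The one genuinely delicate point is the diagonal vector $\vec a-\vec b$ produced in (2): its entries lie in $\Z$, not in $\N$, so I must be careful about the ambient semigroup in which the conclusion ``$p+r\in I(A)$'' is being asserted. I would make this precise by reading the whole lemma inside $\beta\Z$ (equivalently $\beta\R$), where the relevant property is that every $B$ in the ultrafilter is an image set $A\vec x\in B^{u}$ for some integer vector $\vec x$; the reflection $\tilde\gamma$ carries this set of ultrafilters exactly to its negative, which is precisely what makes the four sign patterns in the statement consistent with one another. Apart from this bookkeeping, the only computations are the two membership checks $T\in p$ and $W\in q$ and the passage from $(A\vec a)_i-(A\vec b)_j\in B$ to the diagonal, both of which are routine.
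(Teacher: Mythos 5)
Your proof is correct, and it supplies content that the paper's own proof does not: the paper disposes of Lemma \ref{3.9} with the single sentence that it is a ``trivial consequence of the above discussion,'' where that discussion records only the reflection identity $-(p+q)=(-p)+(-q)$ coming from the homomorphism $\gamma(x)=-x$. As you observe, that identity does convert (1) into (4) and (2) into (3), and (1) is exactly Lemma \ref{2.5}(b); but (2) is \emph{not} a formal consequence of the reflection identity, and your two-step ultrafilter computation --- pass to $T=\{z:z-B\in q\}\in p$, pick a first witness $\vec a$, intersect the translates $(A\vec a)_i-B$ to get $W\in q$, pick a second witness $\vec b$, and read off the diagonal of $(A\vec a)_i-(A\vec b)_j\in B$ --- is precisely the argument the paper omits. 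The subtlety you flag is genuine and is a defect of the statement rather than of your proof: the witness $\vec a-\vec b$ lies in $\Z^v$ rather than $\N^v$, and under the literal Definition \ref{2.4} (witnesses drawn from the semigroup) part (2) is false as stated; for instance with $A=(1)$, $p$ principal at $1$ and $r$ principal at $-2$, the sum $p+r$ is principal at $-1$, which is not in $I(A;\N)$. So reading $I(A)$ inside $\beta\Z$ (equivalently $\beta\R$) with group-valued witnesses is not optional bookkeeping but a necessary repair, one the paper tacitly assumes when it invokes this lemma in Lemma \ref{3.11} and Theorem \ref{3.14}. Apart from making that convention explicit, nothing further is needed.
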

\begin{proof}
The proof is a trivial consequence of the above discussion.
\end{proof}
\begin{lem}\label{3.10}
Let $S$ be a dense subsemigroup of $((0,\infty),+)$ such that $cS$ is an $IP^{*}$ set near zero. Let $p$ be an idempotent in $(\beta S,+)$ and let $\alpha \in \Q$ with $\alpha>0$. Then $\alpha\cdot p$ is also an idempotent in $(\beta S,+)$. If $p$ is a minimal idempotent then $\alpha\cdot p$ is also a minimal idempotent. Consequently, if $C$ is central near zero in $S$, then so is $\alpha C\cap S$.
\end{lem}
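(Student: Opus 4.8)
The plan is to realize dilation by $\alpha$ as the continuous extension of a semigroup homomorphism and then to exploit its invertibility. Write $\alpha=a/b$ with $a,b\in\mathbb N$, let $l_{\alpha}\colon\mathbb R\to\mathbb R$ be multiplication by $\alpha$, and set $\alpha\cdot p=\widetilde{l_{\alpha}}(p)$, where $\widetilde{l_{\alpha}}\colon\beta S\to\beta\mathbb R$ is the continuous extension of $l_{\alpha}$ (this extends Definition \ref{2.9}, in which $nS\subseteq S$ for $n\in\mathbb N$ because $S$ is closed under addition). Since $l_{\alpha}$ and $l_{1/\alpha}=l_{b/a}$ are homomorphisms of $(\mathbb R,+)$, their continuous extensions $\widetilde{l_{\alpha}}$ and $\widetilde{l_{1/\alpha}}$ are homomorphisms, and $\widetilde{l_{1/\alpha}}\circ\widetilde{l_{\alpha}}$ is the extension of the inclusion $S\hookrightarrow\mathbb R$, hence the identity on $\beta S$. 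As noted before Lemma \ref{2.11}, dilating by a positive scalar keeps small positive elements small and positive, so $\alpha\cdot q\in O^{+}(S)$ whenever $q\in O^{+}(S)$. Throughout I take the idempotents to lie in $O^{+}(S)$, which is the relevant near-zero setting and contains all minimal idempotents of $O^{+}(S)$.

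First I would verify that $\alpha\cdot p\in\beta S$, i.e.\ that $S\in\alpha\cdot p$. By definition this means $\{s\in S:\alpha s\in S\}\in p$, and since $\alpha s\in S$ is equivalent to $as\in bS$, this set equals $a^{-1}(bS)=\{s\in S:as\in bS\}$. By Lemma \ref{2.10}, $a^{-1}(bS)\in p$ iff $bS\in a\cdot p$. Now $a\cdot p$ is an idempotent of $O^{+}(S)$ (the homomorphism $\widetilde{l_{a}}$ carries the idempotent $p$ to an idempotent, and it stays in $O^{+}(S)$), so the hypothesis that $bS$ is $IP^{*}$ near zero forces $bS\in a\cdot p$; hence $\alpha\cdot p\in O^{+}(S)\subseteq\beta S$. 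That $\alpha\cdot p$ is idempotent is then immediate from the homomorphism property, $\alpha\cdot p+\alpha\cdot p=\widetilde{l_{\alpha}}(p+p)=\widetilde{l_{\alpha}}(p)=\alpha\cdot p$, the operation on $\beta S$ being the restriction of that on $\beta\mathbb R$.

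For minimality I would use the inverse dilation. Suppose $p$ is a minimal idempotent of $O^{+}(S)$ and put $q=\alpha\cdot p$; let $f$ be any idempotent of $O^{+}(S)$ with $f\leq q$, that is $f+q=q+f=f$. The map $\widetilde{l_{1/\alpha}}$ is well defined on idempotents of $O^{+}(S)$ by exactly the argument above, now using that $aS$ is $IP^{*}$ near zero, so $g=(1/\alpha)\cdot f\in O^{+}(S)$ is idempotent and, since $\widetilde{l_{1/\alpha}}$ is a homomorphism, applying it to $f+q=q+f=f$ yields $g+p=p+g=g$; moreover $(1/\alpha)\cdot q=(1/\alpha)\cdot(\alpha\cdot p)=p$. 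Thus $g\leq p$, and minimality of $p$ gives $g=p$, whence $f=\alpha\cdot g=\alpha\cdot p=q$. Therefore $q$ has no idempotent strictly below it and is a minimal idempotent of $O^{+}(S)$.

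Finally, for the consequence, let $C$ be central near zero, so $C\in p$ for some minimal idempotent $p\in O^{+}(S)$. By the above, $\alpha\cdot p$ is a minimal idempotent of $O^{+}(S)$, and $\alpha C\cap S\in\alpha\cdot p$ because $\{s\in S:\alpha s\in\alpha C\cap S\}=C\cap\{s\in S:\alpha s\in S\}=C\cap a^{-1}(bS)\in p$, both factors lying in $p$; hence $\alpha C\cap S$ is central near zero. I expect the main obstacle to be the well-definedness step $S\in\alpha\cdot p$: this is precisely where the hypothesis that $cS$ is $IP^{*}$ near zero is indispensable (used for $c=b$ here and for $c=a$ in the minimality step), and it must be applied to the \emph{dilated} idempotent $a\cdot p$ rather than to $p$ itself. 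Once the mutually inverse homomorphisms $\widetilde{l_{\alpha}}$, $\widetilde{l_{1/\alpha}}$ are available on idempotents, the preservation of idempotency and of minimality is essentially formal.
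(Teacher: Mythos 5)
Your proof is correct and follows essentially the same route as the paper, whose ``proof'' is simply a pointer to \cite[Lemma 5.19.2]{RefHS98}: realize dilation by $\alpha=a/b$ as the continuous extension of a homomorphism, use the $IP^{*}$-near-zero hypothesis on $bS$ (applied to the dilated idempotent $a\cdot p$) to get $S\in\alpha\cdot p$, and use the inverse dilation $\widetilde{l_{1/\alpha}}$ to transfer minimality. Your explicit restriction to idempotents of $O^{+}(S)$ is the right reading of the statement, since that is where the $IP^{*}$-near-zero hypothesis and the concluding claim about central sets near zero actually live.
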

\begin{proof}
Follow the proof of \cite[Lemma 5.19.2]{RefHS98}.
\end{proof}
\begin{lem}\label{3.11}
Let $S$ be a dense subsemigroup of $((0,\infty),+)$ and for each $c\in \N$, $cS$ is an $IP^{*}$ set near zero. Let $A$ be a finite image partition regular matrix. Let $k\in \N$ and let $<a_{1},a_{2},\ldots,a_{k}>$ be a sequence in $\Z\setminus \{0\}$ with $a_{1}>0$, $a_{k}>0$. Let $p$ be a minimal idempotent in $O^{+}(S)$. Then $a_{1}\cdot p+a_{2}\cdot p+\cdots+a_{k}\cdot p\in I_{0}(A;S)$ and $\frac{a_{1}}{a_{k}}\cdot p+\frac{a_{2}}{a_{k}}\cdot p+\cdots+\frac{a_{k-1}}{a_{k}}\cdot p+p\in I_{0}(A;S)$.
\end{lem}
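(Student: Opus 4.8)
The plan is to prove both assertions by locating the relevant ultrafilter inside the smallest ideal $K(O^{+}(S))$ and showing that this ideal is contained in $I_{0}(A;S)$. Throughout write $w=a_{1}\cdot p+a_{2}\cdot p+\cdots+a_{k}\cdot p$ and recall $I_{0}(A;S)=I(A;S)\cap O^{+}(S)$ (Remark \ref{2.7}). The first point I would record is that every minimal idempotent of $O^{+}(S)$ already lies in $I_{0}(A;S)$: since $cS$ is $IP^{*}$ near zero it belongs to every idempotent of $O^{+}(S)$, hence to every minimal idempotent, so $cS$ is $central^{*}$ near zero and the hypothesis of Theorem \ref{2.14} is met. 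Given $P\in p$ and $\epsilon>0$, the set $P\cap(0,\epsilon)\cap S$ lies in $p$ (as $p\in O^{+}(S)$) and is therefore central near zero, so Theorem \ref{2.14} yields $\vec{x}\in S^{v}$ with $A\vec{x}\in (P\cap(0,\epsilon))^{u}$; thus $p\in I_{0}(A;S)$.

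Next I would show $w\in K(O^{+}(S))$. Each summand $a_{i}\cdot p$ is a near-zero ultrafilter, lying in $O^{+}(S)$ when $a_{i}>0$ and in $O^{-}(S)=-O^{+}(S)$ when $a_{i}<0$ (here $a_{i}\cdot p=-(|a_{i}|\cdot p)$, as in the discussion before Lemma \ref{3.9}). Since the leftmost coefficient $a_{1}$ is positive, a routine iteration of the facts recorded before Lemma \ref{3.9} (in particular $q+(-q')\in O^{+}(S)$ for $q,q'\in O^{+}(S)$) shows $w\in O^{+}(S)$. Moreover $a_{1}\cdot p$ is a minimal idempotent of $O^{+}(S)$ by Lemma \ref{3.10}, whence by associativity $a_{1}\cdot p+w=(a_{1}\cdot p+a_{1}\cdot p)+(a_{2}\cdot p+\cdots+a_{k}\cdot p)=w$. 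Consequently $w=a_{1}\cdot p+w\in K(O^{+}(S))+O^{+}(S)\subseteq K(O^{+}(S))$, using that $K(O^{+}(S))$ is a two-sided ideal.

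It remains to prove $K(O^{+}(S))\subseteq I_{0}(A;S)$, and this is the step I expect to be the main obstacle. Let $t\in K(O^{+}(S))$; then $t$ lies in a minimal left ideal $L=O^{+}(S)+e$ with $e$ a minimal idempotent, and $e$ is a right identity for $L$, so $t+e=t$ with $e\in I_{0}(A;S)$ by the first paragraph. Hence it suffices to show that $I_{0}(A;S)$ is a left ideal of $O^{+}(S)$, for then $t=t+e\in O^{+}(S)+I_{0}(A;S)\subseteq I_{0}(A;S)$. To verify the left-ideal property I would reduce, exactly as in the proof of Lemma \ref{3.8}, to the case of a first entries matrix with entries from $\omega$ and a single first entry $c$ (\cite[Theorem 2.10(f)]{RefHLS02}); then unwinding $E\in q+s$ (with $q\in O^{+}(S)$ and $s\in I_{0}(A;S)$) produces $\vec{z}\in S^{v}$ and an $x$ with $(A\vec{z})_{i}+x\in E$ for every row $i$, and the task becomes realizing the shifted vector $A\vec{z}+x\vec{1}$ once more as an image of $A$. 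This is precisely where the hypothesis that $cS$ is $IP^{*}$ near zero enters: it lets one take $x\in cS$ and absorb the shift into the variable carrying the first entry $c$. Granting this, $w\in K(O^{+}(S))\subseteq I_{0}(A;S)$, which is the first assertion.

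The second sum is treated by the identical route, since scaling by a positive rational is an additive homomorphism of $\beta\R$ and its leading term $\tfrac{a_{1}}{a_{k}}\cdot p$ is again a minimal idempotent of $O^{+}(S)$ by Lemma \ref{3.10}. Writing $w'=\tfrac{a_{1}}{a_{k}}\cdot p+\cdots+\tfrac{a_{k-1}}{a_{k}}\cdot p+p$, the same near-zero estimate gives $w'\in O^{+}(S)$, the idempotency of $\tfrac{a_{1}}{a_{k}}\cdot p$ gives $\tfrac{a_{1}}{a_{k}}\cdot p+w'=w'$, and hence $w'\in K(O^{+}(S))\subseteq I_{0}(A;S)$ as before.
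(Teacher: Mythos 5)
The decisive step of your argument---the one you yourself flag as the main obstacle---is the inclusion $K(O^{+}(S))\subseteq I_{0}(A;S)$, which you propose to obtain by showing that $I_{0}(A;S)$ is a left ideal of $O^{+}(S)$. This is where the proof breaks. The absorption mechanism you sketch does not work: in a first entries matrix the leading entry $c$ of different rows sits in different \emph{columns} (e.g.\ Schur's matrix, with rows $(1\ 0)$, $(0\ 1)$ and $(1\ 1)$), so a uniform shift of every entry of $A\vec z$ by a single $x\in cS$ cannot be realized by perturbing one coordinate of $\vec z$; adjusting the variable carrying the first entry of one row leaves unshifted every row whose first entry lies in a different column. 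Moreover the inclusion itself should not be expected to hold: already in $\beta\N$ one has $K(\beta\N)\not\subseteq I(A)$ for Schur's matrix, since a syndetic sum-free set (for instance the set of $n$ for which the fractional part of $n\alpha$ lies in $(1/3,2/3)$, $\alpha$ irrational) is piecewise syndetic yet contains no configuration $\{x,y,x+y\}$. So $I(A)$, and likewise $I_{0}(A;S)$, is a subsemigroup but not a left ideal, and the route through $K(O^{+}(S))$ cannot be completed.

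Your first two paragraphs are fine and partly overlap with what the paper needs: minimal idempotents of $O^{+}(S)$ lie in $I_{0}(A;S)$ via Theorem \ref{2.14} (with $IP^{*}$ near zero giving $central^{*}$ near zero), and $w$ does lie in $O^{+}(S)$, indeed in $K(O^{+}(S))$. But the paper's actual argument replaces your faulty step by the sign-tracking closure rules of Lemma \ref{3.9}: each $a_{i}\cdot p=\pm(|a_{i}|\cdot p)$ lies in $I_{0}(A;S)$ or in $-I_{0}(A;S)$ according to the sign of $a_{i}$, because $|a_{i}|\cdot p$ is again a minimal idempotent by Lemma \ref{3.10}; iterating Lemma \ref{3.9} from the right then shows that each partial sum $a_{i}\cdot p+\cdots+a_{k}\cdot p$ lands in $\pm I_{0}(A;S)$ with sign that of $a_{i}$, so the full sum is in $I_{0}(A;S)$ since $a_{1}>0$, and the second sum is handled identically since $a_{1}/a_{k}>0$. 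In short, the argument must pass through the semigroup structure of $I(A)\cup(-I(A))$ rather than through an ideal property that $I_{0}(A;S)$ does not have.
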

\begin{proof}
By Lemma \ref{2.11}(a), for each $i\in \{1,2,\ldots,k\}$, $|\frac{a_{i}}{a_{1}}|\cdot p$ and $|a_{i}|\cdot p$ are minimal idempotents in $O^{+}(S)$. Now by Lemma \ref{3.9}, for $i\in \{1,2,\ldots,k\}$, $a_{i}\cdot p$ is in $I_{0}(A;S)$ or $-I_{0}(A;S)$ according as $a_{i}$ is positive or negative. Conclusion of the lemma follows by repeated application of Lemma \ref{3.9}.
\end{proof}
\begin{lem}\label{3.12}
Let $S$ be a dense subsemigroup of $((0,\infty),+)$ and for each $c\in \N$, $cS$ is an $IP^{*}$ set near zero in $S$. Let $k\in \N$ and let $\vec{a}=<a_{1},a_{2},\ldots,a_{k}>$ be a compressed sequence in $\Z\setminus \{0\}$ with $a_{1}>0$, $a_{k}>0$. Let $p$ be an idempotent in $O^{+}(S)$. Then
\\ (1) $a_{1}\cdot p+a_{2}\cdot p+\cdots+a_{k}\cdot p\in I_{0}(MT(\vec{a});S)$.
\\ (2) $\frac{a_{1}}{a_{k}}\cdot p+\frac{a_{2}}{a_{k}}\cdot p+\cdots+\frac{a_{k-1}}{a_{k}}\cdot p+p\in I_{0}(MT(\vec{a});S)$.
\end{lem}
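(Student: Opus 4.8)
The plan is to invoke Remark~\ref{2.7}, which identifies $I_0(MT(\vec{a});S)$ with $I(MT(\vec{a});S)\cap O^+(S)$, and to verify the two memberships separately for the ultrafilter $q=a_1\cdot p+a_2\cdot p+\cdots+a_k\cdot p$ of part~(1). For the $O^+(S)$ part, note first that $|a_i|\cdot p\in O^+(S)$ for each $i$: by Lemma~\ref{2.10}, $(0,\epsilon)\cap S\in|a_i|\cdot p$ iff $(0,\epsilon/|a_i|)\cap S\in p$, which holds since $p\in O^+(S)$. Since $a_i\cdot p$ equals $|a_i|\cdot p$ or $-(|a_i|\cdot p)$ according to the sign of $a_i$, I would induct on the partial sums $\sigma_j=a_1\cdot p+\cdots+a_j\cdot p$ using the two facts stated just before Lemma~\ref{3.9}: that $O^+(S)$ is closed under $+$, and that $r+(-s)\in O^+(S)$ whenever $r,s\in O^+(S)$. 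The base case $\sigma_1=a_1\cdot p\in O^+(S)$ uses $a_1>0$; the step from $\sigma_j$ to $\sigma_{j+1}=\sigma_j+a_{j+1}\cdot p$ adds either $|a_{j+1}|\cdot p$ or $-(|a_{j+1}|\cdot p)$, and either closure fact keeps the result in $O^+(S)$. Hence $q=\sigma_k\in O^+(S)$, and this is exactly where the requirement $a_1>0$ in Definition~\ref{3.7}(b) enters.

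The main work is to show $q\in I(MT(\vec{a});S)$, that is, for each $B\in q$ there is a sequence $\langle x_n\rangle_{n=1}^\infty$ in $S$ such that $\sum_{i=1}^k a_i\sum_{t\in H_i}x_t\in B$ for all finite nonempty $H_1<H_2<\cdots<H_k$ in $\omega$; this is the ultrafilter form of the Milliken--Taylor Theorem. I would argue by the standard block recursion: write $q=a_1\cdot p+q'$ with $q'=a_2\cdot p+\cdots+a_k\cdot p$, use the idempotent sets $A^\star=\{x\in A:-x+A\in p\}$, and construct $\langle x_n\rangle$ one term at a time, carrying a shifted target $-y+B$ as each block is completed. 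Because some $a_i$ may be negative, the recursion is carried out in $\beta\R$ with the conventions $-p$ and $(-n)\cdot p=-(n\cdot p)$ fixed before Lemma~\ref{3.9}, following the analogous argument of \cite{RefHS18}. To get the near-zero conclusion, at the $n$th step I would intersect every $p$-large set from which $x_n$ is chosen with $(0,\epsilon_n)\cap S\in p$, taking $\epsilon_n\downarrow 0$ rapidly enough that all finite block sums, hence all entries of $MT(\vec{a})\vec{x}$, remain in $(0,\epsilon)$; since $q\in O^+(S)$ lets us take $B\subseteq(0,\epsilon)$, this yields the $I_0$ condition. I expect this recursion --- in particular checking that the sign-mixed sums stay simultaneously positive and below $\epsilon$ --- to be the principal obstacle.

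For part~(2), set $q_2=\frac{a_1}{a_k}\cdot p+\cdots+\frac{a_{k-1}}{a_k}\cdot p+p$. Because the additive map $x\mapsto x/a_k$ on $\R$ extends to a homomorphism of $\beta\R$, it distributes over the sum and gives $q_2=\frac{1}{a_k}\cdot q$; by Lemma~\ref{3.10} each $\frac{|a_i|}{a_k}\cdot p$ is again an idempotent in $O^+(S)$, so the induction of the first paragraph applies verbatim (now anchored by $\frac{a_1}{a_k}>0$) to give $q_2\in O^+(S)$. To transfer the $I(MT(\vec{a});S)$ membership from $q$ to $q_2=\frac{1}{a_k}\cdot q$, I would observe that $a_kS$ is an $IP^*$ set near zero and therefore lies in the idempotent $p$, so in the construction of the first paragraph the witnessing entries $x_n$ may in addition be required to lie in $a_kS$. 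Then for $P\in q_2$ we have $S\cap a_kP\in q$, so part~(1) produces $\vec{x}\in S^\omega$ with entries in $a_kS$ and $MT(\vec{a})\vec{x}\in(a_kP\cap(0,a_k\epsilon))^\omega$; dividing by $a_k$ gives $\frac{1}{a_k}\vec{x}\in S^\omega$ with $MT(\vec{a})(\frac{1}{a_k}\vec{x})\in(P\cap(0,\epsilon))^\omega$, proving $q_2\in I_0(MT(\vec{a});S)$. This last step is precisely where the hypothesis that $a_kS$ is $IP^*$ near zero is used.
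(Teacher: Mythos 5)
Your part (2) is essentially the paper's own argument: both of you write the sum as $\frac{1}{a_k}$ times the ultrafilter from part (1), use the hypothesis that $a_kS$ is an $IP^{*}$ set near zero to place $a_kS$ in the idempotent $p$ and hence obtain a witness $\vec{x}$ with entries in $a_kS$, and then divide by $a_k$. The difference lies in part (1). The paper disposes of it in one line by citing \cite[Theorem 5.7]{RefDN08}, which is precisely the statement that $a_1\cdot p+a_2\cdot p+\cdots+a_k\cdot p\in I_{0}(MT(\vec{a});S)$ for an idempotent $p\in O^{+}(S)$; you instead sketch a self-contained proof. Your verification that the sum lies in $O^{+}(S)$ is fine (it leans only on the closure of $O^{+}(S)$ under $r+s$ and $r+(-s)$ recorded before Lemma \ref{3.9}, anchored by $a_1>0$), but the substantive half --- the block recursion producing $\vec{x}$ so that every sum $\sum_{i=1}^{k}a_i\sum_{t\in H_i}x_t$ lands in a prescribed member of the ultrafilter intersected with $(0,\epsilon)$ --- is exactly what you defer to ``the analogous argument of \cite{RefHS18}'' and yourself flag as ``the principal obstacle.'' That recursion with mixed signs is genuinely delicate (one must simultaneously control positivity and smallness of partial sums built from terms of both signs), and it is the entire content of the cited theorem; as written, your part (1) is a plan rather than a proof. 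If you either carry out that recursion in full or simply quote \cite[Theorem 5.7]{RefDN08} as the paper does, the remainder of your argument goes through and coincides with the paper's.
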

\begin{proof}
Conclusion (1) is an immediate consequence of \cite[Theorem 5.7]{RefDN08}. For conclusion (2), let $q=\frac{a_{1}}{a_{k}}\cdot p+\frac{a_{2}}{a_{k}}\cdot p+\cdots+\frac{a_{k-1}}{a_{k}}\cdot p+p$ and let $Q\in q$. By (1) $a_{k}\cdot q\in I_{0}(MT(\vec{a});S)$. By assumption $a_{k}\cdot S\in p$. Also $a_{k}\cdot Q \in a_{k}\cdot q$. Thus by \cite[Theorem 5.7]{RefDN08}, we may pick $\vec{x}\in (a_{k} S)^{\omega}$ such that $MT(\vec{a})\vec{x}\in (a_kQ)^\omega$ then $\frac{\vec{x}}{a_{k}}\in S^\omega$ and $MT(\vec{a})\frac{\vec{x}}{a_{k}}\in Q^{\omega}$.
\end{proof}
\begin{lem}\label{3.13}
Let $u,v\in \N$ and let $D$ be a $u\times v$ image partition regular matrix. Let $\alpha$ and $\delta$ be positive ordinals, let $A$ be an $\alpha\times v$ matrix, all of whose rows are rows of $D$, and let $B$ be an $\alpha\times \delta$ matrix which is image partition regular near zero over $S$. If $r\in I_{0}(A;S)$ and $p\in I_{0}(B;S)$, then $r+p\in I_{0}(C;S)$ where $C=(A \ B)$.
\end{lem}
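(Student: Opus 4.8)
The plan is to unwind Definition \ref{2.6} for $C=(A\ B)$ and to build the required witness vector in two stages, exactly in the spirit of the proofs of Theorems \ref{3.3} and \ref{3.4}. Fix $P\in r+p$ and $\epsilon>0$; I must produce $\vec{x}\in S^{v+\delta}$ with every entry of $C\vec{x}$ lying in $P\cap(0,\epsilon)$. The engine of the argument is the ultrafilter-sum identity: since $P\in r+p$, the set $U=\{x\in S:-x+P\in p\}$ belongs to $r$.

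First I would feed $U\in r$ into the hypothesis $r\in I_{0}(A;S)$ with tolerance $\epsilon/2$, obtaining $\vec{x}^{(1)}\in S^{v}$ such that every entry of $\vec{y}=A\vec{x}^{(1)}$ lies in $U\cap(0,\epsilon/2)$. The decisive observation---and the only place where the hypothesis that the rows of $A$ are rows of the \emph{finite} matrix $D$ enters---is that $\vec{y}$ takes only finitely many distinct values (at most $u$), even though $\alpha$ may be infinite, since each coordinate of $\vec{y}$ is obtained by applying one of the $u$ rows of $D$ to $\vec{x}^{(1)}$. Listing these distinct values as $y_{1},\ldots,y_{m}$ and noting that each lies in $U$, so that $-y_{i}+P\in p$, I obtain $V=\bigcap_{i=1}^{m}(-y_{i}+P)\in p$ as a \emph{finite} intersection of members of $p$.

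Next I would feed $V\in p$ into the hypothesis $p\in I_{0}(B;S)$ with tolerance $\epsilon/2$, obtaining $\vec{x}^{(2)}\in S^{\delta}$ such that every entry of $\vec{w}=B\vec{x}^{(2)}$ lies in $V\cap(0,\epsilon/2)$. Setting $\vec{x}=\left(\begin{matrix}\vec{x}^{(1)}\\\vec{x}^{(2)}\end{matrix}\right)$ gives $C\vec{x}=A\vec{x}^{(1)}+B\vec{x}^{(2)}=\vec{y}+\vec{w}$. Each entry of this vector has the form $y_{i}+w$ with $w\in V\subseteq -y_{i}+P$, hence lies in $P$; and since $y_{i},w\in(0,\epsilon/2)$ and $S\subseteq(0,\infty)$, the sum lies in $(0,\epsilon)$. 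Thus every entry of $C\vec{x}$ lies in $P\cap(0,\epsilon)$, which is exactly what Definition \ref{2.6} demands, so $r+p\in I_{0}(C;S)$.

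The only genuine obstacle is the finiteness step: the set $V$ must remain a member of the ultrafilter $p$, and this is guaranteed precisely because $A\vec{x}^{(1)}$ has finitely many distinct entries, making the intersection finite. Were the rows of $A$ not drawn from a finite matrix, an infinite intersection would in general be required and the argument would break down for infinite $\alpha$; everything else is the routine splitting of the tolerance $\epsilon$ between the two stages and the elementary verification that positive summands below $\epsilon/2$ sum to something in $(0,\epsilon)$.
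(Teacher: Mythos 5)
Your proof is correct. The paper itself gives no argument at all here: it simply asserts that the lemma ``immediately follows'' from Lemma 3.6 of the cited Hindman--Strauss preprint, which is the analogous statement for $I(\cdot)$ rather than $I_{0}(\cdot\,;S)$ (to make that deduction one would combine the cited lemma with Remark \ref{2.7} and the fact that $O^{+}(S)$ is closed under addition, so that $r+p\in I(C;S)\cap O^{+}(S)=I_{0}(C;S)$). What you have done instead is supply the underlying two-stage ultrafilter argument directly against Definition \ref{2.6}: the identity $P\in r+p\Leftrightarrow\{x:-x+P\in p\}\in r$, the crucial observation that $A\vec{x}^{(1)}$ has at most $u$ distinct entries because every row of $A$ is a row of the finite matrix $D$ (so the intersection $V=\bigcap_{i=1}^{m}(-y_{i}+P)$ is finite and stays in $p$), and the splitting of the tolerance $\epsilon$ between the two stages. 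This is exactly the technique the paper uses in Theorems \ref{3.3} and \ref{3.4}, and your version has the advantage of being self-contained and of making the near-zero bookkeeping explicit, which the paper's one-line citation leaves to the reader. No gaps.
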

\begin{proof}
Proof of this lemma immediately follows from \cite[Lemma 3.6]{RefHS18}.
\end{proof}
\begin{thm}\label{3.14}
Let $S$ be a dense subsemigroup of $((0,\infty),+)$. Let $A$ be a subtracted image partition regular matrix near zero and let $M$ be a Milliken-Taylor matrix near zero. Then $\left(\begin{matrix} A & 0 \\ 0 & M \end{matrix}\right)$ is image partition regular near zero.
\end{thm}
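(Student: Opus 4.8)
The plan is to collapse the statement to a single nonemptiness assertion and then to manufacture one ultrafilter that serves both diagonal blocks simultaneously. Write $N=\left(\begin{matrix} A & 0 \\ 0 & M \end{matrix}\right)$. By Lemma \ref{2.8}(a) it suffices to show $I_{0}(N;S)\neq\emptyset$. First I would observe that, since the two blocks act on disjoint blocks of variables, a vector $\vec z=\left(\begin{matrix}\vec x\\ \vec y\end{matrix}\right)$ gives $N\vec z=\left(\begin{matrix}A\vec x\\ M\vec y\end{matrix}\right)$, so for any $P$ and any $\epsilon>0$ all entries of $N\vec z$ lie in $P\cap(0,\epsilon)$ exactly when this holds separately for $A\vec x$ and for $M\vec y$. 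Hence $I_{0}(N;S)=I_{0}(A;S)\cap I_{0}(M;S)$, and the whole problem reduces to producing a common point $w\in I_{0}(A;S)\cap I_{0}(M;S)$.

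For the construction I would unpack the hypotheses: by Definition \ref{3.2} write $A=(A_{1}\ A_{2})$, where the rows of the $\omega\times v$ matrix $A_{1}$ are rows of a finite image partition regular matrix $D$ and $A_{2}$ is image partition regular near zero, and by Definition \ref{3.7}(b) write $M=MT(\vec a)$ for a compressed sequence $\vec a=\langle a_{1},\ldots,a_{k}\rangle$ with $a_{1}>0$ and $a_{k}>0$. Choose a minimal idempotent $p\in O^{+}(S)$ and set $w=a_{1}\cdot p+a_{2}\cdot p+\cdots+a_{k}\cdot p$. Lemma \ref{3.12}(1) gives $w\in I_{0}(M;S)$ at once, and Lemma \ref{3.11} gives $w\in I_{0}(D;S)$; since every row of $A_{1}$ is a row of $D$, any $\vec x$ witnessing membership for $D$ witnesses it for $A_{1}$, so $I_{0}(D;S)\subseteq I_{0}(A_{1};S)$ and thus $w\in I_{0}(A_{1};S)$.

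The remaining task is to upgrade $w\in I_{0}(A_{1};S)$ to $w\in I_{0}(A;S)=I_{0}((A_{1}\ A_{2});S)$ through Lemma \ref{3.13}, which asks for a factorization $w=r+s$ with $r\in I_{0}(A_{1};S)$ and $s\in I_{0}(A_{2};S)$. Here I would invoke the algebra of the smallest ideal: by Lemma \ref{3.10} the element $a_{k}\cdot p$ is again a minimal idempotent, and since $w=\big(a_{1}\cdot p+\cdots+a_{k-1}\cdot p\big)+a_{k}\cdot p$ lies in the minimal left ideal $O^{+}(S)+a_{k}\cdot p$, the idempotent $a_{k}\cdot p$ is a right identity for $w$, that is $w+a_{k}\cdot p=w$. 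Taking $r=w$ and $s=a_{k}\cdot p$, Lemma \ref{3.13} would then yield $w=w+a_{k}\cdot p\in I_{0}(A;S)$, so $w$ is the desired common point and the theorem follows.

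The hard part will be the one requirement this factorization silently imposes, namely $s=a_{k}\cdot p\in I_{0}(A_{2};S)$; by Lemma \ref{2.11}(a) this is equivalent to $p\in I_{0}(A_{2};S)$. As $p$ is a minimal idempotent of $O^{+}(S)$, every member of $p$ is central near zero, so $p\in I_{0}(A_{2};S)$ is automatic precisely when $A_{2}$ is \emph{centrally} image partition regular near zero; for a merely image-partition-regular-near-zero $A_{2}$ one must instead exhibit \emph{some} minimal idempotent of $O^{+}(S)$ inside the nonempty compact, $\beta\N$-invariant set $I_{0}(A_{2};S)$. Securing such an idempotent, which is where the standing conditions that $cS$ be $IP^{*}$ (or $central^{*}$) near zero, as used in Lemmas \ref{3.10}--\ref{3.12}, are meant to enter, is the crux of the argument; once it is in hand, the steps above close the proof.
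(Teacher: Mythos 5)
Your reduction to showing $I_{0}(N;S)=I_{0}(A;S)\cap I_{0}(M;S)\neq\emptyset$ is correct and is exactly the paper's strategy, and Lemmas \ref{3.11}, \ref{3.12} and \ref{3.13} are the right toolkit. But the gap you flag in your last paragraph is genuine, and it is not a loose end that "closes the proof once it is in hand" --- it is the whole difficulty, and it cannot be closed in the form you have set it up. Your factorization $w=w+a_{k}\cdot p$ requires $a_{k}\cdot p\in I_{0}(A_{2};S)$ with $p$ an idempotent, i.e.\ an idempotent inside $I_{0}(A_{2};S)$. When $A_{2}$ is merely image partition regular near zero, $I_{0}(A_{2};S)$ need not contain \emph{any} idempotent: for instance $A_{2}$ could itself be a nontrivial Milliken--Taylor matrix such as $MT(\langle 1,2\rangle)$, and the corresponding sets $I_{0}(MT(\vec a);S)$ for compressed $\vec a$ of length at least two contain no idempotents (every member of an idempotent contains a finite-sums set, and finite-sums sets can be colored apart from $MT(\vec a)$-images; this is precisely why such matrices fail to be centrally image partition regular). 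So the hypothesis on $A_{2}$ cannot be upgraded to "some minimal idempotent of $O^{+}(S)$ lies in $I_{0}(A_{2};S)$", and your argument stalls exactly where you say it does.

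The paper's way around this is to never require $p$ itself to be an idempotent. It first picks $p\in I_{0}(A_{1};S)\cap I_{0}(A_{2};S)=I_{0}(D;S)\cap I_{0}(A_{2};S)$, which is nonempty by Theorem \ref{2.16} (whose proof produces such a point of the form $q\cdot p$); this uses only that $A_{2}$ is image partition regular near zero. Since $I_{0}(A_{1};S)=I_{0}(D;S)$ is a compact subsemigroup by Lemma \ref{2.8}(b), the left ideal $I_{0}(A_{1};S)+p$ contains an idempotent $q=q'+p$ with $q'\in I_{0}(A_{1};S)$. Setting $r=a_{1}\cdot q+\cdots+a_{k}\cdot q$, Lemmas \ref{3.12} and \ref{3.11} give $r\in I_{0}(M;S)\cap I_{0}(A_{1};S)$, and since $a_{k}\cdot q$ is idempotent, $r=r+a_{k}\cdot q=(r+a_{k}\cdot q')+a_{k}\cdot p$. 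Now the summand handed to Lemma \ref{3.13} on the $A_{2}$ side is $a_{k}\cdot p$, which lies in $I_{0}(A_{2};S)$ by the choice of $p$ together with Lemma \ref{2.11}(a), while $r+a_{k}\cdot q'\in I_{0}(A_{1};S)$ by Lemma \ref{3.9}. That initial choice of $p$ in the intersection, followed by extracting the idempotent $q=q'+p$ from the left ideal it generates, is the idea your proposal is missing. (You are right, incidentally, that Lemmas \ref{3.11} and \ref{3.12} carry an $IP^{*}$-near-zero hypothesis on $cS$ that the statement of Theorem \ref{3.14} omits; that is an issue with the paper, not with your argument.)
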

\begin{proof}
Pick $u,v\in \N$, a $u\times v$ image partition regular matrix $D$, and an $\omega\times v$ matrix $A_{1}$ whose rows are all rows of $D$ and an $\omega\times \omega$ image partition regular matrix near zero $A_{2}$ such that
 $A=(A_{1} \ A_{2})$. Note that $I_{0}(A_{1};S)=I_{0}(D;S)$, so by Theorem \ref{2.16}, pick $p\in I_{0}(D;S)\cap I_{0}(A_{2};S)=I_{0}(A_{1};S)\cap I_{0}(A_{2};S)$. Choose a
  compressed sequence $<a_{i}>_{i=1}^{k}$ in $\Z\setminus \{0\}$ with $a_{1}>0$, $a_{k}>0$ such that $M=MT(\vec{a})$. We shall show that $I_{0}(A;S)\cap I_{0}(M;S)\neq \emptyset $. Now $p\in I_{0}(A_{1};S)$ and so $I_{0}(A_{1};S)+p$ is a left ideal of $I_{0}(A_{1};S)$.
 Pick an idempotent $q\in I_{0}(A_{1};S)+p$. Pick $q'\in I_{0}(A_{1};S)$ such that $q=q'+p$. Let $r=a_{1}\cdot q+a_{2}\cdot p+\cdots+a_{k}\cdot q$. By Lemma \ref{3.12}, $r\in I_{0}(M)$, so it suffices to show that $r\in I_{0}(A;S)$. 
 By Lemma \ref{3.11}, $r\in I_{0}(A_{1};S)$. Note that $a_{k}\cdot q=a_{k}\cdot q' +a_{k}\cdot p$. Also $a_{k}\cdot q+a_{k}\cdot q=a_{k}\cdot (q+q)=a_{k}\cdot q$ so $r=r+a_{k}\cdot q=r+a_{k}\cdot q'+a_{k}\cdot p$. 
By Lemma \ref{3.9}, $r+a_{k}\cdot q' \in I_{0}(A_{1};S)$. Also $a_{1}\cdot p \in I_{0}(A_{2};S)$. 
Then by Lemma \ref{3.13}, $r=(r+a_{k}\cdot q')+a_{k}\cdot p\in I_{0}(A;S)$.
\end{proof}
As a consequence of above theorem, we have the following corollary.
\begin{cor}\label{3.15}
Let $S$ be a dense subsemigroup of $((0,\infty),+)$. Let $A$ be a subtracted centrally image partition regular matrix near zero and $M$ be a Milliken-Taylor matrix near zero. Then $\left(\begin{matrix} A & 0 \\ 0 & M \end{matrix}\right)$ is image partition regular near zero.
\end{cor}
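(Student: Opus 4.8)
The plan is to obtain this as an immediate consequence of Theorem \ref{3.14}, the point being that the hypothesis here is merely a strengthening of the hypothesis there. Writing $A=(A_{1}\ A_{2})$ as in Definition \ref{3.2}, with $A_{2}$ centrally image partition regular near zero over $S$, it suffices to observe that $A_{2}$ is then automatically image partition regular near zero over $S$; for then $A$ qualifies as a subtracted image partition regular matrix near zero, and Theorem \ref{3.14} applies verbatim to give that $\left(\begin{matrix} A & 0 \\ 0 & M \end{matrix}\right)$ is image partition regular near zero.

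The one thing to check is therefore that being centrally image partition regular near zero implies being image partition regular near zero. I would argue as follows. Let $S=\bigcup_{i=1}^{r}E_{i}$ be any finite coloring and let $\delta>0$. Since $O^{+}(S)$ is a compact right topological semigroup, $K(O^{+}(S))$ is nonempty and contains a minimal idempotent $p$. As $S\in p$, some color class $E_{i}$ lies in $p$, so $E_{i}$ is central near zero by Definition \ref{2.2}. Because $p\in O^{+}(S)$ we have $(0,\delta)\cap S\in p$, hence $E_{i}\cap(0,\delta)\in p$; thus $E_{i}\cap(0,\delta)$ is itself central near zero, witnessed by the same $p\in K(O^{+}(S))$.

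Applying the defining property of centrally image partition regularity near zero to this central set then yields $\vec{x}\in S^{\omega}$ with $A_{2}\vec{x}\in\big(E_{i}\cap(0,\delta)\big)^{\omega}$; in particular the entries of $A_{2}\vec{x}$ are monochromatic and lie in $(0,\delta)\subseteq(-\delta,\delta)$, which is exactly image partition regularity near zero. With this implication established, $A$ satisfies the hypotheses of Theorem \ref{3.14} and the corollary follows. I do not expect any genuine obstacle: the whole argument reduces to the observation that a central color class can be localized inside $(0,\delta)$ while remaining central, which rests only on the defining property $(0,\epsilon)\cap S\in p$ of the elements of $O^{+}(S)$.
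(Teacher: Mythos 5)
Your proposal is correct and matches the paper's route exactly: the paper derives Corollary \ref{3.15} as an immediate consequence of Theorem \ref{3.14}, which is precisely your reduction. The only extra content you supply --- that a centrally image partition regular matrix near zero is image partition regular near zero, via a minimal idempotent $p\in K(O^{+}(S))$ and the fact that $E_{i}\cap(0,\delta)\in p$ remains central near zero --- is the routine implication the paper leaves implicit, and your argument for it is sound.
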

\begin{defn}\label{3.16}
Let $S$ be a dense subsemigroup of $((0,\infty),+)$. Then $\mathcal{M}_{0}(S)=\{A$: there exist positive ordinals $\alpha$ and $\delta$ such that $A$ is an $\alpha\times \delta$ admissible matrix and whenever $T$ is a thick set near zero in $S$ and $T$ is finitely colored, there exists $\vec{x}\in S^{\delta}$ such that the entries of $A\vec{x}$ are monochromatic \}. 
\end{defn}
\begin{lem}\label{3.17}
Let $\alpha$ and $\delta$ be positive ordinals, and let $A$ be an admissible $\alpha\times \delta$  matrix. The following statements are equivalent:
\\ (a) $A\in \mathcal{M}_{0}(S)$
\\ (b) for every minimal left ideal $L$ of $O^{+}(S)$, $L\cap I_{0}(A;S)\neq \emptyset$
\\ (c) for every left ideal $L$ of $O^{+}(S)$, $L\cap I_{0}(A;S)\neq \emptyset$.
\end{lem}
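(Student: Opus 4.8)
The plan is to prove the equivalences around the cycle (a) $\Rightarrow$ (b) $\Rightarrow$ (c) $\Rightarrow$ (a), using throughout that $O^{+}(S)$ is a closed subsemigroup of $\beta S$ and hence a compact right topological semigroup — so that every left ideal of $O^{+}(S)$ contains a minimal left ideal and minimal left ideals are closed — together with the fact that $\beta S$ is a compact zero-dimensional Hausdorff space whose basic clopen sets are exactly the $\overline{B}$ with $B\subseteq S$.

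The two easy links I would dispose of first. For (b) $\Rightarrow$ (c), given an arbitrary left ideal $L$ of $O^{+}(S)$, I take a minimal left ideal $L'\subseteq L$; then $L'\cap I_{0}(A;S)\neq\emptyset$ by (b) forces $L\cap I_{0}(A;S)\neq\emptyset$. For (c) $\Rightarrow$ (a), I let $T$ be thick near zero and finitely colored as $T=\bigcup_{i=1}^{r}T_{i}$. By Definition \ref{2.3} there is a left ideal $L$ of $O^{+}(S)$ with $L\subseteq\overline{T}$, so (c) supplies $p\in L\cap I_{0}(A;S)$. Since $p\in\overline{T}$ we have $T\in p$, whence some $T_{i}\in p$; and since $p\in I_{0}(A;S)$, for every $\epsilon>0$ there is $\vec{x}\in S^{\delta}$ with all entries of $A\vec{x}$ lying in $T_{i}\cap(0,\epsilon)$. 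These entries are monochromatic, so $A\in\mathcal{M}_{0}(S)$.

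The substantive direction is (a) $\Rightarrow$ (b), which I would prove contrapositively. Suppose $L$ is a minimal left ideal of $O^{+}(S)$ with $L\cap I_{0}(A;S)=\emptyset$. Because $L\subseteq O^{+}(S)$ and $I_{0}(A;S)=I(A;S)\cap O^{+}(S)$ by Remark \ref{2.7}, this already gives $L\cap I(A;S)=\emptyset$. Now $L$ is compact (a minimal left ideal is closed) and $I(A;S)$ is compact by Lemma \ref{2.5}(a); separating these two disjoint closed sets in the zero-dimensional space $\beta S$ produces $B\subseteq S$ with $L\subseteq\overline{B}$ and $\overline{B}\cap I(A;S)=\emptyset$. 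The inclusion $L\subseteq\overline{B}$ exhibits $B$ as a thick set near zero, again by Definition \ref{2.3}. It then remains to color $B$ so that no image $A\vec{x}$ is monochromatic, and this final step is where the real work lies.

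To extract the bad coloring I would call $P\subseteq S$ an \emph{$A$-set} if there is $\vec{x}\in S^{\delta}$ with all entries of $A\vec{x}$ in $P$; then $A$-sets are upward closed, and by Definition \ref{2.4} the complement of $I(A;S)$ in $\beta S$ is precisely the union of the clopen sets $\overline{Q}$ taken over all non-$A$-sets $Q$. Since $\overline{B}$ is disjoint from $I(A;S)$, it is covered by these clopen sets, so compactness of $\overline{B}$ yields finitely many non-$A$-sets whose union contains $B$; intersecting each with $B$ and disjointifying gives a finite partition of $B$ into non-$A$-sets, i.e. a finite coloring of the thick set $B$ admitting no monochromatic image, so $A\notin\mathcal{M}_{0}(S)$. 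The main obstacle is exactly this compactness step: one must see that the purely topological statement $\overline{B}\cap I(A;S)=\emptyset$ encodes an honest finite partition into non-$A$-sets, and one must check that the reformulation via the upward-closed family of $A$-sets remains valid for infinite $A$, where a single $A$-set constrains countably many coordinates of $A\vec{x}$ simultaneously.
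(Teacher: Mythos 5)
Your proof is correct and follows essentially the same route as the paper: the easy implications (b) $\Rightarrow$ (c) and (c) $\Rightarrow$ (a) are identical, and for (a) $\Rightarrow$ (b) both arguments use compactness of the closed minimal left ideal $L$ to extract a finite family of clopen sets $\overline{Q}$, each $Q$ admitting no image $A\vec{x}\in Q^{\alpha}$, whose union is a thick set near zero carrying a finite coloring with no monochromatic image. Your detour through $I(A;S)$ via Remark \ref{2.7} and the explicit clopen separation of $L$ from $I(A;S)$ are cosmetic variations; if anything they tidy up the step, glossed over in the paper, of converting the failure of $p\in I_{0}(A;S)$ into an actual member $C_{p}\in p$ containing no image.
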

\begin{proof}
(a)$\Rightarrow$ (b). Suppose $A\in \mathcal{M}_{0}(S)$ and let $L$ be a minimal left ideal of $O^{+}(S)$, and suppose that $L\cap I_{0}(A;S)=\emptyset$. For each $p\in L$, pick $C_{p}\in p$ such that there does not exist $\vec{x}\in \N^{\delta}$ such that $A\vec{x}\in C_{p}^{\alpha}$. Then $L\subseteq \bigcup_{p\in L}\bar{C_{p}}$. Minimal left ideals of $O^{+}(S)$ are closed  so pick $F\in \mathcal{P}_{f}(L)$ such that $L\subseteq \bigcup_{p\in F}\bar{C_{p}}$. Let $B=\bigcup_{p\in F}C_{p}$. Since $L\subseteq \bar{B}$, $B$ is thick. Since $A\in \mathcal{M}_{0}$, pick $\vec{x}\in S^{\delta}$ and $p\in F$ such that $A\vec{x}\in C_{p}^{\alpha}$. This is a contradiction.
\\ (b)$\Rightarrow$ (c). Trivial, since every left ideal of $O^{+}(S)$ contains a minimal left ideal.
\\ (c) $\Rightarrow$ (a). Let $B$ be a thick set near zero of $S$ and pick a left ideal $L$ of $O^{+}(S)$ such that $L\subseteq \bar{B}$. Pick $p\in L\cap I_{0}(A;S)$. Let $k\in \N$ and let $B=\bigcup_{i=1}^{k}C_{i}$. Pick $i\in \{1,2,\ldots,k\}$ such that $C_{i}\in p$. Since $p\in I_{0}(A;S)$, pick $\vec{x}\in S^{\delta}$ such that $A\vec{x}\in C_{i}^{\alpha}$.
\end{proof}
\begin{lem}\label{3.18}
Let $S$ be a dense subsemigroup of $((0,\infty),+)$. 
\\ (a) If $A$ is centrally image partition regular matrix near zero over $S$ then $A\in \mathcal{M}_{0}(S)$.
\\ (b) If for each $c\in \N$, $cS$ is an $IP^{*}$ set near zero in $S$ and $A$ be a Milliken-Taylor matrix near zero then $A\in \mathcal{M}_{0}(S)$.
\end{lem}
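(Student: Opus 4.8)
The plan is to prove both parts through the equivalence established in Lemma \ref{3.17}: to show $A\in\mathcal{M}_0(S)$ it suffices to verify condition (b), namely that every minimal left ideal $L$ of $O^+(S)$ meets $I_0(A;S)$. Since $O^+(S)$ is a compact right topological semigroup, each minimal left ideal $L$ is closed and contains an idempotent, which is automatically a minimal idempotent because $L\subseteq K(O^+(S))$. In both parts I will therefore fix an arbitrary minimal left ideal $L$ and exhibit a point of $I_0(A;S)$ inside it, starting from a minimal idempotent of $L$. (Both matrix classes are admissible, so membership in $\mathcal{M}_0(S)$ reduces to the monochromaticity condition.)

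For part (a), let $p\in L$ be a minimal idempotent; I claim $p\in I_0(A;S)$ directly. Fix $P\in p$ and $\epsilon>0$. Because $p\in O^+(S)$ we have $(0,\epsilon)\cap S\in p$, so $C:=P\cap(0,\epsilon)\cap S\in p$. As $C\in p$ and $p$ is a minimal idempotent, $p\in\bar C\cap K(O^+(S))$, hence $C$ is central near zero in $S$ by Definition \ref{2.2}. Since $A$ is centrally image partition regular near zero, Definition \ref{3.1} supplies $\vec x\in S^\omega$ with $A\vec x\in C^\omega\subseteq(P\cap(0,\epsilon))^\omega$. This is exactly the requirement of Definition \ref{2.6}, so $p\in L\cap I_0(A;S)$.

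For part (b), write $A=MT(\vec a)$ with $\vec a=\langle a_1,\dots,a_k\rangle$ compressed, $a_1>0$, $a_k>0$. The key move is to choose the generating idempotent so that the resulting Milliken--Taylor ultrafilter is forced into $L$. Pick a minimal idempotent $e\in L$ and set $p=\frac{1}{a_k}\cdot e$; by Lemma \ref{3.10} (applicable since each $cS$ is $IP^{*}$ near zero) $p$ is again a minimal idempotent of $O^+(S)$ and $a_k\cdot p=e$. By Lemma \ref{3.12}(1) the ultrafilter $q:=a_1\cdot p+a_2\cdot p+\dots+a_k\cdot p$ lies in $I_0(MT(\vec a);S)=I_0(A;S)$. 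It remains to show $q\in L$. Writing $q=\bigl(a_1\cdot p+\dots+a_{k-1}\cdot p\bigr)+a_k\cdot p$, the final summand is $e\in L$, so it suffices that the prefix $a_1\cdot p+\dots+a_{k-1}\cdot p$ lie in $O^+(S)$, for then $q\in O^+(S)+L\subseteq L$.

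The prefix membership is where the sign bookkeeping for $O^+(S)$ recorded before Lemma \ref{3.9} does the work. Each term $a_i\cdot p$ lies in $O^+(S)$ when $a_i>0$ and in $-O^+(S)$ when $a_i<0$, and the relations $(-p)+(-r)=-(p+r)$ together with $p+(-r)\in O^+(S)$ for $p,r\in O^+(S)$ show that for $x,y\in O^+(S)\cup(-O^+(S))$ the sum $x+y$ stays on the same side as its first term $x$. By associativity the sum $a_1\cdot p+\dots+a_{k-1}\cdot p$ therefore stays on the side of $a_1\cdot p$, and since $a_1>0$ this side is $O^+(S)$; when $k=1$ the prefix is empty and $q=e\in L$ already. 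Hence $q\in L\cap I_0(A;S)$. I expect the genuine obstacle to be exactly this last point --- steering the canonical Milliken--Taylor ultrafilter into an arbitrary prescribed minimal left ideal --- which is dissolved by the fractional-action trick $p=\frac{1}{a_k}\cdot e$ (placing the rightmost idempotent where we want it) combined with the sign rule that keeps the prefix positive because $a_1>0$.
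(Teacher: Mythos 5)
Your proof is correct and follows essentially the same route as the paper's: part (a) picks a minimal idempotent in $L$ and observes it lies in $I_0(A;S)$ (you supply the centrality-near-zero verification the paper leaves implicit), and part (b) uses Lemma \ref{3.10} to pull the idempotent back by $\tfrac{1}{a_k}$, Lemma \ref{3.12}(1) for membership in $I_0(MT(\vec a);S)$, and the positivity of $a_1$ to place the prefix sum in $O^{+}(S)$ so that the whole sum lands in $L$. The only difference is that you spell out the sign bookkeeping for the prefix, which the paper asserts in one line.
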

\begin{proof}
(a) Suppose $A$ is centrally image partition regular near zero over $S$. Let $L$ be a minimal left ideal of $O^{+}(S)$ and pick a minimal idempotent $p\in L$. Then $p\in L\cap I_{0}(A;S)$. Therefore by Lemma \ref{3.17}, $A\in \mathcal{M}_{0}(S)$.
\\ (b) Suppose we have $k\in \N$ and a compressed sequence $\vec{a}=<a_{1},a_{2},\ldots,a_{k}>$ in $\Z\setminus \{0\}$ with $a_{1}>0$, $a_{k}>0$ such that $A=MT(\vec{a})$. Let $L$ be a left ideal of $O^{+}(S)$ and let $q$ be an idempotent in $L$. By Lemma \ref{3.10}, there is an idempotent $p\in O^{+}(S)$ such that $a_{k}\cdot p=q$. Since $a_{1}>0$, $a_{1}\cdot p+a_{2}\cdot p+\cdots+a_{k-1}\cdot p\in
O^{+}(S)$ and thus $a_{1}\cdot p+a_{2}\cdot p+\cdots+a_{k}\cdot p\in L$. Now by Lemma \ref{3.12}, $a_{1}\cdot p+a_{2}\cdot p+\cdots+a_{k}\cdot p\in I_{0}(A;S)\cap L$. Hence $A\in \mathcal{M}_{0}(S)$.
\end{proof}
\begin{thm}\label{3.19}
Let $S$ be a dense subsemigroup of $((0,\infty),+)$. Let $B$ be an image partition regular matrix near zero over $S$ and let $M\in \mathcal{M}_{0}(S)$. Then $\left(\begin{matrix} \bar{1} & B & 0 \\ \bar{0} & 0 & M \end{matrix}\right)$ is image partition regular near zero over $S$.
\end{thm}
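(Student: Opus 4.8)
The plan is first to rewrite the matrix as a diagonal sum and then to reduce the whole statement to the existence of a single ultrafilter lying in two sets at once. Writing the unknown as $\left(\begin{smallmatrix}x_0\\ \vec y\\ \vec z\end{smallmatrix}\right)$ with $x_0\in S$, with $\vec y$ the variables of $B$ and $\vec z$ the variables of $M$, the top block contributes $x_0\bar{1}+B\vec y$ and the bottom block contributes $M\vec z$, so the displayed matrix is exactly the diagonal sum of $C=(\bar{1}\ B)$ and $M$. Consequently it suffices to find $q\in I_0(C;S)\cap I_0(M;S)$: given a finite colouring $S=\bigcup_{i=1}^k E_i$ and $\epsilon>0$, choose $i$ with $E_i\in q$, use $q\in I_0(C;S)$ to solve the top block with entries in $E_i\cap(0,\epsilon)$ and $q\in I_0(M;S)$ to solve the bottom block with entries in $E_i\cap(0,\epsilon)$, and stack the two solutions, exactly as in the proof of Theorem \ref{2.16}.

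To produce such a $q$, I would exploit the leading column of ones. The one-by-one matrix $D=(1)$ is image partition regular, every row of $\bar{1}$ is the single row of $D$, and directly from Definition \ref{2.6} one sees $O^+(S)\subseteq I_0(\bar{1};S)$: for $r\in O^+(S)$, $P\in r$ and $\epsilon>0$, the set $P\cap(0,\epsilon)\cap S$ lies in $r$, hence is nonempty, and any of its points serves as the single coordinate $x_0$. Since $B$ is image partition regular near zero, Lemma \ref{2.8}(a) supplies some $p\in I_0(B;S)$, and then Lemma \ref{3.13}, with $\bar{1}$ playing the role of the matrix called $A$ there, gives $r+p\in I_0(C;S)$ for every $r\in O^+(S)$. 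Thus $L:=O^+(S)+p\subseteq I_0(C;S)$.

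The point of this construction is that $L$ is a left ideal of $O^+(S)$: because $O^+(S)$ is a subsemigroup of $\beta S$ and $p\in I_0(B;S)\subseteq O^+(S)$, we have $L\subseteq O^+(S)$ and $O^+(S)+L=(O^+(S)+O^+(S))+p\subseteq O^+(S)+p=L$. Now $M\in\mathcal{M}_0(S)$, so Lemma \ref{3.17} applies to this $L$ and furnishes $q\in L\cap I_0(M;S)$. Since $q\in L\subseteq I_0(C;S)$ and $q\in I_0(M;S)$, this $q$ is the ultrafilter required in the first paragraph, and the proof is complete.

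I expect the only genuine content to be the middle step: recognising that the column of ones turns the top block into the format of Lemma \ref{3.13} (with the trivial finite image partition regular matrix $(1)$) and that the resulting set $O^+(S)+p$ is a left ideal of $O^+(S)$ sitting inside $I_0(C;S)$. This is precisely the shape of hypothesis that the characterisation of $\mathcal{M}_0(S)$ in Lemma \ref{3.17} is built to consume; once it is in place, admissibility of the big matrix and the extraction of a monochromatic near-zero image are routine.
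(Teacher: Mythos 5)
Your proof is correct and follows essentially the same route as the paper: both arguments pick $p\in I_{0}(B;S)$, form the left ideal $L=O^{+}(S)+p$, use Lemma \ref{3.17} to produce $q\in L\cap I_{0}(M;S)$, and apply Lemma \ref{3.13} with $r\in O^{+}(S)=I_{0}(\bar{1};S)$ to conclude $q\in I_{0}(C;S)\cap I_{0}(M;S)$ for $C=(\bar{1}\ \, B)$, whence the diagonal-sum argument of Theorem \ref{2.16} finishes. Your write-up is merely more explicit than the paper's (which contains typos such as ``$I_{0}(A;S)$'' for $I_{0}(B;S)$) about why $O^{+}(S)\subseteq I_{0}(\bar{1};S)$, why $L$ is a left ideal, and how the final ultrafilter yields the monochromatic image in $(0,\epsilon)$.
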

\begin{proof}
Let $\alpha$ and $\delta$ be positive ordinals and $B$ be an $\alpha\times \delta$ image partition regular matrix near zero over $S$. Pick $p\in I_{0}(A;S)$ and let $L=O^{+}(S)+p$. Pick $q\in L\cap I_{0}(M;S)$ and pick $r\in O^{+}(S)$ such that $q=r+p$. Then $r\in O^{+}(S)=I_{0}(I;S)$. So by Lemma \ref{3.13}, $q\in I_{0}(C;S)$ where $C=(I \ B)$ as required.
\end{proof}
\begin{thm}\label{3.20}
Let $S$ be a dense subsemigroup of $((0,\infty),+)$ and let $B, M\in \mathcal{M}_{0}(S)$. Then $\left(\begin{matrix} \bar{1} & B & 0 \\ \bar{0} & 0 & M \end{matrix}\right)\in \mathcal{M}_{0}$.
\end{thm}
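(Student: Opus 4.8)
The plan is to use the left-ideal reformulation of $\mathcal{M}_{0}(S)$ provided by Lemma \ref{3.17} and to mirror the argument of Theorem \ref{3.19}, the one genuinely new difficulty being that the witnessing ultrafilter must be kept inside a prescribed left ideal. Write $C=\left(\begin{matrix} \bar{1} & B & 0 \\ \bar{0} & 0 & M \end{matrix}\right)$ and note that $C$ is the diagonal sum of its top block $C'=(\bar{1}\ B)$, acting on one extra variable $w$ together with the variables of $B$, and of $M$, acting on a disjoint block of variables. Here the single column $\bar{1}$ plays the role of the $1\times 1$ identity matrix $I$, so that every row of $\bar 1$ is the single row of the trivially image partition regular matrix $(1)$, and one checks directly that $I_{0}(I;S)=O^{+}(S)$ (if each member of an ultrafilter $r$ meets every $(0,\epsilon)$, then $(0,\epsilon)\in r$ for every $\epsilon>0$). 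Because the variable blocks of $C'$ and of $M$ are disjoint, the first thing I would record is the elementary inclusion $I_{0}(C';S)\cap I_{0}(M;S)\subseteq I_{0}(C;S)$: given $P$ in such an ultrafilter and $\epsilon>0$, solve the top block and the bottom block separately into $P\cap(0,\epsilon)$ and concatenate the solutions. Admissibility of $C$ is immediate, since each top row carries the entry $1$ in the $w$-column and each bottom row is a row of the admissible matrix $M$.

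By Lemma \ref{3.17} it suffices to show that $L\cap I_{0}(C;S)\neq\emptyset$ for every left ideal $L$ of $O^{+}(S)$. So I would fix such an $L$. Since $B\in\mathcal{M}_{0}(S)$, Lemma \ref{3.17} furnishes $p\in L\cap I_{0}(B;S)$; in particular $I_{0}(B;S)\neq\emptyset$, so $B$ is image partition regular near zero over $S$ and the hypotheses of Lemma \ref{3.13} are met. I would then pass to the principal left ideal $L'=O^{+}(S)+p$. As $p\in L$ and $L$ is a left ideal, $L'\subseteq L$, and $L'$ is itself a left ideal of $O^{+}(S)$. Now applying $M\in\mathcal{M}_{0}(S)$ together with Lemma \ref{3.17} to $L'$ yields $q\in L'\cap I_{0}(M;S)$, and automatically $q\in L'\subseteq L$.

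It remains to check $q\in I_{0}(C';S)$. Because $q\in L'=O^{+}(S)+p$, I may write $q=r+p$ with $r\in O^{+}(S)=I_{0}(I;S)$. Lemma \ref{3.13}, applied with $D=(1)$, with the column $\bar{1}$ in the role of its matrix all of whose rows are rows of $D$, and with our $B$ as its near-zero block, then gives $q=r+p\in I_{0}((\bar{1}\ B);S)=I_{0}(C';S)$. Combining this with $q\in I_{0}(M;S)$ and the diagonal-sum inclusion from the first paragraph, I obtain $q\in I_{0}(C;S)$, hence $q\in L\cap I_{0}(C;S)$. Since $L$ was an arbitrary left ideal, Lemma \ref{3.17} yields $C\in\mathcal{M}_{0}(S)$.

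The single point that requires care—and where this proof departs from Theorem \ref{3.19}—is the coordination of the two ideal-hitting steps: one cannot simply choose an arbitrary $p\in I_{0}(B;S)$, since the eventual $q$ is constrained to lie in the given $L$. The device of first selecting $p$ \emph{inside} $L$ (legitimate precisely because $B\in\mathcal{M}_{0}(S)$) and only then shrinking to the sub-left-ideal $L'=O^{+}(S)+p\subseteq L$ before invoking $M\in\mathcal{M}_{0}(S)$ is exactly what simultaneously guarantees $q\in L$ and the representation $q=r+p$ that feeds Lemma \ref{3.13}. Everything else is the routine bookkeeping already carried out in Theorem \ref{3.19}.
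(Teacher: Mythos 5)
Your proof is correct and takes essentially the same route as the paper's: both reduce the claim to hitting left ideals via Lemma \ref{3.17}, write $q=r+p$ with $r\in O^{+}(S)=I_{0}(\bar 1;S)$ and $p\in I_{0}(B;S)$, and invoke Lemma \ref{3.13} with $D=(1)$ to place $q$ in $I_{0}((\bar 1\ B);S)$. The only (harmless) difference is that the paper fixes a \emph{minimal} left ideal $L$ so that $L=O^{+}(S)+p$ holds outright, whereas you handle an arbitrary $L$ by first shrinking to the principal sub-ideal $O^{+}(S)+p$; your write-up is also more explicit than the paper's about the final diagonal-sum inclusion and the admissibility check.
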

\begin{proof}
Let $L$ be a minimal left ideal of $O^{+}(S)$. Pick $q\in L\cap I_{0}(M;S)$ and $p\in L\cap I_{0}(B;S)$. Then $q\in L=O^{+}(S)+p$ so pick $r\in O^{+}(S)$ such that $q=r+p$. By Lemma \ref{3.6}, $q\in I_{0}(C)$ where $C=(\bar{1} \ B)$.
\end{proof}
As a consequence of the above theorem, one can deduce the following corollary.
\begin{cor}\label{3.21}
Let $S$ be a dense subsemigroup of $((0,\infty),+)$, let $n\in \N$ and for each $i\in \{1,2,\ldots,n\}$ let $M_{i}\in \mathcal{M}_{0}(S)$.  Then \\ $
\left(\begin{matrix} \bar{1} & M_{1} & 0 & 0 \cdots   0 & 0 & 0 \\
                     0 & 0 & \bar{1} & M_{2}  \cdots  0 & 0 & 0 \\
                     \vdots & \vdots & \vdots & \vdots \ \ \ \ \   \vdots & \vdots & \vdots \\
                     0 & 0 & 0 & 0 \cdots  \bar{1} & M_{n-1} & 0 \\
                     0 & 0 & 0 & 0 \cdots  0 & 0 & M_{n}
\end{matrix}\right)
                                       \in \mathcal{M}_{0}(S)$.

\end{cor}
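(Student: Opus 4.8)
The plan is to prove this by induction on $n$, with Theorem~\ref{3.20} serving as the engine: that theorem says precisely that $\mathcal{M}_{0}(S)$ is closed under the two-block stacking $\left(\begin{smallmatrix}\bar 1 & B & 0\\ \bar 0 & 0 & M\end{smallmatrix}\right)$ whenever $B,M\in\mathcal{M}_{0}(S)$. First I would record that the displayed staircase matrix has a recursive block structure. For $1\le i\le n$ let $N^{(i)}$ be the staircase matrix formed from the blocks $M_{i},M_{i+1},\ldots,M_{n}$ in the same pattern, so that $N^{(1)}$ is the matrix in the statement and $N^{(n)}=M_{n}$. The key observation is that for each $i<n$ we have $N^{(i)}=\left(\begin{smallmatrix}\bar 1 & M_{i} & 0\\ \bar 0 & 0 & N^{(i+1)}\end{smallmatrix}\right)$: the column of $\bar 1$'s in front of $M_{i+1}$, together with all the later blocks, is absorbed into the single lower-right block $N^{(i+1)}$, so peeling off the top block $M_{i}$ reproduces exactly the two-block shape of Theorem~\ref{3.20}.

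With this decomposition the induction is immediate. The base case $N^{(n)}=M_{n}\in\mathcal{M}_{0}(S)$ holds by hypothesis. For the inductive step, assume $N^{(i+1)}\in\mathcal{M}_{0}(S)$; since $M_{i}\in\mathcal{M}_{0}(S)$ as well, Theorem~\ref{3.20} applied with $B=M_{i}$ and $M=N^{(i+1)}$ gives $N^{(i)}=\left(\begin{smallmatrix}\bar 1 & M_{i} & 0\\ \bar 0 & 0 & N^{(i+1)}\end{smallmatrix}\right)\in\mathcal{M}_{0}(S)$. Running $i$ from $n-1$ down to $1$ then yields $N^{(1)}\in\mathcal{M}_{0}(S)$, which is the assertion of the corollary.

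I expect the only delicate point to be bookkeeping: verifying that the outer stack $\left(\begin{smallmatrix}\bar 1 & M_{i} & 0\\ \bar 0 & 0 & N^{(i+1)}\end{smallmatrix}\right)$ agrees, row by row and column by column, with $N^{(i)}$, in particular that the heights of the column $\bar 1$ and of the zero blocks match the row counts of $M_{i}$ and of $N^{(i+1)}$. This is routine and is already implicit in the proofs of Theorems~\ref{3.19} and~\ref{3.20}; no new combinatorial or algebraic ingredient is required, and the corollary is simply the $n$-fold iteration of Theorem~\ref{3.20}.
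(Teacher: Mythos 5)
Your proof is correct and follows exactly the paper's own route: the paper's entire proof of Corollary~\ref{3.21} is ``Apply Theorem~\ref{3.20} $n$-times,'' and your downward induction with the decomposition $N^{(i)}=\left(\begin{smallmatrix}\bar 1 & M_{i} & 0\\ \bar 0 & 0 & N^{(i+1)}\end{smallmatrix}\right)$ is just that iteration made explicit. No further comment is needed.
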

\begin{proof}
Apply Theorem \ref{3.20} $n$-times.
\end{proof}
\begin{thm}\label{3.22}
Let $S$ be a dense subsemigroup of $((0,\infty),+)$ such that for each $c\in \N$, $cS$ is an $IP^{*}$ set near zero in $S$. Let $A$ and $M$ be subtracted Milliken-Taylor matrix near zero over $S$ and a Milliken-Taylor matrix near zero respectively. Then $\left(\begin{matrix} A & 0 \\ 0 & M \end{matrix}\right)\in \mathcal{M}_{0}(S)$.
\end{thm}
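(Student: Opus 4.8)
The plan is to pass to the near-zero ultrafilter semigroups via Lemma \ref{3.17} and then, inside an arbitrary minimal left ideal, build a single ultrafilter that simultaneously witnesses $A$ and $M$. Write $A=(A_{1}\ A_{2})$ as in Definition \ref{3.2}: the rows of the $\omega\times v$ matrix $A_{1}$ are rows of a finite image partition regular $u\times v$ matrix $D$, and $A_{2}=MT(\vec{b})$ for a compressed $\vec{b}=\langle b_{1},\ldots,b_{\ell}\rangle$ with $b_{1}>0$, $b_{\ell}>0$; write $M=MT(\vec{a})$ with $\vec{a}=\langle a_{1},\ldots,a_{k}\rangle$ compressed, $a_{1}>0$, $a_{k}>0$. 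Put $I_{1}=I_{0}(A_{1};S)=I_{0}(D;S)$, a compact subsemigroup of $O^{+}(S)$ by Lemma \ref{2.8}(b), and $I_{2}=I_{0}(A_{2};S)=I_{0}(MT(\vec{b});S)$. Since the two diagonal blocks use disjoint variables, $I_{0}\left(\left(\begin{matrix}A&0\\0&M\end{matrix}\right);S\right)=I_{0}(A;S)\cap I_{0}(M;S)$, and the diagonal sum is admissible, so by Lemma \ref{3.17} it is enough to show that every minimal left ideal $L$ of $O^{+}(S)$ meets $I_{0}(A;S)\cap I_{0}(M;S)$.

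So I would fix a minimal left ideal $L$ and a minimal idempotent $w\in L$. Using that $cS$ is $IP^{*}$ near zero for every $c$, I would apply Lemma \ref{3.10} to set $e=\frac{1}{a_{k}b_{\ell}}\cdot w$, a minimal idempotent of $O^{+}(S)$ with $(a_{k}b_{\ell})\cdot e=w$, and then put $p=b_{1}\cdot e+\cdots+b_{\ell}\cdot e$. Lemma \ref{3.12} gives $p\in I_{2}$, and Lemma \ref{3.11} applied to $D$ and the minimal idempotent $e$ gives $p\in I_{1}$. The whole point of the scaling is that $a_{k}\cdot p=(a_{k}b_{1})\cdot e+\cdots+(a_{k}b_{\ell})\cdot e$ has last summand $(a_{k}b_{\ell})\cdot e=w\in L$ while the earlier summands lie in $O^{+}(S)$ (the leading coefficient $a_{k}b_{1}$ is positive); hence $a_{k}\cdot p\in L$ because $L$ is a left ideal. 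Thus $p\in I_{1}\cap I_{2}$ with $a_{k}\cdot p\in L$.

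Next, exactly as in Theorem \ref{3.14}, I would pick an idempotent $q$ in the compact left ideal $I_{1}+p$ of $I_{1}$ and write $q=q'+p$ with $q'\in I_{1}$; then $a_{k}\cdot q=a_{k}\cdot q'+a_{k}\cdot p\in L$, since $a_{k}\cdot q'\in O^{+}(S)$ and $a_{k}\cdot p\in L$. Setting $r=a_{1}\cdot q+\cdots+a_{k}\cdot q$, I would verify four memberships: $r\in I_{0}(M;S)$ by Lemma \ref{3.12}; $r\in I_{1}$ as in Theorem \ref{3.14} (via Lemmas \ref{2.11}(a) and \ref{3.9}); $r\in L$ because its last summand $a_{k}\cdot q$ lies in $L$ and the rest in $O^{+}(S)$; and, using that $a_{k}\cdot q$ is idempotent, $r=r+a_{k}\cdot q=(r+a_{k}\cdot q')+a_{k}\cdot p$, where $r+a_{k}\cdot q'\in I_{1}$ (Lemmas \ref{2.11}(a), \ref{3.9}) and $a_{k}\cdot p\in I_{2}$ (Lemma \ref{2.11}(a)), so Lemma \ref{3.13} yields $r\in I_{0}((A_{1}\ A_{2});S)=I_{0}(A;S)$. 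This produces $r\in L\cap I_{0}(A;S)\cap I_{0}(M;S)$, and Lemma \ref{3.17} then gives the conclusion.

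The main obstacle, and the only place the argument goes beyond Theorem \ref{3.14}, is steering the output ultrafilter into the prescribed minimal left ideal $L$. I expect to overcome it by marrying the ``idempotent in $I_{1}+p$'' device of Theorem \ref{3.14} with the rescaling trick of Lemma \ref{3.18}(b): rescaling a minimal idempotent of $L$ by $\tfrac{1}{a_{k}b_{\ell}}$ in Lemma \ref{3.10} manufactures a seed $e$ whose Milliken--Taylor $\vec{b}$-sum $p$ satisfies $a_{k}\cdot p\in L$, and this single condition then propagates through $q=q'+p$ and the $\vec{a}$-sum to place $r$ in $L$ without disturbing its membership in $I_{0}(A;S)$ or $I_{0}(M;S)$.
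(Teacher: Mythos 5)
Your argument is correct, but it reaches a point of $L\cap I_{0}(A;S)\cap I_{0}(M;S)$ by a genuinely different route from the paper's. The paper takes an idempotent $p$ in the given minimal left ideal $L$ and uses the \emph{normalized} Milliken--Taylor sums of Lemma \ref{3.12}(2) for both $A_{2}$ and $M$ (i.e.\ $q=\frac{a_{1}}{a_{k}}\cdot p+\cdots+\frac{a_{k-1}}{a_{k}}\cdot p+p$ and its analogue $q'$): since each ends in $p$ itself, both lie in $L=O^{+}(S)+p$ with no rescaling, and both lie in $I_{0}(A_{1};S)=I_{0}(D;S)$ by Lemma \ref{3.11}. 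The subtraction needed for Lemma \ref{3.13} is then obtained structurally: $L'=L\cap I_{0}(A_{1};S)$ is a minimal left ideal of $I_{0}(A_{1};S)$ by \cite[Theorem 1.65]{RefHS98}, so $L'=I_{0}(A_{1};S)+q$ and one writes $q'=r+q$ with $r\in I_{0}(A_{1};S)$, whence $q'\in I_{0}(A;S)$ by Lemma \ref{3.13}; the single ultrafilter $q'$ then witnesses everything. You instead use the unnormalized sums of Lemma \ref{3.12}(1), which forces you to pre-scale the seed idempotent by $\frac{1}{a_{k}b_{\ell}}$ (the Lemma \ref{3.18}(b) device) so that $a_{k}\cdot p$ lands in $L$, and then to rerun the Theorem \ref{3.14} machinery (an idempotent $q$ in $I_{1}+p$, the identity $r=r+a_{k}\cdot q=(r+a_{k}\cdot q')+a_{k}\cdot p$) to produce the decomposition for Lemma \ref{3.13} by hand. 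Both proofs rest on the same toolkit; the paper's is shorter because normalization makes membership in $L$ automatic and minimality of $L'$ supplies the subtraction for free, while yours trades that appeal to the structure of $L\cap I_{0}(A_{1};S)$ for the explicit idempotent construction of Theorem \ref{3.14}. One small point to tighten: where you justify $a_{k}\cdot p\in L$ and $r\in L$ by saying the earlier \emph{summands} lie in $O^{+}(S)$, note that for a compressed sequence with negative middle entries those individual summands lie in $-O^{+}(S)$; what positivity of the leading coefficient actually gives (and what the paper also uses in Lemma \ref{3.18}(b)) is that the initial \emph{partial sum} lies in $O^{+}(S)$, which is what you need before concluding via $O^{+}(S)+L\subseteq L$.
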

\begin{proof}
Pick $u,v\in \N$, a $u\times v$ image partition regular matrix $D$, an $\omega\times v$ matrix $A_{1}$ whose rows are all rows of $D$ and an Milliken-Taylor matrix near zero $A_{2}$ such that $A=(A_{1} \ A_{2})$. 
Note that $I_{0}(A_{1};S)=I_{0}(D;S)$. Let $k,m\in \N$ and let $\vec{a}=<a_{1},a_{2},\ldots,a_{k}>$  and $\vec{b}=<b_{1},b_{2},\ldots,b_{m}>$ be compressed sequences in $\Z\setminus \{0\}$ with $a_{1}>0$, $b_{1}>0$, $a_{k}>0$, $b_{m}>0$ such that $A_{2}=MT(\vec{a})$ and $M=MT(\vec{b})$. Let $L$ be a minimal left ideal of $O^{+}(S)$ and let $p$ be an idempotent in $L$. Since $D$ is finite and image partition regular, $p\in I_{0}(D;S)=I_{0}(A_{1};S)$. 
By \cite[Exercise 4.38]{RefHS98} $L=O^{+}(S)+p$. Let $q=\frac{a_{1}}{a_{k}}\cdot p+\frac{a_{2}}{a_{k}}\cdot p+\cdots+\frac{a_{k-1}}{a_{k}}\cdot p+p$ and let $q'=\frac{b_{1}}{b_{m}}\cdot p+\frac{b_{2}}{b_{m}}\cdot p+\cdots+\frac{b_{m-1}}{b_{m}}\cdot p+p$. Then $q$ and $q'$ are in $L$. By Lemma \ref{3.11}, $q, q'\in I_{0}(D;S)=I_{0}(A_{1};S)$. By Lemma \ref{3.12}, $q\in I_{0}(MT(\vec{a});S)$ and $q'\in I_{0}(MT(\vec{b});S)$. Let $L'=L\cap I_{0}(A_{1};S)$. Then $q, q'\in L'$. By \cite[Theorem 1.65]{RefHS98} $L'$ is a minimal left ideal of $I_{0}(A_{1};S)$ so $L'=I_{0}(A_{1};S)+q.$ Pick $r\in I_{0}(A_{1};S)$ such that $q'=r+q$. By Lemma \ref{3.13}, $q'\in I_{0}(A;S)$, so $L\cap I_{0}(A;S)\cap I_{0}(M;S)\neq \emptyset$  and hence $\left(\begin{matrix} A & 0 \\ 0 & M \end{matrix}\right)\in \mathcal{M}_{0}(S)$.
\end{proof}Apply Theorem \ref{3.20} n-times.
\begin{thm}\label{3.23}
Let $S$ be a dense subsemigroup of $((0,\infty),+)$ such that for each $c\in \N$, $cS$ is an $IP^{*}$ set near zero in $S$. Let $A$ and $M$ be subtracted centrally image partition regular matrix near zero over $S$ and a Milliken-Taylor matrix near zero respectively. Then $\left(\begin{matrix} A & 0 \\ 0 & M \end{matrix}\right)\in \mathcal{M}_{0}(S)$.
\end{thm}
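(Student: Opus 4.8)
The plan is to reproduce the architecture of the proof of Theorem~\ref{3.22}, the only difference being that the central block $A_{2}$ is no longer a concrete Milliken--Taylor matrix but merely centrally image partition regular near zero; accordingly the Milliken--Taylor sum that served in Theorem~\ref{3.22} as a distinguished element of $I_{0}(A_{2};S)$ will be replaced by a minimal idempotent. First I would invoke Lemma~\ref{3.17} to reduce the goal to showing that for every minimal left ideal $L$ of $O^{+}(S)$ one has $L\cap I_{0}\big(\left(\begin{smallmatrix} A & 0 \\ 0 & M \end{smallmatrix}\right);S\big)\neq\emptyset$, and then record the observation that for a diagonal sum $I_{0}\big(\left(\begin{smallmatrix} A & 0 \\ 0 & M \end{smallmatrix}\right);S\big)=I_{0}(A;S)\cap I_{0}(M;S)$, so that it suffices to produce an element of $L\cap I_{0}(A;S)\cap I_{0}(M;S)$. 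Using Definition~\ref{3.2} I would write $A=(A_{1}\ A_{2})$, where $A_{1}$ is an $\omega\times v$ matrix whose rows are rows of a finite image partition regular matrix $D$ and $A_{2}$ is centrally image partition regular near zero, and fix a compressed sequence $\vec b=\langle b_{1},\ldots,b_{m}\rangle$ with $b_{1}>0$, $b_{m}>0$ and $M=MT(\vec b)$.

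For the core construction I would pick a minimal idempotent $p\in L$. Since $D$ is finite and image partition regular, $p\in I_{0}(D;S)=I_{0}(A_{1};S)$. Since $A_{2}$ is centrally image partition regular near zero, the argument in the proof of Lemma~\ref{3.18}(a) yields $p\in I_{0}(A_{2};S)$: for $P\in p$ and $\epsilon>0$ the set $P\cap(0,\epsilon)\cap S$ lies in $p$ (because $p\in O^{+}(S)$) and is therefore central near zero, so Definition~\ref{3.1} supplies $\vec x\in S^{\omega}$ with $A_{2}\vec x\in(P\cap(0,\epsilon))^{\omega}$. Next I set $q'=\frac{b_{1}}{b_{m}}\cdot p+\cdots+\frac{b_{m-1}}{b_{m}}\cdot p+p$; by Lemma~\ref{3.11} $q'\in I_{0}(D;S)=I_{0}(A_{1};S)$, by Lemma~\ref{3.12}(2) $q'\in I_{0}(MT(\vec b);S)=I_{0}(M;S)$, and since the partial sum preceding the final $p$ lies in $O^{+}(S)$ and $L=O^{+}(S)+p$ by \cite[Exercise 4.38]{RefHS98}, we get $q'\in L$.

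The descent to $I_{0}(A;S)$ then proceeds through the minimal-left-ideal structure. Since $I_{0}(A_{1};S)=I_{0}(D;S)$ is a compact subsemigroup of $O^{+}(S)$ (Lemma~\ref{2.8}) that meets $L$, the set $L'=L\cap I_{0}(A_{1};S)$ is a minimal left ideal of $I_{0}(A_{1};S)$ by \cite[Theorem 1.65]{RefHS98}; as $p$ is an idempotent in $L'$ we obtain $L'=I_{0}(A_{1};S)+p$. Now $q'\in L'$, so I would write $q'=r+p$ with $r\in I_{0}(A_{1};S)$. Finally, Lemma~\ref{3.13}, applied with $A_{1}$ in the role of the block whose rows come from $D$ and $A_{2}$ in the role of the image-partition-regular-near-zero block, gives $r+p\in I_{0}((A_{1}\ A_{2});S)=I_{0}(A;S)$, whence $q'=r+p\in L\cap I_{0}(A;S)\cap I_{0}(M;S)$, which is exactly what is needed.

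The one genuinely new point, relative to Theorem~\ref{3.22}, is that a single minimal idempotent $p$ must simultaneously witness membership in $I_{0}(A_{1};S)$ and in $I_{0}(A_{2};S)$ while also generating both $L$ and $L'$; this rests on the fact, extracted from Lemma~\ref{3.18}(a), that every minimal idempotent of $O^{+}(S)$ lies in $I_{0}(A_{2};S)$ whenever $A_{2}$ is centrally image partition regular near zero. Once that is in hand, substituting the idempotent $p$ for the Milliken--Taylor sum of Theorem~\ref{3.22} turns every remaining step into a transcription of that earlier proof, so I expect no serious obstacle beyond carefully matching each matrix to its role in Lemma~\ref{3.13}.
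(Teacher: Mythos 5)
Your proposal is correct and follows essentially the same route as the paper: the same decomposition $A=(A_1\ A_2)$, the same Milliken--Taylor sum $q'$ built from a minimal idempotent $p\in L$, and the same appeal to Lemmas \ref{3.11}, \ref{3.12} and \ref{3.13}, with $p\in I_0(A_2;S)$ coming from central image partition regularity near zero. The only (immaterial) difference is that the paper concludes via $q=q+p$ directly from minimality of $L$, whereas you pass through the minimal left ideal $L'=I_0(A_1;S)+p$ to write $q'=r+p$; both satisfy the hypotheses of Lemma \ref{3.13} in the same way.
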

\begin{proof}
Let $k\in \N$ and let $\vec{a}=<a_{1},a_{2},\ldots,a_{k}>$ be a compressed sequence in $\Z\setminus \{0\}$ with $a_{1}>0$, $a_{k}>0$ such that $M=MT(\vec{a})$. Pick $u,v\in \N$, a $u\times v$ image partition regular matrix $D$, an $\omega\times v$ matrix $A_{1}$ whose rows are all rows of $D$ and an centrally image partition regular matrix near zero over $S$, $A_{2}$ such that $A=(A_{1} \ A_{2})$. 
Let $p$ be a minimal left ideal of $O^{+}(S)$ and let $p$ be an idempotent in $L$. Since $D$ is finite image partition regular, $p\in I_{0}(D;S)$. Note by \cite[Exercise 4.3.8]{RefHS98}, $L=O^{+}(S)+p$. Let $q=\frac{a_{1}}{a_{k}}\cdot p+\frac{a_{2}}{a_{k}}\cdot p+\cdots+\frac{a_{k-1}}{a_{k}}\cdot p+p$. Then $q\in L$. By Lemma \ref{3.11}, $q\in I_{0}(D;S)=I_{0}(A_{1};S)$. By Lemma \ref{3.12}, $q\in I_{0}(M;S)$. Since $p$ is an idempotent in $L$ and $L$ is minimal, $q=q+p$. Since $A_{2}$ is centrally image partition regular near zero over $S$, $p\in I_{0}(A_{2};S)$. By Lemma \ref{3.13}, $q\in I_{0}(A;S)$ so $L\cap I_{0}(A;S) \cap I_{0}(M;S)$.
\end{proof}
We now introduce two more classes of infinite image partition regular matrices near zero.
\begin{defn}\label{3.24}
Let $S$ be a dense subsemigroup of $((0,\infty),+)$.
\\ (a) $\mathcal{N}_{0}(S)=\{M: M$ is an admissible matrix and for any finite image partition regular matrix $D$, $I(D;S) \cap I(M;S)\cap K(O^{+}(S))\neq \emptyset \}$.
\\ (b) $\mathcal{R}_{0}(S)=\{M: M$ is an admissible matrix and for any finite image partition regular matrix $D$ and left ideal $L$ of $I_{0}(D;S)$,  $L\cap I_{0}(M;S)\neq \emptyset \}$.
\end{defn}
\begin{lem}\label{3.25}
Let $S$ be a dense subsemigroup of $((0,\infty),+)$ such that for each $c\in \N$, $cS$ is an $IP^{*}$ set near zero. Then $\mathcal{R}_{0}(S)\subseteq \mathcal{N}_{0}(S)$.
\end{lem}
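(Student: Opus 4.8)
The plan is to unwind both class definitions and reduce the containment to a statement about a single minimal left ideal. Fix $M\in\mathcal{R}_{0}(S)$; in particular $M$ is admissible, which is the first requirement for membership in $\mathcal{N}_{0}(S)$. Let $D$ be an arbitrary finite image partition regular matrix. By Remark \ref{2.7}, $I_{0}(D;S)=I(D;S)\cap O^{+}(S)$ and $I_{0}(M;S)=I(M;S)\cap O^{+}(S)$, and since $K(O^{+}(S))\subseteq O^{+}(S)$, every point of $I(D;S)\cap I(M;S)\cap K(O^{+}(S))$ automatically lies in $I_{0}(D;S)\cap I_{0}(M;S)$. Thus it suffices to produce, for each such $D$, a point $q\in I_{0}(D;S)\cap I_{0}(M;S)\cap K(O^{+}(S))$.

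To build such a $q$, I would choose a minimal left ideal $L$ of the compact right topological semigroup $O^{+}(S)$ and show that $L'=L\cap I_{0}(D;S)$ is a left ideal of $I_{0}(D;S)$. Nonemptiness is the substantive point: pick a minimal idempotent $p\in L$; because each member of $p$ is central near zero and $D$ is finite image partition regular, Theorem \ref{2.14} yields, for every $P\in p$ and every $\epsilon>0$ (using $P\cap(0,\epsilon)\in p$ as $p\in O^{+}(S)$), a vector $\vec{x}\in S^{v}$ with $D\vec{x}\in(P\cap(0,\epsilon))^{v}$, so that $p\in L\cap I_{0}(D;S)$. The ideal property is then routine: $I_{0}(D;S)$ is a semigroup by Lemma \ref{2.8}(b) (Theorem \ref{2.12} makes $D$ image partition regular near zero over $S$), and $I_{0}(D;S)\subseteq O^{+}(S)$, so for $r\in I_{0}(D;S)$ and $s\in L'$ we get $r+s\in I_{0}(D;S)$ and $r+s\in O^{+}(S)+L\subseteq L$, whence $r+s\in L'$.

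Finally I would invoke $M\in\mathcal{R}_{0}(S)$ with the matrix $D$ and the left ideal $L'$ of $I_{0}(D;S)$ just constructed: the definition guarantees $L'\cap I_{0}(M;S)\neq\emptyset$. Any $q$ in this intersection lies in $I_{0}(D;S)\cap I_{0}(M;S)$ and in $L\subseteq K(O^{+}(S))$, hence in $I(D;S)\cap I(M;S)\cap K(O^{+}(S))$, which is therefore nonempty. As $D$ was arbitrary, $M\in\mathcal{N}_{0}(S)$, giving $\mathcal{R}_{0}(S)\subseteq\mathcal{N}_{0}(S)$. The main obstacle is the nonemptiness of $L'$: it is exactly here that the hypothesis that each $cS$ is $IP^{*}$ near zero is used, since an $IP^{*}$ set near zero is a fortiori $central^{*}$ near zero, which is the precise assumption under which Theorem \ref{2.14} applies and places the minimal idempotent $p$ into $I_{0}(D;S)$.
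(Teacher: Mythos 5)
Your proposal is correct and follows the same overall strategy as the paper: produce a left ideal of $I_{0}(D;S)$ sitting inside $K(O^{+}(S))$, feed it to the definition of $\mathcal{R}_{0}(S)$, and observe that any point of the resulting nonempty intersection witnesses membership in $\mathcal{N}_{0}(S)$. The one genuine difference is the direction in which that left ideal is built. The paper starts from a minimal left ideal $L$ of $I_{0}(D;S)$ and lifts it to a minimal left ideal $L'$ of $O^{+}(S)$ with $L'\cap I_{0}(D;S)=L$ via \cite[Theorem 1.65]{RefHS98}, which is what places $L$ inside $K(O^{+}(S))$. You instead start from a minimal left ideal $L$ of $O^{+}(S)$ and intersect down, verifying by hand that $L\cap I_{0}(D;S)$ is a nonempty left ideal of $I_{0}(D;S)$; containment in $K(O^{+}(S))$ is then automatic. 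Your route is slightly more self-contained, since it replaces the appeal to the structure theorem for minimal left ideals of subsemigroups by the elementary check that $I_{0}(D;S)+\bigl(L\cap I_{0}(D;S)\bigr)\subseteq L\cap I_{0}(D;S)$. In both arguments the substantive point is the same --- a minimal idempotent of $O^{+}(S)$ must lie in $I_{0}(D;S)$ --- and your justification of it (every member of such an idempotent is central near zero, so Theorem \ref{2.14} applies because $IP^{*}$ near zero implies $central^{*}$ near zero) is in fact more precise than the paper's, which attributes this step to Theorem \ref{2.13}. One cosmetic caveat: the left ideal you hand to the definition of $\mathcal{R}_{0}(S)$ need not be minimal in $I_{0}(D;S)$, but Definition \ref{3.24}(b) quantifies over all left ideals, so nothing is lost.
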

\begin{proof}
Let $M\in \mathcal{M}_{0}(S)$ and let $D$ be a finite image partition regular matrix. Pick a minimal left ideal of $I_{0}(D;S)$. By Theorem \ref{2.13}, all minimal idempotents of $O^{+}(S)$ are in $I_{0}(D;S)$ so by \cite[Theorem 1.65]{RefHS98} pick a minimal left ideal $L'$ of $O^{+}(S)$ such that $L'\cap O^{+}(S)=L$ and in particular $L\subseteq K(O^{+}(S))$. Thus $\emptyset \neq L\cap I_{0}(M;S)=L'\cap I_{0}(D;S) \cap I_{0}(M;S) \subseteq I(D;S)\cap I(M;S)\cap K(O^{+}(S))$.
\end{proof}
\begin{lem}\label{3.26}
Let $S$ be a dense subsemigroup of $((0,\infty),+)$.
\\ (a) All centrally image partition regular matrices near zero over $S$ are in $\mathcal{R}_{0}(S)$ (and thus in $\mathcal{N}_{0}(S))$.
\\ (b) If for each $c\in \N$, $cS$ is an $IP^{*}$ set near zero in $S$, then all Milliken-Taylor matrices near zero are in $\mathcal{R}_{0}(S)$ (and thus in $\mathcal{N}_{0}(S))$.
\end{lem}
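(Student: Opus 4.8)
The plan is to treat both parts inside the smallest ideal of the compact right topological semigroup $I_{0}(D;S)$. This object is available because, for a finite image partition regular matrix $D$, Theorem \ref{2.13} makes $D$ image partition regular near zero over $S$, so by Lemma \ref{2.8} the set $I_{0}(D;S)$ is a nonempty compact subsemigroup of $\beta S$; being compact in the Hausdorff space $\beta S$ it is closed, hence a compact right topological semigroup in its own right. The common first step is the fact used in the proof of Lemma \ref{3.25}: by Theorem \ref{2.13} all minimal idempotents of $O^{+}(S)$ lie in $I_{0}(D;S)$, so $K(O^{+}(S))\cap I_{0}(D;S)\neq\emptyset$. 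Thus \cite[Theorem 1.65]{RefHS98} applies to the closed subsemigroup $I_{0}(D;S)$ of $O^{+}(S)$ and yields $K(I_{0}(D;S))=K(O^{+}(S))\cap I_{0}(D;S)$, realizes each minimal left ideal of $I_{0}(D;S)$ as $L'\cap I_{0}(D;S)$ for a minimal left ideal $L'$ of $O^{+}(S)$, and in particular forces every idempotent of $K(I_{0}(D;S))$ to be a minimal idempotent of $O^{+}(S)$. This is the device that converts an abstract idempotent of $I_{0}(D;S)$ into a genuine minimal idempotent of $O^{+}(S)$, which is what both constructions require.

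For (a), I would fix a finite image partition regular matrix $D$ and a left ideal $L$ of $I_{0}(D;S)$. Since $I_{0}(D;S)$ is compact right topological, $L$ contains a minimal left ideal $L_{0}$ of $I_{0}(D;S)$, and $L_{0}$ contains an idempotent $p$; by the first step $p$ is a minimal idempotent of $O^{+}(S)$, so every member of $p$ is central near zero in $S$. As $A$ is centrally image partition regular near zero over $S$, the argument of Lemma \ref{3.18}(a) gives $p\in I_{0}(A;S)$, whence $p\in L_{0}\cap I_{0}(A;S)\subseteq L\cap I_{0}(A;S)$ and $A\in\mathcal{R}_{0}(S)$. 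For the parenthetical claim, note $p\in K(O^{+}(S))$ while $p\in I_{0}(D;S)\subseteq I(D;S)$ and $p\in I_{0}(A;S)\subseteq I(A;S)$ by Remark \ref{2.7}; hence $p\in I(D;S)\cap I(A;S)\cap K(O^{+}(S))$, so $A\in\mathcal{N}_{0}(S)$ directly, which is fortunate since part (a) assumes no $IP^{*}$ hypothesis.

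For (b), I would write $A=MT(\vec{a})$ for a compressed sequence $\vec{a}=\langle a_{1},\ldots,a_{k}\rangle$ in $\Z\setminus\{0\}$ with $a_{1}>0$ and $a_{k}>0$, fix $D$ and a left ideal $L$ of $I_{0}(D;S)$, and extract as above a minimal left ideal $L_{0}\subseteq L$ with a minimal idempotent $p\in L_{0}$ that is minimal in $O^{+}(S)$. Set $q=\frac{a_{1}}{a_{k}}\cdot p+\frac{a_{2}}{a_{k}}\cdot p+\cdots+\frac{a_{k-1}}{a_{k}}\cdot p+p$. Lemma \ref{3.11}, whose $IP^{*}$ hypothesis is exactly that of (b), gives $q\in I_{0}(D;S)$; since $p$ is idempotent, $q+p=q$, so $q\in I_{0}(D;S)+p\subseteq L_{0}\subseteq L$, the first inclusion holding because $p\in L_{0}$ and $L_{0}$ is a left ideal of $I_{0}(D;S)$. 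On the other hand Lemma \ref{3.12}(2) gives $q\in I_{0}(MT(\vec{a});S)=I_{0}(A;S)$. Hence $q\in L\cap I_{0}(A;S)$ and $A\in\mathcal{R}_{0}(S)$, and Lemma \ref{3.25} then gives $A\in\mathcal{N}_{0}(S)$.

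I expect the decisive point to be the structural bridge of the first paragraph, namely that a minimal left ideal of $I_{0}(D;S)$ meets $K(O^{+}(S))$ in a minimal idempotent of the ambient semigroup; everything after that is bookkeeping. Granting it, part (a) is immediate from the defining property of $A$ via Lemma \ref{3.18}(a), and the only genuine computation in part (b) is the identity $q=q+p$, which simultaneously places the Milliken--Taylor idempotent sum in the prescribed minimal left ideal (through Lemma \ref{3.11}) and in $I_{0}(MT(\vec{a});S)$ (through Lemma \ref{3.12}).
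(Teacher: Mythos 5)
Your proposal is correct and follows essentially the same route as the paper: reduce to a minimal left ideal of $I_{0}(D;S)$, use \cite[Theorem 1.65]{RefHS98} to identify it with the trace of a minimal left ideal of $O^{+}(S)$ and extract a minimal idempotent $p$, which handles (a) by centrality, and for (b) form $q=\frac{a_{1}}{a_{k}}\cdot p+\cdots+\frac{a_{k-1}}{a_{k}}\cdot p+p$ and apply Lemmas \ref{3.11} and \ref{3.12}. The only (minor, harmless) differences are that you keep the argument inside $I_{0}(D;S)$ via $q=q+p$ where the paper works in $L'=O^{+}(S)+p$, and you note that (a) gives membership in $\mathcal{N}_{0}(S)$ directly without the $IP^{*}$ hypothesis of Lemma \ref{3.25}.
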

\begin{proof}
Let $D$ be a finite image partition regular matrix and let $L$ be a left ideal of $I_{0}(D;S)$. We may assume that $L$ is minimal in $I_{0}(D;S)$. By \cite[Theorem 1.65]{RefHS98}
 pick a minimal left ideal $L'$ of $O^{+}(S)$ such that $L'\cap I_{0}(D;S)=L$. 
 \\ Pick an idempotent $p\in L'$. If $M$ is centrally image partition regular near zero over $S$, then we have that $p\in L\cap I_{0}(M;S)$. 
\\ Assume we have $k\in \N$ and a compressed sequence $\vec{a}=<a_{1},a_{2},\ldots,a_{k}>$ in $\Z\setminus \{0\}$ with $a_{1}>0$, $a_{k}>0$ such that $M=MT(\vec{a})$. Let $q=\frac{a_{1}}{a_{k}}\cdot p+\frac{a_{2}}{a_{k}}\cdot p+\cdots+\frac{a_{k-1}}{a_{k}}\cdot p+p$. Since $L'=O^{+}(S)+p$, $q\in L'$ and by Lemma \ref{3.11}, $q\in I_{0}(D;S)$ and by Lemma \ref{3.12}, $q\in I_{0}(M;S)$. Therefore $q\in L\cap I_{0}(M;S)$.
\end{proof}
\begin{thm}\label{3.27}
Let $S$ be a dense subsemigroup of $((0,\infty),+)$. Let $u,v\in \N$ and let $D$ be a $u\times v$ image partition regular matrix. Let $\alpha$ and $\delta$ be positive ordinals and let $A$ be an $\alpha \times v$ matrix, whose rows are the rows of $D$. 
Let $B$ be an $\alpha \times \delta$ member of $\mathcal{R}_{0}(S)$ and let $M\in \mathcal{N}_{0}(S)$ then $\left(\begin{matrix} A & B & 0 \\ 0 & 0 & M \end{matrix}\right)\in \mathcal{N}_{0}(S)$.
\end{thm}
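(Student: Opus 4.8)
The plan is to verify membership in $\mathcal{N}_{0}(S)$ straight from its definition (Definition \ref{3.24}). So I fix an arbitrary finite image partition regular matrix $E$ and aim to produce a single ultrafilter sitting in $I(E;S)\cap I(C;S)\cap K(O^{+}(S))$, where I write $C=\left(\begin{matrix} A & B & 0 \\ 0 & 0 & M\end{matrix}\right)$. The first reduction is to observe that $C$ is the diagonal sum of $N=(A\ B)$ and $M$ acting on disjoint blocks of variables, so a common witnessing ultrafilter for $N$ and $M$ is automatically a witness for $C$; concretely $I_0(N;S)\cap I_0(M;S)\subseteq I_0(C;S)$, since one merely concatenates the two witnessing vectors. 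By Remark \ref{2.7} it therefore suffices to find $p\in I_0(N;S)\cap I_0(M;S)\cap I_0(E;S)\cap K(O^{+}(S))$.

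To reconcile the arbitrary $E$ with the rigid structure of the $A$-columns, I would pass to the diagonal sum $D^{*}=\left(\begin{matrix} D & 0 \\ 0 & E\end{matrix}\right)$, which is again finite image partition regular and satisfies $I_0(D^{*};S)=I_0(D;S)\cap I_0(E;S)\subseteq I_0(A;S)$, the last inclusion holding because every row of $A$ is a row of $D$. Using $M\in\mathcal{N}_{0}(S)$ with the test matrix $D^{*}$, I pick $q'\in I_0(D^{*};S)\cap I_0(M;S)\cap K(O^{+}(S))$ and let $L$ be the minimal left ideal of $O^{+}(S)$ containing $q'$. Since $I_0(D^{*};S)$ is a compact subsemigroup of $O^{+}(S)$ by Lemma \ref{2.8} and it meets $L$ (as $q'\in L\cap I_0(D^{*};S)$), \cite[Theorem 1.65]{RefHS98} tells me that $L'=L\cap I_0(D^{*};S)$ is a minimal left ideal of $I_0(D^{*};S)$, and clearly $q'\in L'$.

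Now I invoke $B\in\mathcal{R}_{0}(S)$ with the same finite matrix $D^{*}$ and its left ideal $L'$ to obtain $q\in L'\cap I_0(B;S)$. Because $L'$ is a minimal left ideal of $I_0(D^{*};S)$ containing $q$, we have $L'=I_0(D^{*};S)+q$, so $q'=r+q$ for some $r\in I_0(D^{*};S)\subseteq I_0(A;S)$. With $r\in I_0(A;S)$ and $q\in I_0(B;S)$ (the latter also shows $B$ is image partition regular near zero, so that the hypotheses of Lemma \ref{3.13} are met), Lemma \ref{3.13} yields $q'=r+q\in I_0((A\ B);S)=I_0(N;S)$. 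Since already $q'\in I_0(M;S)$, the reduction of the first paragraph gives $q'\in I_0(C;S)$; and as $q'\in I_0(D^{*};S)\subseteq I_0(E;S)$ and $q'\in K(O^{+}(S))$, Remark \ref{2.7} finally places $q'$ in $I(E;S)\cap I(C;S)\cap K(O^{+}(S))$. Since $C$ is admissible and $E$ was arbitrary, this proves $C\in\mathcal{N}_{0}(S)$.

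The crux — and the reason for introducing $D^{*}$ — is to force all three requirements onto a single ultrafilter at once: the hypothesis $M\in\mathcal{N}_{0}(S)$ pins $q'$ inside $K(O^{+}(S))$, the hypothesis $B\in\mathcal{R}_{0}(S)$ produces $q\in I_0(B;S)$ lying in a left ideal of $I_0(D^{*};S)$, and Lemma \ref{3.13} then requires the summand $r$ in the decomposition $q'=r+q$ to live in $I_0(A;S)$. The step I expect to be most delicate is ensuring that $q'$ and $q$ share a common minimal left ideal $L'$ of $I_0(D^{*};S)$, which is what makes the decomposition $q'=r+q$ with $r\in I_0(D^{*};S)$ available; this is exactly what \cite[Theorem 1.65]{RefHS98} supplies once $q'$ has been selected within $K(O^{+}(S))$.
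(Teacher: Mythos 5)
Your proof is correct and takes essentially the same route as the paper's: both apply the $\mathcal{N}_{0}$ hypothesis on $M$ to the diagonal sum of $D$ and the test matrix $E$, pass via \cite[Theorem 1.65]{RefHS98} to a minimal left ideal of the corresponding compact subsemigroup, feed that left ideal to the $\mathcal{R}_{0}$ hypothesis on $B$ to decompose the chosen ultrafilter as a sum, and finish with Lemma \ref{3.13} together with the diagonal-sum observation. The only difference is organizational: you fix $E$ at the outset and work inside $I_{0}\left(\left(\begin{matrix} D & 0 \\ 0 & E \end{matrix}\right);S\right)$ throughout, whereas the paper first establishes the containment $I(A;S)\cap I(M;S)\cap K(O^{+}(S))\subseteq I_{0}(N;S)$ and only afterwards brings in $E$.
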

\begin{proof}
Let $q\in I_{0}(A;S)\cap I_{0}(M;S)\cap K(O^{+}(S))$. Let $1=I_{0}(A;S)+q$. Since $B\in \mathcal{R}_{0}(S)$, pick $r\in L\cap I_{0}(B;S)$. Since $q\in K(O^{+}(S))\cap I(A;S)=K(I_{0}(A;S))$ (by \cite[Theorem 1.65] {RefHS98}). $L$ is a minimal left ideal of $I_{0}(A;S)$, so $L=L+r$. 
Pick $s\in L$ such that $q=s+r$. By Lemma \ref{3.13}, $q\in I_{0}(C;S)$ where $C=(A \ B)$ . So $q\in I_{0}(N;S)$ where $N=\left(\begin{matrix} A & B & 0 \\ 0 & 0 & M \end{matrix}\right)$. Thus $I(A;S)\cap I(M;S)\cap K(O^{+}(S))\subseteq I_{0}(N;S)$. To see that $N\in \mathcal{N}_{0}(S)$, let $E$ be a finite image partition regular matrix. 
We need to show that $I(E)\cap I(N)\cap K(O^{+}(S))\neq \emptyset$. Let $G=\left(\begin{matrix} A & 0 \\ 0 & E \end{matrix}\right)$, then the rows of $G$ are the rows of a finite image partition regular matrix. Since $M\in \mathcal{N}_{0}(S) $, $I(G;S)\cap I(M;S)\cap K(O^{+}(S))\neq \emptyset $. Also $I(G;S)\cap I(M;S)\cap K(O^{+}(S))\subseteq I(A;S)\cap I(M;S)\cap K(O^{+}(S)) \subseteq I_{0}(N;S)$. Therefore $\emptyset\neq I(F;S)\cap I(M;S)\cap K(O^{+}(S))\subseteq I(E)\cap I(N)\cap K(O^{+}(S))$ and we are done.
\end{proof}
As an consequence of the above theorem, we have following corollary:
\begin{cor}\label{3.28}
Let $S$ be a dense subsemigroup of $((0,\infty),+)$. Let $m\in \N$ and for $i\in \{1,2,\ldots,m\}$ let $M_{i}\in \mathcal{R}_{0}(S)$, let $A_{i}$ be a matrix with rows indexed by the same positive ordinals as the rows of $M_{i}$, and assume that the rows of $A_{i}$ are the rows of a finite image partition regular matrix. Let $M_{m+1}\in \mathcal{N}_{0}(S)$. 
Then $
\left(\begin{matrix} A_{1} & M_{1} & 0 & 0 \cdots   0 & 0 & 0 \\
                     0 & 0 & A_{2} & M_{2}  \cdots  0 & 0 & 0 \\
                     \vdots & \vdots & \vdots & \vdots \ \ \ \ \   \vdots & \vdots & \vdots \\
                     0 & 0 & 0 & 0 \cdots  A_{m} & M_{m} & 0 \\
                     0 & 0 & 0 & 0 \cdots  0 & 0 & M_{m+1}
\end{matrix}\right)
                                       \in \mathcal{N}_{0}(S)$.
\end{cor}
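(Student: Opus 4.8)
The plan is to argue by induction on $m$, using Theorem \ref{3.27} both to start the induction and to carry out each step. For $m=1$ the matrix in the statement is exactly $\left(\begin{matrix} A_{1} & M_{1} & 0 \\ 0 & 0 & M_{2} \end{matrix}\right)$, and since the rows of $A_1$ are rows of a finite image partition regular matrix, $M_1\in\mathcal{R}_0(S)$, and $M_2\in\mathcal{N}_0(S)$, this is precisely the conclusion of Theorem \ref{3.27} applied with $A=A_1$, $B=M_1$, and $M=M_2$. Hence the base case belongs to $\mathcal{N}_0(S)$.

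For the inductive step I would assume the assertion for the parameter $m-1$ and then regroup the blocks. Let $N'$ denote the matrix obtained from the displayed matrix by deleting its first row-block together with the two column-groups occupied by $A_1$ and $M_1$; explicitly $N'$ is the $m$-block matrix built from $A_2,\ldots,A_m$, $M_2,\ldots,M_m\in\mathcal{R}_0(S)$ and $M_{m+1}\in\mathcal{N}_0(S)$, which is exactly an instance of the corollary with parameter $m-1$. By the inductive hypothesis, $N'\in\mathcal{N}_0(S)$.

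The full matrix may now be written in the block form $\left(\begin{matrix} A_{1} & M_{1} & 0 \\ 0 & 0 & N' \end{matrix}\right)$, in which the upper-right zero block spans all columns of $N'$ and the lower-left zero block spans the columns of $A_1$ and $M_1$. Since the rows of $A_1$ are rows of a finite image partition regular matrix, $M_1\in\mathcal{R}_0(S)$, and $N'\in\mathcal{N}_0(S)$ by the previous step, a single further application of Theorem \ref{3.27} (with $A=A_1$, $B=M_1$, $M=N'$) shows that this matrix lies in $\mathcal{N}_0(S)$, completing the induction.

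The main point demanding attention is purely organizational: one must check that regrouping the lower $m$ blocks into $N'$ preserves the two structural hypotheses of Theorem \ref{3.27} at every stage, namely that each $A_i$ shares its row ordinals with $M_i$, so that the pair $(A_i\ M_i)$ has a well-defined common row index, and that the zero blocks are positioned so the matrix really has the shape $\left(\begin{matrix} A & B & 0 \\ 0 & 0 & M \end{matrix}\right)$. No genuinely new difficulty arises, since the substantive argument, locating an idempotent in the appropriate minimal left ideal of $I_0(A;S)$ inside $K(O^{+}(S))$ via Lemma \ref{3.13}, is entirely contained in Theorem \ref{3.27}.
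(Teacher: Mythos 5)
Your proposal is correct and matches the paper's proof, which consists solely of the instruction ``Apply Theorem \ref{3.27} $m$-times''; your induction on $m$, peeling off the top block pair $(A_1\ M_1)$ and absorbing the remaining blocks into a single member $N'$ of $\mathcal{N}_0(S)$, is precisely the intended iteration spelled out in detail.
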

\begin{proof}
Apply Theorem \ref{3.27} $m$-times.
\end{proof}

\end{document}